\tikzset{pullback/.style={minimum size=1.2ex,path picture={
\draw[opacity=1,black,-,#1] (-0.5ex,-0.5ex) -- (0.5ex,-0.5ex) -- (0.5ex,0.5ex);%
}}}
\theoremstyle{plain}
\newtheorem{theorem}{Theorem}[subsection]
\newtheorem{proposition}[theorem]{Proposition}
\newtheorem{lemma}[theorem]{Lemma}
\newtheorem{claim}{Claim}[theorem]
\crefname{claim}{Claim}{Claims}
\theoremstyle{definition}
\newtheorem{example}[theorem]{Example}
\newtheorem{definition}[theorem]{Definition}
\newtheorem{remark}[theorem]{Remark}
\newtheorem{notation}[theorem]{Notation}
\newtheorem{puzzle}[theorem]{Puzzle}
\newcommand{\dq}[1]{``#1"}
\newcommand{\invmemo}[1]{}
\newcommand{\Z}{\mathbb{Z}}
\newcommand{\R}{\mathbb{R}}
\newcommand{\E}{\mathcal{E}}
\newcommand{\F}{\mathbf{F}}
\renewcommand{\L}{\mathcal{L}}
\newcommand{\id}{\mathrm{id}}
\newcommand{\op}{\mathrm{op}}
\newcommand{\Set}{\mathbf{Set}}
\newcommand{\sSet}{\mathbf{sSet}}
\newcommand{\PSh}{\mathbf{PSh}}
\newcommand{\demph}[1]{\textit{#1}}
\newcommand{\Pow}{\mathcal{P}}
\newcommand{\Graph}{\mathrm{Graph}}
\newcommand{\Eq}{\mathrm{Eq}}
\newcommand{\sk}{\mathrm{sk}}
\newcommand{\cosk}{\mathrm{cosk}}
\newcommand{\uP}{\overline{P}}
\newcommand{\dP}{\underline{P}}
\newcommand{\mass}{\mathrm{mass}}
\newcommand{\uG}{\overline{G}}
\newcommand{\CrefIneq}{Inequalities \ref{eq:all_ineq}}
\newcommand{\Image}{\mathrm{Im}}
\title{Lawvere's fourth open problem:\\ Levels in the topos of symmetric simplicial sets}
\author{Ryuya Hora}
\address{Graduate School of Mathematical Sciences, University of Tokyo, Tokyo, Japan}
\email{hora@ms.u-tokyo}
\author{Yuhi Kamio}
\address{Graduate School of Mathematical Sciences, University of Tokyo, Tokyo, Japan}
\email{emirp13@g.ecc.u-tokyo.ac.jp.}
\author{Yuki Maehara}
\address{Research Institute for Mathematical Sciences, Kyoto University, Kyoto, Japan}
\email{ymaehar@kurims.kyoto-u.ac.jp}
\date{\today}
\subjclass[2020]{18F10, 18B25}
\keywords{Topos, Aufhebung, level, symmetric simplicial sets, the Boolean algebra classifier, graph}
\begin{document}
\begin{abstract}
    In the topos of simplicial sets, it makes sense to ask the following question about a given natural number $n$: \emph{what is the minimum value $m$ such that $n$-skeletality implies $m$-coskeletality?}
    This is an instance of the \emph{Aufhebung} relation in the sense of Lawvere, who introduced this notion for an arbitrary Grothendieck topos $\E$ in place of $\sSet$, and levels/essential subtopoi in place of dimensions.

    We compute this Aufhebung relation for the topos of symmetric simplicial sets.
    In particular, we show that it is given by $2l-1$ for the level labelled by $l\geq 3$, which coincides with the previously known case of simplicial sets.
    This result provides a solution to the fourth of the seven open problems in topos theory posed by Lawvere in 2009.
\end{abstract}
\maketitle

\tableofcontents

\section{Introduction}

\subsection{Levels and Aufhebung in a topos}
As outlined in \cite{lawvere2006some},
the concept of dimension in a \dq{topos of spaces} may be captured by a special class of its subtopoi called \demph{levels}. A level of a Grothendieck topos $\E$ is an adjoint triple $l_! \dashv l^* \dashv l_* \colon \L \to \E$ with fully faithful $l_!$ and $l_*$. In other words, a level of $\E$ is an essential subtopos of $\E$.
In \cite{kelly1989complete}, Kelly and Lawvere showed that the levels of a fixed Grothendieck topos form a complete lattice under the natural ordering of subtopoi. 
In the case of simplicial sets $\E=\sSet$, the notion of levels coincides with the usual notion of dimensions $-\infty < 0<1< \dots < \infty$. 
The familiar notion of $n$-skeletality can be extended to an arbitrary level $l_! \dashv l^* \dashv l_* \colon \L \to \E$, where we call an object $X$ \demph{$l$-skeletal} if the counit $l_!  l^* X \to X$ is an isomorphism. Similarly, an object $X$ is \demph{$l$-coskeletal} if the unit $X \to l^* l_* X$ is an isomorphism.


This paper is primarily concerned with the \demph{way above} relation between levels.
Given two levels $l_1\geq l_0$ of a topos $\E$, we say that $l_1$ is way above $l_0$ if every $l_0$-skeletal object is $l_1$-coskeletal.
The \demph{Aufhebung} of a level $l$, if it exists, is the minimum level way above $l$. The Aufhebung of levels has been computed for some topoi, including graphic topoi \cite{lawvere1989display,lawvere1991more}, the topos of ball complexes \cite{roy1997topos}, and the topoi of simplicial and cubical sets \cite{zaks1986does, kennett2011levels}.

The purpose of this paper is to compute the Aufhebung for the presheaf topos $\PSh(\F)$, where $\F$ is the category of non-empty finite sets, which has long been an open problem 
in topos theory
as we now review.

\subsection{The open problem and its history}

The topos $\PSh(\F)$ has attracted interest under two contrasting names that reflect its logical and geometric aspects respectively: \demph{the Boolean algebra classifier/the classifier of non-trivial Boolean algebras} \cite{lawvere1988toposesGenerated, roy1997topos, lawvere2025open, menni2019monic, menni2024successive}, a name derived from categorical logic, and the category of \demph{symmetric simplicial sets/ensembles simpliciaux sym\'{e}triques} \cite{grandis2001finite, rosicky2003left, cisinski2006prefaisceaux, roberts2009theory, hackney2025partial}, a term rooted in combinatorial topology.
Being two aspects of the same topos, of course these notions are interconnected;
their relationship is described in \cite{lawvere1988toposesGenerated} through a non-trivial Boolean algebra structure on the infinite-dimensional sphere.


The Aufhebung of this topos $\PSh(\F)$ has been of interest at least since 1988 \cite{lawvere1988toposesGenerated, roy1997topos, menni2019monic, menni2024successive} from the viewpoint of dimension theory.
Its facet as a problem concerning \dq{complexity of automata} was pointed out in \cite{lawvere2004functorial}.
This paper adopts the latter viewpoint, but instead of automata, we will think of it as a problem concerning complexity of a variant of Joyal’s \demph{combinatorial species} \cite{joyal1981theorie}.
Accordingly, the notions of skeletality and coskeletality are interpreted as follows.
\begin{itemize}
    \item Skeletality of $M$ measures the extent to which an arbitrary $M$-structure may be recovered from small \demph{quotient structures}. (See \Cref{fig:EZ-decomposition}.) 
    \item Coskeletality of $M$ measures the extent to which an arbitrary $M$-structure may be recovered from small \demph{substructures}.  (See \Cref{fig:CycleFilling}.) 
\end{itemize}
Thus, the Aufhebung is the quantitative interplay between these dual complexity measures.

In 2009, Lawvere re-emphasised the significance of the Aufhebung of this topos by including it as Problem 4 in his list of seven open problems in topos theory:
\begin{quote}
    \cite{lawvere2025open}
[...] There is another fundamental topos related to classical constructions
and combinatorial topology, namely the Boolean algebra classifier that consists of presheaves on the category of finite non-empty sets. 
[...]
What is, in combinatorial or number-theoretic terms, the way below relation for this basic topos?
\end{quote}
For more explanations on related topics, see the early article \cite{lawvere1988toposesGenerated}, the recent paper \cite{menni2024successive}, and the papers cited therein.

\subsection{Contribution}
The present paper proves that, in the topos $\PSh(\F)$,
the Aufhebung of $l$ is given by 
\[
\begin{cases}
    0 &(l= -\infty )\\
    1 &(l= 0)\\
    2 &(l= 1 )\\
    4 &(l= 2)\\
    2l-1 &(l\geq 3)\\
    \infty & (l= \infty),
\end{cases}
\]
which coincides with the case of simplicial sets proven in \cite{kennett2011levels} for $l\geq 3$. 
In fact, our proof, and in particular the way we utilise the Pigeonhole Principle, is inspired by this simplicial counterpart too. 

A notable difference between our approach and that in \cite{kennett2011levels} is that we frame some of the combinatorics as a purely graph-theoretic phenomenon (\Cref{prop:GraphCalculation}), which provides a conceptual insight into why the Aufhebung doubles the dimension.

In fact, this use of graph theory is our response to the following phenomenon, which forced us to pay close attention to relationships between points in the underlying set $A$ of a given $M$-structure $x \in M(A)$: when proving coskeletality of the (suitably skeletal) symmetric simplicial set $M$, we need \emph{two} substructures to recover the given $M$-structure (see the construction of $f$ in \Cref{lem:fillingEdge}) whereas only \emph{one} was needed in the simplicial case (see \cite[Proposition 3.14]{kennett2011levels} where the filler is simply given by $c_m\sigma^m$).
The reason for this contrast can be traced back to the fact that, given a morphism $\alpha \colon [k] \to [n]$ in the simplex category $\Delta$ and $m \in [k]$, we have
\[
m = \min\Bigl\{ i \in [k] \mid \alpha^{-1}\bigl(\alpha(i)\bigr) \neq \{i\}\Bigr\}
\implies
\alpha(m) = \alpha(m+1)
\]
while we have no such control over functions in $\F$.


\subsection*{Acknowledgement}
We would like to thank Matias Menni for his encouraging advice on this problem.
We are also grateful to the Kanda Satellite Lab at the National Institute of Informatics for providing a space for research discussions.

The first-named author would like to thank his supervisor Ryu Hasegawa for helpful discussions and suggestions. 
He was supported by JSPS KAKENHI Grant Number JP24KJ0837 and FoPM, WINGS Program, the University of Tokyo.

The third-named author gratefully acknowledges the support of JSPS Research Fellowship for Young Scientists and JSPS KAKENHI Grant Number JP23KJ1365.

\section{Symmetric sets}
The main result of this paper concerns the \demph{Aufhebung} of the topos of \demph{symmetric sets}.
The primary purpose of this section is to recall these two notions.
We also construct a family of rather simple symmetric sets which provides a (sharp) lower bound for the Aufhebung.

\subsection{Definition and examples of symmetric sets}

\begin{definition}
    We will write $\F$ for the category of non-empty finite sets and all functions.
    The category of \demph{symmetric (simplicial) sets} and \demph{symmetric (simplicial) maps} is its presheaf category $\PSh(\F) = [\F^\op, \Set]$.
\end{definition}

One can think of symmetric sets either
\begin{itemize}
    \item as a variant of simplicial sets equipped with an action of the symmetric groups, or
    \item as a variant of Joyal's combinatorial species \cite{joyal1981theorie}.
\end{itemize}
Although we will employ some key concepts motivated by the former viewpoint (such as the EZ-decomposition: \Cref{def:EZ-decomposition}), our arguments will be framed according to the latter.
In particular, given a symmetric set $M$ and a non-empty finite set $A$, we will refer to elements of $M(A)$ as \emph{$M$-structures} on $A$.
Each $M$-structure $x \in M(B)$ can be pulled back along an arbitrary function $\alpha \colon A \to B$, and we denote the resulting $M$-structure by $x\alpha \in M(A)$.


Below are some typical examples of symmetric sets.
    \begin{example}[Graphs]\label{exmpl:SymmetricSetOfGraphs}
    The symmetric set of \demph{graphs} is the contravariant functor $\Graph\colon \F^\op \to \Set$ that sends a non-empty finite set $A$ to the set of all undirected graphs whose vertex set is $A$, i.e., the set of all pairs $\bigl(A, E\subset \Pow_2(A)\bigr)$ where $\Pow_2(A)$ is the set of all $2$-element subsets of $A$.
    For a graph $x=\bigl(B, E \subset \Pow_2(B)\bigr) \in \Graph(B)$ and a function $\alpha\colon A \to B$, the pullback $x\alpha \in \Graph(A)$ is defined to be the graph $\bigl(A, \{e\in \Pow_2(A) \mid \alpha(e)\in E\}\bigr)$.
    See \Cref{fig:EZ-decomposition} for instance.
    (In what follows, $\Graph$ will be our default example when visualising concepts around symmetric sets.)
    \end{example}

    \begin{example}[Equivalence relations]\label{exmp:Equiv}
    The symmetric set of \demph{equivalence relations} $\Eq$ is defined by
    \[
    \Eq(A) \coloneqq \{{\sim}\subset A^2\mid {\sim}\text{ is an equivalence relation}\}.
    \]
    A variant of this example will be used to give a lower bound for the Aufhebung (\Cref{lem:LowerBoundcycle}).
    \end{example}

    \begin{example}[Representable symmetric set $=$ colouring]
        For a non-negative integer $k$, we define the symmetric set $\Delta^k$ to be the presheaf $\F(-, S)$ represented by an $(n+1)$-element set $S$. Geometrically, one can think of $\Delta^k$ as a standard $k$-simplex. Combinatorially, a $\Delta^k$-structure on a non-empty finite set $A$ is simply a function $A \to S$, which may be thought of as an $S$-colouring of $A$.
    \end{example}

\subsection{EZ-decomposition and EZ-congruence}


Either by direct combinatorics or using \cite[Proposition 4.2 and Theorem 5.6]{campion2023cubical}, one can verify that the category $\F$ is an \demph{EZ-category}
in the sense of \cite[Definition 6.7]{berger2011extension} (which allows non-trivial automorphisms, unlike e.g.~\cite[Definition 1.3.1]{cisinski2019higher}). 
Thus, we are naturally led to the following notions.

\begin{definition}[EZ-decomposition] \label{def:EZ-decomposition}
    Let $M$ be a symmetric set, $A$ be a non-empty finite set, and $x\in M(A)$ be an $M$-structure.
    \begin{itemize}
        \item A \demph{decomposition} of $x$ is a pair $(\alpha, y)$ consisting of a morphism $\alpha \colon A \to B$ in $\F$ and $y\in M(B)$ such that $x=y\alpha$. 
        \item We say $x$ is \demph{degenerate} if it admits a decomposition $(\alpha,y)$ where $\alpha$ is a non-invertible surjection.
        Otherwise $x$ is said to be \emph{non-degenerate}.
        \item A decomposition $(\alpha, y)$ of $x$ is called an \demph{EZ-decomposition} if $\alpha$ is a surjection and $y$ is non-degenerate.
    \end{itemize}
\end{definition}



\begin{example}
    An EZ-decomposition of a graph (regarded as an $M$-structure on a seven-element set for $M=\Graph$ of \Cref{exmpl:SymmetricSetOfGraphs}) is visualised in \Cref{fig:EZ-decomposition}. (The colours indicate the associated \demph{EZ-congruence}, which will be defined in \Cref{def:EZcongruence} together with the term \emph{mass}.)
\end{example}

\begin{figure}[ht]
    \centering
\begin{tikzpicture}
    \def\r{2} 
    \def\RcircleLeft{2.8} 
    \def\RcircleRight{2} 
    \def\Xshift{10} 

    \def\vertexcolor#1{%
        \ifnum#1=0 red\else%
        \ifnum#1=5 red\else%
        \ifnum#1=1 blue\else%
        \ifnum#1=2 blue\else%
        \ifnum#1=3 green\else%
        \ifnum#1=6 green\else%
        \ifnum#1=4 orange\else%
        black\fi\fi\fi\fi\fi\fi\fi%
    }

    \begin{scope}
        \draw[thick] (0, 0) circle (\RcircleLeft);
        \node[below] at (0,-\RcircleLeft) {$x= y \alpha$};

        \foreach \i in {0,...,6} {
            \node[draw, circle, fill=\vertexcolor{\i}, minimum size=7pt, inner sep=0pt] (v\i) at ({90 + \i * 360 / 7}:\r) {};
        }

        \draw (v0) -- (v1);
        \draw (v0) -- (v2);
        \draw (v0) -- (v3);
        \draw (v0) -- (v4);
        \draw (v0) -- (v6);
        \draw (v1) -- (v5);
        \draw (v2) -- (v5);
        \draw (v3) -- (v4);
        \draw (v3) -- (v5);
        \draw (v4) -- (v5);
        \draw (v4) -- (v6);
        \draw (v5) -- (v6);
    \end{scope}

    \begin{scope}[shift={(\Xshift,0)}] 
        \draw[thick] (0, 0) circle (\RcircleRight);
        \node[below] at (0,-\RcircleRight) {$y$};

        \node[draw, circle, fill=red, minimum size=7pt, inner sep=0pt] (w0) at (-1, 1) {};
        \node[draw, circle, fill=blue, minimum size=7pt, inner sep=0pt] (w1) at (-1, -1) {};
        \node[draw, circle, fill=orange, minimum size=7pt, inner sep=0pt] (w2) at (1, -1) {};
        \node[draw, circle, fill=green, minimum size=7pt, inner sep=0pt] (w3) at (1, 1) {};

        \draw (w0) -- (w1);
        \draw (w0) -- (w2);
        \draw (w0) -- (w3);
        \draw (w2) -- (w3);
    \end{scope}

    \draw[->>, thick] (\RcircleLeft + 0.5, 0) -- (\Xshift - \RcircleRight - 0.5, 0) node[midway, above] {$\alpha$};

\end{tikzpicture}
    \caption{EZ-congruence and EZ-decomposition: A $7$-vertex graph $x$ with mass $4$.}
    \label{fig:EZ-decomposition}
\end{figure}

Each $M$-structure admits an essentially unique EZ-decomposition in the following sense.

\begin{proposition}[{Uniqueness of EZ-decomposition \cite[Proposition 6.9.]{berger2011extension}}] \label{prop:UniquenessOfEZdecomposition}
    Let $M$ be a symmetric set, $A$ be a non-empty finite set, and $x\in M(A)$ be an $M$-structure.
    Then $x$ admits an EZ-decomposition.
    Moreover, for any two EZ-decompositions $(\alpha\colon A \twoheadrightarrow B, y)$ and $(\alpha'\colon A \twoheadrightarrow B', y')$ of $x$, there exists a (necessarily unique) bijection $\sigma \colon B \to
    B'$ such that $\sigma \alpha = \alpha'$ and $y= y' \sigma$.
    \[
    \begin{tikzcd}
    &B\ar[dd, "\exists ! \sigma", dashed]& M(B)\ni y\\
        A\ar[ru,"\alpha", twoheadrightarrow]\ar[rd,"\alpha'"', twoheadrightarrow]&&\\
        &B'&M(B')\ni y'\ar[uu, mapsto, "M(\sigma)"']
    \end{tikzcd}
    \]
\end{proposition}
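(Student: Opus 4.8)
The plan is to prove both the existence and the essential uniqueness of EZ-decompositions by exploiting the fact that $\F$ is an EZ-category, so that every morphism factors (essentially uniquely) as a split epimorphism followed by a split monomorphism, and that surjections in $\F$ are precisely the epimorphisms. I would begin with existence: given $x \in M(A)$, consider the (non-empty, since $A$ is finite) poset of decompositions $(\alpha \colon A \twoheadrightarrow B, y)$ of $x$ with $\alpha$ surjective, ordered by factorization of the surjections; the identity decomposition $(\id_A, x)$ lives in it. Since $\abs{B} \leq \abs{A}$ for any surjection $\alpha \colon A \twoheadrightarrow B$ and $\abs{A}$ is finite, we may pick such a decomposition with $\abs{B}$ minimal. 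If the corresponding $y$ were degenerate, it would admit a decomposition $(\beta \colon B \twoheadrightarrow C, z)$ with $\beta$ a non-invertible surjection, and then $(\beta\alpha, z)$ would be a decomposition of $x$ with $\abs{C} < \abs{B}$, contradicting minimality. Hence $y$ is non-degenerate and $(\alpha, y)$ is an EZ-decomposition.

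For uniqueness, suppose $(\alpha \colon A \twoheadrightarrow B, y)$ and $(\alpha' \colon A \twoheadrightarrow B', y')$ are both EZ-decompositions of $x$. Here I would invoke the EZ-category structure of $\F$: the pushout (or more precisely the relevant factorization data) of the two surjections $\alpha, \alpha'$ under $A$ is again a surjection, so we get surjections $\gamma \colon B \twoheadrightarrow D$ and $\gamma' \colon B' \twoheadrightarrow D$ with $\gamma\alpha = \gamma'\alpha'$; then $y\gamma = x = y'\gamma'$. The key point is that $\gamma$ must be invertible: since $y$ is non-degenerate, it admits no decomposition along a non-invertible surjection, yet $(\gamma, y\gamma = z)$ would be such a decomposition unless $\gamma$ is a bijection. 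Symmetrically $\gamma'$ is a bijection, and $\sigma \coloneqq \gamma'^{-1}\gamma \colon B \to B'$ is the desired bijection, with $\sigma\alpha = \gamma'^{-1}\gamma\alpha = \gamma'^{-1}\gamma'\alpha' = \alpha'$ and $y = y'\gamma'\gamma^{-1}\cdot\gamma = \dots = y'\sigma$ after untangling (more carefully: from $z = y\gamma = y'\gamma'$ and invertibility one computes $y = z\gamma^{-1} = y'\gamma'\gamma^{-1}$, then check $\gamma'\gamma^{-1}$ agrees with $\sigma^{-1}$, i.e.\ $y = y'\sigma$ with the orientation chosen so that $\sigma\alpha = \alpha'$). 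Uniqueness of $\sigma$ is immediate since $\alpha$ is an epimorphism: any two solutions $\sigma_1, \sigma_2$ satisfy $\sigma_1\alpha = \alpha' = \sigma_2\alpha$, hence $\sigma_1 = \sigma_2$.

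The main obstacle, and the place where the EZ-category hypothesis on $\F$ is genuinely used, is establishing that the two surjections $\alpha$ and $\alpha'$ admit a common surjective coequalizer-type quotient $\gamma, \gamma'$ with $\gamma\alpha = \gamma'\alpha'$ \emph{and} that this quotient interacts correctly with non-degeneracy — i.e.\ that there is no "independent" degeneracy hiding in $B$ that $B'$ does not see. Concretely one wants: if $\alpha, \alpha' \colon A \twoheadrightarrow B, B'$ are surjections, the "largest common quotient under $A$" has underlying set of size exactly $\abs{B}$ iff $\alpha$ factors through $\alpha'$. In $\F = \FinSet_{\neq\emptyset}$ this is the elementary statement that the join of the two kernel-pair equivalence relations on $A$ governs the situation, but phrasing it so that it meshes with the abstract EZ-category machinery (rather than re-deriving everything by hand) is the delicate step; I would lean on \cite[Proposition 6.9]{berger2011extension} for exactly this, treating it as the crux and checking only that $\F$ satisfies the hypotheses, which was already asserted via \cite[Proposition 4.2 and Theorem 5.6]{campion2023cubical}.

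Since this proposition is quoted verbatim from \cite{berger2011extension}, in the actual write-up I would most likely not reprove it at all but simply cite it, having already verified that $\F$ is an EZ-category; the sketch above indicates what the underlying argument is should the reader want it.
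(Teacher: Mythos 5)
Your bottom line---verify that $\F$ is an EZ-category and then cite \cite[Proposition 6.9]{berger2011extension}---is exactly what the paper does: the proposition is stated purely as a citation and no proof is given in the paper, so at the level of the actual write-up your proposal coincides with it. Your existence argument (minimise $|B|$ over surjective decompositions) is also correct and unproblematic.

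However, the uniqueness sketch, if it were ever to be carried out rather than replaced by the citation, has its gap exactly at the step you yourself flag as the crux, and it is worth naming precisely. Writing $D$ for the pushout of the span $(\alpha,\alpha')$ with $\gamma\alpha=\gamma'\alpha'$, your phrase ``$(\gamma, y\gamma=z)$ would be such a decomposition'' is a type error ($\gamma$ has domain $B$, so one can only pull back along $\gamma$ from $M(D)$ to $M(B)$, not push $y$ forward), and more importantly it presupposes the existence of some $z\in M(D)$ with $y=z\gamma$ (and $y'=z\gamma'$). Nothing formal provides such a $z$: an arbitrary presheaf on $\F$ has no reason to send a pushout square in $\F$ to a pullback of sets. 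What makes this work is the EZ-category axiom that a span of split epimorphisms admits an \emph{absolute} pushout; absoluteness means the pushout is preserved by the Yoneda embedding, whence $M(D)\cong M(B)\times_{M(A)}M(B')$ and the desired $z$ exists, after which your argument (non-degeneracy of $y,y'$ forces $\gamma,\gamma'$ invertible, $\sigma\coloneqq\gamma'^{-1}\gamma$, $\sigma\alpha=\alpha'$, $y=y'\sigma$, uniqueness since $\alpha$ is epi) goes through verbatim. So the missing ingredient is not merely ``the largest common quotient exists and is governed by the join of the kernel relations''---that is elementary in $\F$---but that this pushout is absolute, i.e.\ seen by every presheaf; that is the genuinely non-formal content supplied by \cite[Proposition 6.9]{berger2011extension} (or, in $\F$, by a hands-on sections argument). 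Since you explicitly defer to that citation for this point, the proposal is acceptable as it stands, but the sketch should not be presented as a self-contained argument without inserting the absoluteness step.
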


The essential uniqueness of EZ-decomposition guarantees that the following notions are well defined.

\begin{definition}[Mass and EZ-congruence]\label{def:EZcongruence}
    Let $M$ be a symmetric set, $A$ be a non-empty finite set, and $x\in M(A)$ be an $M$-structure with EZ-decomposition $(\alpha\colon A \twoheadrightarrow B, y)$.
    \begin{itemize}
        \item The \demph{mass} of $x$ is the cardinality $|B|$.
        \item The \demph{EZ-congruence} associated to $x$, which will be denoted by $\sim_x$, is the equivalence relation on $A$ defined by 
    \[
    a\sim_x b \iff \alpha(a)=\alpha(b).
    \]
    \end{itemize}
\end{definition}


The EZ-decompositions can be characterised as follows.



\begin{lemma}\label{lem:CriterionOfEZDecomposition}
    Let $M$ be a symmetric set, $A$ be a non-empty finite set, $x\in M(A)$ be an $M$-structure, and $(\alpha \colon A \to B,y)$ be a decomposition of $x$.
    Then we have $|B| \ge \mass(x)$, and moreover $(\alpha,y)$ is an EZ-decomposition if and only if $|B| = \mass(x)$.
\end{lemma}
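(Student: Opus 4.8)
The plan is to reduce the statement to the case where $\alpha$ is surjective, manufacture an EZ-decomposition of $x$ out of one for $y$ in that case, and then invoke the uniqueness clause of \Cref{prop:UniquenessOfEZdecomposition}, which is precisely what pins down $\mass(x)$.

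First I would record a small observation to be used throughout: being degenerate, hence also being non-degenerate, is invariant under precomposing an $M$-structure with a bijection of $\F$. Indeed, the composite of a non-invertible surjection with a bijection is again a non-invertible surjection, and applying this both to a bijection and to its inverse gives both implications.

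Now let $(\alpha\colon A \to B, y)$ be an arbitrary decomposition of $x$. Factor $\alpha$ through its image, $\alpha = \iota\pi$ with $\pi\colon A \twoheadrightarrow A'$ surjective and $\iota\colon A' \hookrightarrow B$ injective, where $A' = \alpha(A)$. Then $(\pi, y\iota)$ is again a decomposition of $x$, now with surjective first leg, so by \Cref{prop:UniquenessOfEZdecomposition} there is an EZ-decomposition $(\gamma\colon A' \twoheadrightarrow B', w)$ of $y\iota$. Since a composite of surjections is a surjection and $w$ is non-degenerate, $(\gamma\pi, w)$ is an EZ-decomposition of $x$; hence $|B'| = \mass(x)$ by \Cref{prop:UniquenessOfEZdecomposition} and the definition of mass. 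As $\iota$ is injective and $\gamma$ is surjective, this yields $|B| \ge |A'| \ge |B'| = \mass(x)$, which is the first assertion.

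For the characterisation: if $(\alpha, y)$ is already an EZ-decomposition, then $|B| = \mass(x)$ is immediate from \Cref{prop:UniquenessOfEZdecomposition}. Conversely, if $|B| = \mass(x)$, then the chain $|B| \ge |A'| \ge |B'| = \mass(x)$ collapses to equalities, forcing the injection $\iota$ and the surjection $\gamma$ to be bijections; therefore $\alpha = \iota\pi$ is surjective, and since $y\iota = w\gamma$ with $w$ non-degenerate and $\gamma$ a bijection, the preliminary observation shows that $y\iota$, and hence $y = (y\iota)\iota^{-1}$, is non-degenerate. Thus $(\alpha, y)$ is an EZ-decomposition. I do not expect a genuine obstacle here; the two load-bearing points are exactly the bookkeeping facts above, namely that non-degeneracy transports along isomorphisms in both directions, and that pasting an EZ-decomposition of $y\iota$ onto the surjection $\pi$ again yields an EZ-decomposition of $x$.
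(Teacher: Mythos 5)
Your proposal is correct and follows essentially the same route as the paper: factor $\alpha$ through its image, take an EZ-decomposition of the pullback of $y$ along the injective leg, paste it with the surjective leg to get an EZ-decomposition of $x$, and read off both the inequality $|B| \ge \mass(x)$ and the equality criterion from when the injection and surjection collapse to bijections. The only difference is that you spell out the transport of non-degeneracy along bijections, which the paper leaves implicit.
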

\begin{proof}
    Let $\alpha = \mu\epsilon$ be an epi-mono factorisation of $\alpha$, and let $y\mu = z\beta$ be an EZ-decomposition of $y\mu$.
    \[
    \begin{tikzcd}
        A
        \arrow [r, "\epsilon", twoheadrightarrow]
        \arrow [dr, "\alpha"'] &
        C
        \arrow [r, "\beta", twoheadrightarrow]
        \arrow [d, "\mu", rightarrowtail] &
        D &
        x &
        y\mu
        \arrow [l, mapsto] & 
        z
        \arrow [l, mapsto] \\
        & B & & &
        y
        \arrow [u, mapsto]
        \arrow [ul, mapsto]&
    \end{tikzcd}
    \]
    Then we have $x = y\alpha = y\mu\epsilon = z(\beta\epsilon)$ and the rightmost expression provides an EZ-decomposition of $x$.
    Since $\beta$ is a surjection and $\mu$ is an injection, we have
    \[
    |B| \ge |C| \ge |D| = \mass(x).
    \]
    Moreover, the two equalities hold if and only if both $\mu$ and $\beta$ are invertible, or equivalently, if and only if $\alpha$ is a surjection (so that we may take $\mu = \id$) and $y$ is non-degenerate.
\end{proof}



\begin{lemma}\label{lem:laxCongruence}
Let $M$ be a symmetric set, $\phi \colon A \to A'$ be a morphism in $\F$, $a,b \in A$, and $x$ be an $M$-structure on $A'$.
If $\phi(a) \sim_x \phi(b)$ then we have $a\sim_{x\phi} b$.
\end{lemma}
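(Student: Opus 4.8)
The plan is to reduce everything to a single EZ-decomposition of $x$ and to track how it degrades after precomposing with $\phi$. First I would fix an EZ-decomposition $(\alpha \colon A' \twoheadrightarrow B, y)$ of $x$, so that by \Cref{def:EZcongruence} the hypothesis $\phi(a) \sim_x \phi(b)$ says precisely $\alpha\bigl(\phi(a)\bigr) = \alpha\bigl(\phi(b)\bigr)$. Since $x\phi = y(\alpha\phi)$, the pair $(\alpha\phi, y)$ is a decomposition of $x\phi$; however $\alpha\phi$ need not be surjective and $y$ need not remain non-degenerate, so this is not yet an EZ-decomposition. I would repair it exactly as in the proof of \Cref{lem:CriterionOfEZDecomposition}: take an epi-mono factorisation $\alpha\phi = \mu\epsilon$ with $\epsilon \colon A \twoheadrightarrow C$ and $\mu \colon C \rightarrowtail B$, then an EZ-decomposition $y\mu = z\beta$ with $\beta \colon C \twoheadrightarrow D$. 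Now $x\phi = z(\beta\epsilon)$ with $\beta\epsilon$ a surjection and $z$ non-degenerate, so $(\beta\epsilon, z)$ is a genuine EZ-decomposition of $x\phi$.

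By \Cref{prop:UniquenessOfEZdecomposition} the EZ-congruence $\sim_{x\phi}$ may be computed from this particular EZ-decomposition, so $a \sim_{x\phi} b$ holds if and only if $\beta\bigl(\epsilon(a)\bigr) = \beta\bigl(\epsilon(b)\bigr)$. It then remains to chase the hypothesis through the factorisation: from $\mu\bigl(\epsilon(a)\bigr) = \alpha\bigl(\phi(a)\bigr) = \alpha\bigl(\phi(b)\bigr) = \mu\bigl(\epsilon(b)\bigr)$ and injectivity of $\mu$ we get $\epsilon(a) = \epsilon(b)$, and applying $\beta$ gives $\beta\bigl(\epsilon(a)\bigr) = \beta\bigl(\epsilon(b)\bigr)$, i.e.\ $a \sim_{x\phi} b$. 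Phrased in terms of kernel pairs: $\sim_{x\phi}$ contains the kernel pair of $\epsilon$, which equals the kernel pair of $\alpha\phi$, and membership of $(a,b)$ in the latter is exactly what the hypothesis asserts.

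The only genuine obstacle here is conceptual rather than computational: one must resist treating $(\alpha\phi, y)$, or its image factorisation $(\epsilon, y\mu)$, as an EZ-decomposition of $x\phi$, because precomposition can turn a non-degenerate structure into a degenerate one — this is precisely why the statement is merely \emph{lax} (the converse implication can fail). Once the extra step $y\mu = z\beta$ is carried out, which can only enlarge the congruence, the desired inclusion of kernel pairs is immediate, and the well-definedness guaranteed by \Cref{prop:UniquenessOfEZdecomposition} lets us work with the convenient representative and conclude.
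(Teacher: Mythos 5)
Your proposal is correct and follows essentially the same route as the paper's own proof: fix an EZ-decomposition $x = y\alpha$, take an epi-mono factorisation $\alpha\phi = \mu\epsilon$ and an EZ-decomposition of $y\mu$ to obtain an EZ-decomposition $x\phi = z(\beta\epsilon)$, then use injectivity of $\mu$ to transfer the hypothesis. The kernel-pair rephrasing and the remark on why the statement is only lax are accurate but not needed beyond what the paper already does.
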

\begin{proof}
    Let $x=y\alpha$ be an EZ-decomposition of $x$.
    Assume $\phi(a) \sim_x \phi(b)$, which is equivalent to the equation $\alpha\phi(a) = \alpha\phi (b)$.
    We wish to prove that we have $\beta(a) = \beta(b)$ for some (and equivalently all) EZ-decomposition $x\phi = z\beta$ of $x\phi$.
    
    Let $\alpha \phi = \mu\epsilon$ be an epi-mono factorisation of $\alpha \phi$, and let $y\mu = z\gamma$ be an EZ-decomposition of $y\mu$.
    \[
    \begin{tikzcd}
        A
        \ar[r,"\phi"]
        \ar[d,"\epsilon", twoheadrightarrow]
        \ar[dd, "\beta"', bend right, twoheadrightarrow]&
        A'
        \ar[d, twoheadrightarrow, "\alpha"] &
        x\phi &
        x
        \arrow [l, mapsto]\\
        B
        \ar[r,"\mu", rightarrowtail]
        \ar[d,"\gamma", twoheadrightarrow]&
        B' &
        y\mu
        \arrow [u, mapsto] &
        y
        \arrow [u, mapsto]
        \arrow [l, mapsto] \\
        C& &
        z
        \arrow [u, mapsto] &
    \end{tikzcd}
    \]
    Then we have
    \[
    x\phi = y\alpha\phi = y\mu\epsilon = z(\gamma\epsilon)
    \]
    and the rightmost expression is an EZ-decomposition of $x\phi$.
    It thus suffices to prove $\gamma\epsilon(a) = \gamma\epsilon(b)$, which indeed holds because
    \[
    \mu\epsilon(a) = \alpha\phi(a) = \alpha\phi(b) = \mu\epsilon(b)
    \]
    and $\mu$ is monic.
\end{proof}

    Notice that the converse of the above lemma does not hold in general.
    The lower half of \Cref{fig:DecompositionLifting} provides a counterexample.

\begin{lemma}\label{lem:pbdoesntIncreaseMass}
Let $M$ be a symmetric set, $\phi \colon A \to A'$ be a morphism in $\F$, and $x$ be an $M$-structure on $A'$.
Then we have $\mass(x) \geq \mass(x\phi)$.
\end{lemma}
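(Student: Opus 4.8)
The plan is to reduce everything to \Cref{lem:CriterionOfEZDecomposition}. Recall that the mass of an $M$-structure is, by \Cref{def:EZcongruence}, the cardinality of the codomain appearing in any of its EZ-decompositions, and that \Cref{lem:CriterionOfEZDecomposition} bounds this cardinality \emph{below} by the mass for every decomposition, not only the EZ-ones. So the idea is to produce, out of an EZ-decomposition of $x$, a (possibly non-EZ) decomposition of $x\phi$ whose codomain already has the right size, and then read off the inequality.

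Concretely, I would first fix an EZ-decomposition $x = y\alpha$ with $\alpha \colon A' \twoheadrightarrow B$, so that $\mass(x) = |B|$. Then I would observe that $x\phi = y(\alpha\phi)$, which exhibits the pair $(\alpha\phi \colon A \to B,\ y)$ as a decomposition of $x\phi$ in the sense of \Cref{def:EZ-decomposition}. Applying the inequality part of \Cref{lem:CriterionOfEZDecomposition} to this decomposition gives $|B| \ge \mass(x\phi)$, and combining with $\mass(x) = |B|$ yields $\mass(x) \ge \mass(x\phi)$, as desired.

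I do not anticipate any real obstacle: the only point requiring a moment's care is that $\alpha\phi$ need not be surjective, so $(\alpha\phi, y)$ is in general only a decomposition of $x\phi$ and not an EZ-decomposition; this is precisely why one invokes the cardinality inequality of \Cref{lem:CriterionOfEZDecomposition} rather than its characterisation of EZ-decompositions. Alternatively, one could inline the epi--mono factorisation argument from the proof of that lemma (factor $\alpha\phi$ and take an EZ-decomposition of the resulting structure), but this would merely duplicate work already done.
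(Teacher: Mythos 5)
Your proof is correct, but it takes a different route from the paper's. The paper dispatches this lemma as an immediate corollary of \Cref{lem:laxCongruence}: since $\phi(a)\sim_x\phi(b)$ implies $a\sim_{x\phi}b$, the assignment $[\phi(a)]_{\sim_x}\mapsto[a]_{\sim_{x\phi}}$ is a well-defined surjection from a subset of $A'/{\sim_x}$ onto $A/{\sim_{x\phi}}$, whence $\mass(x)\geq\mass(x\phi)$ by counting equivalence classes. You instead bypass the congruence statement entirely: starting from an EZ-decomposition $x=y\alpha$ with $|B|=\mass(x)$, you note that $(\alpha\phi,y)$ is a (generally non-EZ) decomposition of $x\phi$ and invoke the inequality part of \Cref{lem:CriterionOfEZDecomposition} to get $|B|\geq\mass(x\phi)$. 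Both arguments are sound and equally short; yours is arguably more economical in that it only needs the coarse cardinality bound rather than the finer congruence-lifting statement (whose proof again runs through an epi--mono factorisation), while the paper's choice reflects that \Cref{lem:laxCongruence} has just been established and that the congruence viewpoint is the one exploited repeatedly later in the paper. You also correctly flag the one delicate point, namely that $\alpha\phi$ need not be surjective, which is exactly why only the inequality (and not the EZ-characterisation) of \Cref{lem:CriterionOfEZDecomposition} may be applied.
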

\begin{proof}
    This is an immediate corollary of \Cref{lem:laxCongruence}.
\end{proof}

\subsection{Levels in the topos of symmetric sets}

For a non-negative integer $l\geq 0$, we write $\F_l$ for the full subcategory of $\F$ consisting of the sets of size less than or equal to $l+1$. Geometrically, $\F_l$ is the category of standard symmetric simplices of dimension less than or equal to $l$. 
We also write $\F_{-\infty}$ for the empty category, and $\F_{\infty}$ for $\F$ itself.

The following proposition is mentioned in \cite{lawvere1988toposesGenerated}. See also \cite[Corollary 3.3, Example 6.3]{menni2024successive}.
\begin{proposition}[Classification of levels \cite{lawvere1988toposesGenerated}]\label{prop:Levels}
    The complete lattice of levels in the topos $\PSh(\F)$ is isomorphic to the total order 
    \[
    -\infty < 0< 1< 2< \dots<\infty,     
    \]
    where the level labelled by \footnote{We avoid simply calling it ``level $l$'' because the terms ``level $0$'' and ``level $1$'' have specific meanings defined by the Aufhebung relation.} $l$ is the adjoint triple
    \[
    \begin{tikzcd}
        \PSh(\F_{l}) 
        \ar[r, shift left =10pt, "\mathrm{Lan}_{\iota_l}"]
        \ar[r, shift right=10pt, "\mathrm{Ran}_{\iota_l}"']
        &\PSh(\F)\ar[l, "{\iota_l^*}"']
    \end{tikzcd}
    \]
    induced by the inclusion $\iota_l \colon \F_l \to \F$.
\end{proposition}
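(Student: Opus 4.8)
The plan is to deduce this from the general classification of essential subtopoi (levels) of a presheaf topos. In the form we need, for a small category $\C$ in which idempotents split, the lattice of levels of $\PSh(\C)$ is canonically isomorphic to the lattice (ordered by inclusion) of full replete subcategories of $\C$ that are closed under retracts, a subcategory $\D \hookrightarrow \C$ corresponding to the essential subtopos $\PSh(\D) \hookrightarrow \PSh(\C)$ with adjoint triple $\mathrm{Lan}_{\iota} \dashv \iota^{*} \dashv \mathrm{Ran}_{\iota}$; the completeness asserted in the statement is then the theorem of \cite{kelly1989complete}. (See \cite{lawvere1988toposesGenerated} and \cite[Corollary 3.3]{menni2024successive}.) Granting this, the entire task is to identify the retract-closed full subcategories of $\F$.

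So first, idempotents split in $\F$: an idempotent $e\colon A \to A$ factors through its image, which is a non-empty finite set. Second, a non-empty finite set $B$ is a retract of a non-empty finite set $A$ in $\F$ precisely when $|B| \le |A|$ --- if $B$ is a retract of $A$ there is a monomorphism $B \rightarrowtail A$, so $|B| \le |A|$, and conversely any injection $B \hookrightarrow A$ can be completed to a retraction by sending the complement of its image anywhere in $B$ (which is non-empty). Hence a full replete subcategory $\D \subseteq \F$ is closed under retracts if and only if $\{\, |A| \mid A \in \D \,\} \subseteq \Z_{\geq 1}$ is downward closed, i.e.\ if and only if $\D$ is one of $\F_{-\infty}$ (the empty category), $\F_{0}, \F_{1}, \F_{2}, \dots$, or $\F_{\infty} = \F$. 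Ordered by inclusion these form a chain order-isomorphic to $-\infty < 0 < 1 < 2 < \dots < \infty$, which is itself a complete lattice; transporting along the classification yields exactly the claimed description, the level labelled $l$ being the triple induced by $\iota_{l}$.

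The classification invoked at the outset is the one ingredient I do not expect to be routine, and it is the step I would flag as the main obstacle in a self-contained account; its instance for $\F$ is recorded in \cite[Example 6.3]{menni2024successive}. The easy half is that each $\iota_{l}$, being fully faithful, makes $\mathrm{Lan}_{\iota_{l}}$ and $\mathrm{Ran}_{\iota_{l}}$ fully faithful, so the triple is genuinely a level, and these levels are pairwise distinct because the representable on an $(l+1)$-element set is $l$-skeletal but not $l'$-skeletal for $l' < l$ (such a set is not a retract of any set of size $\le l'+1$). The hard half --- that an arbitrary level $m$ is one of these --- is where the material of the previous subsection becomes relevant: one forms the full subcategory $\D_m = \{A \mid \F(-,A)\text{ is }m\text{-skeletal}\}$, which is retract-closed (a retract of an $m$-skeletal object is $m$-skeletal, and retracts of $\F(-,A)$ in $\PSh(\F)$ are the $\F(-,B)$ with $B$ a retract of $A$) and hence some $\F_l$, and then shows $m$ agrees with the level of $\F_l$. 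The crux of that comparison is that every $m$-skeletal presheaf is the colimit of the representables on the underlying sets of its non-degenerate cells --- the skeletal filtration afforded by \Cref{prop:UniquenessOfEZdecomposition} --- with those underlying sets lying in $\D_m$, a point at which $\F$ is less rigid than $\Delta$.
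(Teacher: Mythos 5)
The paper itself offers no proof of this proposition --- it is recorded as a known result, with references to \cite{lawvere1988toposesGenerated} and \cite[Corollary 3.3, Example 6.3]{menni2024successive} --- so your attempt can only be measured against the literature it defers to, and there it has a genuine gap. The general classification you invoke at the outset is not a theorem: for a small idempotent-complete $\C$, the levels of $\PSh(\C)$ are \emph{not} in general the retract-closed full subcategories of $\C$. What \cite{kelly1989complete} actually gives is a bijection between essential subtopoi of $\PSh(\C)$ and \emph{idempotent two-sided ideals} of $\C$, and such an ideal need not be generated by the identities it contains. For instance, take $\C$ to be a dense linear order such as $(\mathbb{Q},\le)$, or the one-object category on the additive monoid of non-negative rationals: idempotents split (they are all identities), yet the ideal of strict inequalities (resp.\ of strictly positive elements) is idempotent and is not the ideal generated by any family of objects, so it yields a level not of the form $\PSh(\D)\hookrightarrow\PSh(\C)$. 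Consequently the ``hard half'' cannot be outsourced to a general principle about idempotent-complete index categories; it is exactly the $\F$-specific content of the cited results, and your sketch of it asserts rather than proves the crucial point, namely that the supports of the non-degenerate cells of an $m$-skeletal presheaf lie in $\D_m$ (equivalently, that the ideal attached to an arbitrary level is generated by identities).

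The good news is that the missing step admits a short argument special to $\F$, which is morally why the proposition holds here: in $\F$ every surjection and every injection splits (all sets being non-empty and finite), so if $I$ is any two-sided ideal of $\F$ and $\alpha\in I$ has image factorisation $\alpha=\mu\epsilon$ through $C$, then choosing a section $s$ of $\epsilon$ and a retraction $r$ of $\mu$ gives $\id_C=r\alpha s\in I$; hence $I$ coincides with the ideal generated by the identities of the images of its members, i.e.\ with the ideal of maps factoring through a retract-closed (equivalently, size-downward-closed) family of objects, and every such ideal is idempotent. Feeding this into the Kelly--Lawvere correspondence, and then running your (correct) computations --- idempotents split in $\F$, $B$ is a retract of $A$ iff $|B|\le|A|$, the triples $\mathrm{Lan}_{\iota_l}\dashv\iota_l^*\dashv\mathrm{Ran}_{\iota_l}$ are pairwise distinct levels --- does yield the proposition, and in fact the stronger statement in the paper's subsequent remark that \emph{all} subtopoi of $\PSh(\F)$ are levels is what \cite[Corollary 3.3]{menni2024successive} and \cite[Corollary 4]{marques2024criterion} provide. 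So the architecture of your proposal is salvageable, but as written the load-bearing step is either false in the stated generality or left unproved where it is true.
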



\begin{remark}[All subtopoi are levels]
As mentioned in \cite{lawvere1988toposesGenerated}, in this special topos $\PSh(\F)$, all subtopoi are in fact of the form of \Cref{prop:Levels} and thus levels. This fact can be seen as an instance of either \cite[Corollary 3.3]{menni2024successive} or \cite[Corollary 4]{marques2024criterion}.
\end{remark}


Let us now recall the notions of \demph{skeleton} and \demph{coskeleton}.
\begin{definition}[Skeleton and coskeleton]
    For a level of $\PSh(\F)$ labelled by $l \in \{-\infty, 0, 1,  \dots ,\infty\}$, the induced comonad on $\PSh(\F)$ is denoted by $\sk_l$, and the induced monad is denoted by $\cosk_l$.
    A symmetric set $M$ is said to be \demph{$l$-skeletal} if the counit $\sk_l M \to M$ is invertible, and is said to be \demph{$l$-coskeletal} if the unit $M \to \cosk_l M$ is invertible.
\end{definition}

\begin{proposition}[Skeletality]\label{prop:characterizationOfSkeletality}
For a symmetric set $M$, its $l$-skeleton $\sk_l M$ is the symmetric subset $\epsilon_{M}\colon \sk_l M \rightarrowtail M$ consisting of the $M$-structures whose mass is less than or equal to $l+1$.
In other words, for any non-empty finite set $A$, we have
\[
(\sk_l M) (A) = \{x\in M(A)\mid \mass(x) \leq l+1\}\subset M(A).
\]
    In particular, a symmetric set $M$ is $l$-skeletal if and only if $\mass(x)\leq l+1$ for any non-empty finite set $A$ and any $M$-structure $x\in M(A)$.
\end{proposition}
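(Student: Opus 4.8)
The plan is to compute the comonad $\sk_l=\mathrm{Lan}_{\iota_l}\circ\iota_l^*$ pointwise and then read off its image inside $M$. Since $\mathrm{Lan}_{\iota_l}$ is left Kan extension along $\iota_l^\op$, the pointwise colimit formula says that, for a fixed non-empty finite set $A$, the value $(\sk_l M)(A)$ is the colimit of the diagram $(B,\alpha)\mapsto M(B)$ indexed by the comma category $\mathcal{I}_A$ whose objects are pairs $(B,\alpha)$ with $B\in\F_l$ and $\alpha\colon A\to B$ a function, and whose morphisms $(B,\alpha)\to(B',\alpha')$ are the functions $\beta\colon B'\to B$ satisfying $\beta\alpha'=\alpha$ (acting on the diagram by $y\mapsto y\beta$). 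Under this identification the counit $\epsilon_{M,A}\colon(\sk_l M)(A)\to M(A)$ is the map induced by the cocone $M(B)\to M(A)$, $y\mapsto y\alpha$.

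First I would determine the image of $\epsilon_{M,A}$: it is visibly $\{\,y\alpha\mid(B,\alpha)\in\mathcal{I}_A,\ y\in M(B)\,\}$, i.e.\ the set of $M$-structures on $A$ admitting a decomposition whose codomain has at most $l+1$ elements. By \Cref{lem:CriterionOfEZDecomposition}, any decomposition $(\alpha\colon A\to B,y)$ of an $M$-structure $x$ satisfies $|B|\ge\mass(x)$, and the EZ-decomposition of $x$ attains this bound; hence $x$ admits a decomposition through $\F_l$ if and only if $\mass(x)\le l+1$. Thus $\Image(\epsilon_{M,A})=\{x\in M(A)\mid\mass(x)\le l+1\}$, and by \Cref{lem:pbdoesntIncreaseMass} these subsets are stable under pullback, so they assemble into a symmetric subset of $M$.

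The crux is to show that $\epsilon_{M,A}$ is injective, which then forces $\sk_l M$ to coincide with this symmetric subset, with $\epsilon_M$ the inclusion. The key idea is that every class in the colimit is represented by a genuine EZ-decomposition of its image. Given a representative $(B,\alpha,y)$, take an epi--mono factorisation $\alpha=m e$ with $e\colon A\twoheadrightarrow B''$; then $m\colon B''\to B$, read as a morphism $(B,\alpha)\to(B'',e)$ of $\mathcal{I}_A$, shows that the class of $y$ at $(B,\alpha)$ equals the class of $ym$ at $(B'',e)$. Next take an EZ-decomposition $(\beta\colon B''\twoheadrightarrow C,z)$ of $ym$; since $|C|=\mass(ym)\le|B''|\le l+1$, the surjection $\beta\colon B''\to C$, read as a morphism $(C,\beta e)\to(B'',e)$ of $\mathcal{I}_A$, shows that the class of $ym$ equals the class of $z$ at $(C,\beta e)$. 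As $\delta:=\beta e\colon A\twoheadrightarrow C$ is a surjection and $z$ is non-degenerate, $(\delta,z)$ is an EZ-decomposition of $z\delta$, which is exactly the image under $\epsilon_{M,A}$ of the class in question. Now, if two classes share a common image $x$, represent each as above by a pair $(\delta_i\colon A\twoheadrightarrow C_i,z_i)$ that is an EZ-decomposition of $x$; by \Cref{prop:UniquenessOfEZdecomposition} there is a bijection $\sigma\colon C_1\to C_2$ with $\sigma\delta_1=\delta_2$ and $z_1=z_2\sigma$, and $\sigma^{-1}\colon C_2\to C_1$, read as a morphism $(C_1,\delta_1)\to(C_2,\delta_2)$ of $\mathcal{I}_A$, identifies the two classes. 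Hence $\epsilon_{M,A}$ is injective and the main claim follows.

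Finally, the "in particular" clause is immediate: $\sk_l M\to M$ is an isomorphism if and only if it is surjective on $M(A)$ for every $A$, that is, if and only if every $M$-structure has mass at most $l+1$. I expect the injectivity step to be the main obstacle; within it, the delicate point is the variance of $\mathcal{I}_A$ --- one must check that the epi--mono factorisation and the EZ-decomposition genuinely furnish morphisms of $\mathcal{I}_A$ (their underlying functions pointing "against" the structure maps $\alpha$) --- after which the colimit identifications are immediate and the rest is routine.
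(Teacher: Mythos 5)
Your proof is correct, but it takes a genuinely different route from the paper: the paper disposes of this proposition by citation, invoking the general theory of EZ-categories (\cite[Corollary 6.10]{berger2011extension}, see also \cite[Proposition 2.1, Example 2.2]{menni2019monic}), whereas you verify the statement directly from the pointwise colimit formula for $\mathrm{Lan}_{\iota_l}$. Your computation is sound: the comma category and its variance are identified correctly, the image of the counit is exactly $\{x \mid \mass(x)\le l+1\}$ by \Cref{lem:CriterionOfEZDecomposition}, closure under pullback is \Cref{lem:pbdoesntIncreaseMass}, and the injectivity argument --- replacing any representative by an EZ-representative via an epi--mono factorisation followed by an EZ-decomposition, then identifying any two EZ-representatives of the same structure via \Cref{prop:UniquenessOfEZdecomposition} --- is a clean zig-zag argument in the filtered-looking but not obviously filtered indexing category, and you are right that the only delicate point is checking that these factorisations really give morphisms of the comma category. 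What the citation buys is brevity and the assurance that the statement is an instance of a general phenomenon for EZ/generalised Reedy categories; what your argument buys is self-containedness (modulo \Cref{prop:UniquenessOfEZdecomposition}, which the paper also imports from \cite{berger2011extension}) and an explicit view of where existence of mass-realising decompositions and their essential uniqueness enter the skeleton construction.
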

\begin{proof}
This is a special case of \cite[Corollary 6.10]{berger2011extension}.
See also \cite[Proposition 2.1, Example 2.2]{menni2019monic}.
\end{proof}

To provide a concrete description of $l$-coskeletality, we need the notion of \demph{cycle}, whose definition in turn requires the following notation.
For a finite set $A$ with $|A|>1$ and an element $a\in A$, we write $\delta^a$ for the canonical embedding
\[
\delta^a \colon A\setminus \{a\} \rightarrowtail A.
\]
We will not notationally distinguish $\delta^a$'s  with different codomains.
For example, we write $\delta^a\delta^b = \delta^b \delta^a$ to express the commutativity of the square
\[
\begin{tikzcd}
    A\setminus\{a,b\} \ar[r,rightarrowtail, "\delta^a"]\ar[d,rightarrowtail, "\delta^b"]&A\setminus\{b\}\ar[d,rightarrowtail, "\delta^b"]\\
    A\setminus\{a\}\ar[r,rightarrowtail, "\delta^a"]&A.
\end{tikzcd}
\]

\begin{definition}[Cycle]\label{def:cycle}
    For a positive integer $k> 0$, a \demph{$k$-cycle} in a symmetric set $M$ is a pair $(P, \{c_p\}_{p\in P})$ of a $(k+1)$-element set $P$ and a family of $M$-structures $c_p \in M(P \setminus \{p\})$ that satisfies \demph{the cycle equation}
    \[
    c_p \delta^q = c_q \delta^p \in M(P\setminus \{p,q\})
    \]
    for any distinct elements $p,q \in P$ (except for the case $k=1$, in which case we do not enforce the cycle equation because $P\setminus\{p,q\}$ would be empty).
    A \demph{filler} of a cycle $(P, \{c_p\}_{p\in P})$ is an $M$-structure $f\in M(P)$ that satisfies $f\delta^p = c_p$ for every $p\in P$.
    We say $(P, \{c_p\}_{p\in P})$ is \demph{unfillable} if it admits no fillers.
\end{definition}
See \Cref{fig:CycleFilling} for a visualisation of cycle filling in the case where $M= \Graph$.

\begin{figure}[ht]
    \centering
    \begin{tikzpicture}
    \def\r{2} 
    \def\R{6} 

    \begin{scope}
        
        \foreach \i in {0,...,6} {
            \node[draw, circle, minimum size=5pt, inner sep=0pt, fill] (v\i) at ({90 + \i * 360 / 7}:\r) {};
        }
        \draw (v0) -- (v1);
        \draw (v0) -- (v2);
        \draw (v0) -- (v3);
        \draw (v0) -- (v4);
        \draw (v0) -- (v6);
        \draw (v1) -- (v5);
        \draw (v2) -- (v5);
        \draw (v3) -- (v4);
        \draw (v3) -- (v5);
        \draw (v4) -- (v5);
        \draw (v4) -- (v6);
        \draw (v5) -- (v6);
    \end{scope}

    \foreach \j in {0,...,6} {
        \begin{scope}[shift={({90 + \j * 360 / 7}:\R)}]
            \draw[thick] (0, 0) circle (\r + 0.5);
            
            \foreach \i in {0,...,6} {
                \ifnum\i=\j
                    \node[blue] at ({90 + \i * 360 / 7}:\r) {\textbf{\texttimes}};
                \else
                    \node[draw, circle, minimum size=5pt, inner sep=0pt, fill] (w\j\i) at ({90 + \i * 360 / 7}:\r) {};
                \fi
            }

            \foreach \i/\k in {0/1, 0/2, 0/3, 0/4, 0/6, 1/5, 2/5, 3/4, 3/5, 4/5, 4/6, 5/6} {
                \ifnum\i=\j
                \else
                    \ifnum\k=\j
                    \else
                        \draw (w\j\i) -- (w\j\k);
                    \fi
                \fi
            }
        \end{scope}
    }
\end{tikzpicture}
    \caption{
    A filler of a $6$-cycle in $\Graph$
    }
    \label{fig:CycleFilling}
\end{figure}
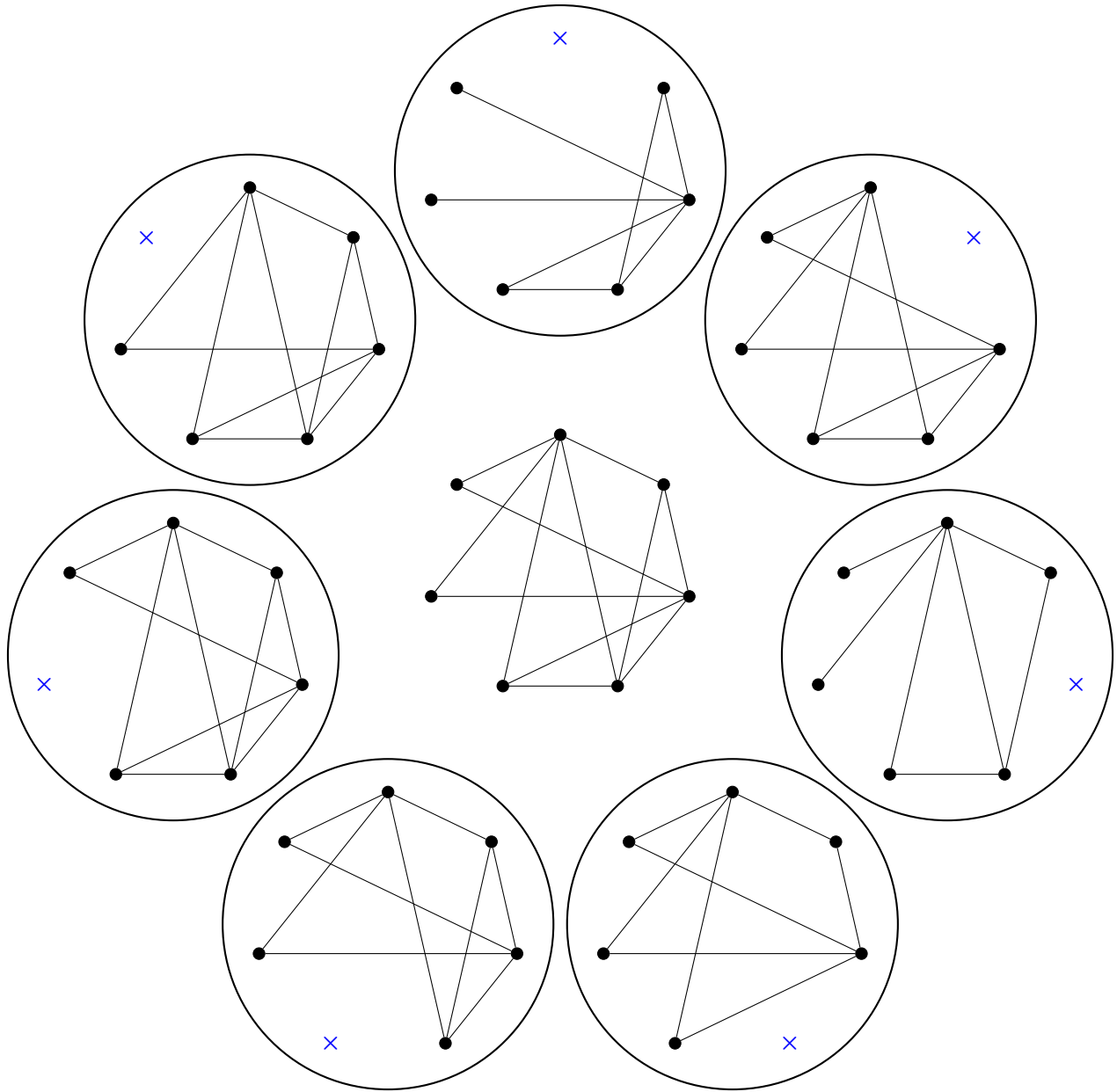


\begin{lemma}[Universality of \texorpdfstring{$\sk_l \Delta^k$}{skDelta}]\label{lem:UniversalityOfSkDelta}
    Let $k$ and $l$ be non-negative integers, and let $P$ be a $(k+1)$-element set representing $\Delta^k \cong \F(-,P)$.
    Then for each symmetric set $M$, there is a bijection between
    \begin{itemize}
        \item symmetric maps $f\colon \sk_l \Delta^k \to M$, and
        \item families of $M$-structures $\{x_S \in M(S)\}_{S\in \Pow_{\leq l+1}(P)}$ such that $x_S * (S \leftarrowtail S') = x_{S'}$ for all pairs $S' \subset S$ in $\Pow_{\leq l+1}(P)$ (where $\Pow_{\leq l+1}(P)$ is the set of non-empty subsets of $P$ of size at most $l+1$)
    \end{itemize}
    which is natural in $M$.
    Consequently, for a positive integer $k>0$, the symmetric maps $\sk_{k-1} \Delta^k \to M$ are in bijection with the $k$-cycles in $M$.
\end{lemma}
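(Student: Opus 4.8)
The plan is to reduce the statement to the Yoneda lemma by first describing $\sk_l\Delta^k$ explicitly as a presheaf and then exhibiting it as a colimit of representables. By \Cref{prop:characterizationOfSkeletality}, $(\sk_l\Delta^k)(A)$ consists of those functions $g\colon A\to P$ whose mass is at most $l+1$. First I would observe that the epi--mono factorisation $A\twoheadrightarrow\Image(g)\rightarrowtail P$ of $g$ is an EZ-decomposition of $g$ viewed as a $\Delta^k$-structure: its monic part is non-degenerate, since an injective $\Delta^k$-structure admits no decomposition with non-invertible surjective part. Hence $\mass(g)=\abs{\Image(g)}$ by \Cref{lem:CriterionOfEZDecomposition}, so that
\[
(\sk_l\Delta^k)(A)=\{g\colon A\to P\mid \abs{\Image(g)}\le l+1\},
\]
with presheaf structure given by precomposition.

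For the main bijection, write $j_S\colon S\rightarrowtail P$ for the inclusion of a non-empty subset $S$, and consider the diagram $S\mapsto\F(-,S)$ indexed by the poset $\Pow_{\le l+1}(P)$ with transition maps induced by subset inclusions. The maps $\F(-,S)\to\Delta^k$ corresponding under Yoneda to the $j_S$ all factor through $\sk_l\Delta^k$, and I claim they exhibit $\sk_l\Delta^k$ as $\operatorname{colim}_{S\in\Pow_{\le l+1}(P)}\F(-,S)$. I would check this pointwise: an element of the colimit at a finite set $A$ is represented by a pair $(S,g\colon A\to S)$, corestricting $g$ to its image picks out a canonical representative with $g$ surjective, and the resulting comparison map to the set displayed above is a bijection. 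Applying $\PSh(\F)(-,M)$ then turns this colimit into a limit, and since $\PSh(\F)(\F(-,S),M)\cong M(S)$ by Yoneda --- with the transition maps becoming restriction along subset inclusions --- one gets
\[
\PSh(\F)(\sk_l\Delta^k,M)\;\cong\;\lim_{S\in\Pow_{\le l+1}(P)}M(S),
\]
which is precisely the set of compatible families in the statement, naturally in $M$. Unwinding the identifications, $f$ corresponds to the family $x_S=f_S(j_S)$, and a family $\{x_S\}$ to the map with $f_A(g)=x_{\Image(g)}\cdot g'$ where $g'\colon A\twoheadrightarrow\Image(g)$ is the corestriction of $g$; an alternative to the colimit argument is to verify directly that these two assignments are mutually inverse and natural, the needed compatibilities being exactly the relations among the generators $j_S$.

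For the ``consequently'' clause, take $l=k-1$, so that $\Pow_{\le k}(P)$ is exactly the poset of non-empty proper subsets of $P$ (because $\abs{P}=k+1$). Given a compatible family $\{x_S\}$ over this poset, I would set $c_p\coloneqq x_{P\setminus\{p\}}$; the cycle equation $c_p\delta^q=c_q\delta^p$ then follows by applying compatibility to the two inclusions $P\setminus\{p,q\}\hookrightarrow P\setminus\{p\}$ and $P\setminus\{p,q\}\hookrightarrow P\setminus\{q\}$. Conversely, I would extend a $k$-cycle to a family by $x_{P\setminus\{p_1,\dots,p_j\}}\coloneqq c_{p_1}\delta^{p_2}\delta^{p_3}\cdots\delta^{p_j}$. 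The point requiring care is that this is well defined, i.e.\ independent of the labelling $p_1,\dots,p_j$ of the removed elements: a transposition among $p_2,\dots,p_j$ is absorbed by the commutativity relations $\delta^a\delta^b=\delta^b\delta^a$, while a transposition involving $p_1$ reduces, after reordering the face maps, to one application of the cycle equation; since such transpositions generate the symmetric group on $\{p_1,\dots,p_j\}$, the value is independent of the labelling. Compatibility of the extended family, and the fact that the two assignments are mutually inverse, then follow by iterating these identities. The case $k=1$ is a degenerate boundary case: there is neither a cycle equation nor any compatibility condition, and both sides are simply pairs of $M$-structures indexed by the two one-element subsets of $P$.

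I expect the one genuinely non-formal step to be the well-definedness of $x_{P\setminus\{p_1,\dots,p_j\}}$ above, where the cycle equation has to be played off against the commutativity of the face maps; identifying $\sk_l\Delta^k$ and passing to a limit of hom-sets should be routine.
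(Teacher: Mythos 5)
Your proposal is correct. The only real difference from the paper is packaging: for the main bijection you exhibit $\sk_l\Delta^k$ as $\operatorname{colim}_{S\in\Pow_{\le l+1}(P)}\F(-,S)$ and then apply $\PSh(\F)(-,M)$ and Yoneda, whereas the paper directly writes down the two assignments $f\mapsto\{f_S(P\leftarrowtail S)\}_S$ and $\{x_S\}\mapsto\bigl(\alpha\mapsto x_{\Image(\alpha)}*(\Image(\alpha)\twoheadleftarrow A)\bigr)$ and checks they are mutually inverse and natural --- which is exactly the ``alternative'' you sketch, with the same formulas; the pointwise verification of your colimit claim (canonical representative via corestriction to the image) is the same computation in disguise, so the conceptual gain is mainly that naturality in $M$ comes for free. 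Where you go beyond the paper is the ``consequently'' clause: the paper dismisses it with the one-line observation $c_p\coloneqq x_{P\setminus\{p\}}$, while you actually construct the inverse, extending a cycle to a compatible family by $x_{P\setminus\{p_1,\dots,p_j\}}\coloneqq c_{p_1}\delta^{p_2}\cdots\delta^{p_j}$ and proving well-definedness by playing the cycle equation off against $\delta^a\delta^b=\delta^b\delta^a$ via transpositions; this, together with your explicit handling of the degenerate case $k=1$ (no cycle equation, no compatibility condition), is a legitimate and complete filling-in of a step the paper leaves implicit.
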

\begin{proof}
    Due to \Cref{prop:characterizationOfSkeletality}, $\sk_l(\Delta^k)(A)$ may be identified with the set of functions $ P \xleftarrow{\alpha} A$ with $|\Image(\alpha)|\leq l+1$.
    Given a symmetric map $f\colon \sk_l \Delta^k \to M$, we can construct a family $\{x_S \in M(S)\}_{S\in \Pow_{\leq l+1}(P)}$ by
    \[
    x_S \coloneqq f_S (P \leftarrowtail S) \in M(S).
    \]
    The naturality of $f$ implies that the resulting family satisfies the required compatibility condition.

    
    Conversely, given an arbitrary family $\{x_S \in M(S)\}_{S\in \Pow_{\leq l+1}(P)}$ that satisfies the compatibility condition, we define a symmetric map $f\colon \sk_l \Delta^k \to M$ by
    \[
    f_A (P \xleftarrow{\alpha} A) \coloneqq x_{\Image(\alpha)}*(\Image(\alpha) \xtwoheadleftarrow{\alpha} A).
    \]
    It is straightforward to verify that this indeed yields a symmetric map, and that it is moreover inverse to the construction in the previous paragraph.
    The naturality is also clear.

    The last statement is a consequence of the observation that, in the case $l=k-1$, the compatible families $\{x_S \in M(S)\}_{S\in \Pow_{\leq k}(P)}$ are in bijection with the $k$-cycles via $c_p \coloneqq x_{P\setminus \{p\}}$.
\end{proof}

\begin{proposition}[Coskeletality]\label{prop:CharacterizationOfCoskeletality}
    Let $M$ be a symmetric set and $l$ be a non-negative integer.
    Then the following are equivalent.
    \begin{itemize}
        \item[(1)] $M$ is $l$-coskeletal.
        \item[(2)] For any positive integer $k$ greater than $l$, every $k$-cycle in $M$ has a unique filler.
    \end{itemize}
\end{proposition}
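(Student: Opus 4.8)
The plan is to reduce $l$-coskeletality to a concrete bijectivity statement about restriction maps, and then to prove the equivalence by induction on dimension; the crux is that in the smallest relevant dimension these restriction maps literally coincide with the cycle-filling maps.

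Fix a non-empty finite set $P$, put $k:=|P|-1$ and take $\Delta^k=\F(-,P)$. Write $\mathcal{F}_l(P)$ for the set of compatible families $\{x_S\in M(S)\}_{S\in\Pow_{\le l+1}(P)}$ of \Cref{lem:UniversalityOfSkDelta}, and $\rho_P\colon M(P)\to\mathcal{F}_l(P)$ for the map $f\mapsto\{f\cdot(P\hookleftarrow S)\}_S$. Since $l_!\dashv l^*\dashv l_*$ is an adjoint triple with $\sk_l=l_!l^*$ and $\cosk_l=l_*l^*$, composing its two adjunctions gives a natural isomorphism $\PSh(\F)(\sk_l X,Y)\cong\PSh(\F)(X,\cosk_l Y)$; specialising to $X=\Delta^k$, $Y=M$ and combining with \Cref{lem:UniversalityOfSkDelta} and the Yoneda lemma identifies $(\cosk_l M)(P)$ with $\mathcal{F}_l(P)$ so that the unit component $M(P)\to(\cosk_l M)(P)$ becomes $\rho_P$ (a routine check via the triangle identities). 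As a morphism of presheaves on $\F$ is invertible iff it is so on each non-empty finite set, (1) amounts to the bijectivity of $\rho_P$ for all $P$; and for $|P|\le l+1$ this is automatic, since then $P\in\Pow_{\le l+1}(P)$ and a compatible family is determined by its value $x_P$. So \emph{(1) is equivalent to: $\rho_P$ is a bijection whenever $|P|>l+1$}. On the other hand, \Cref{lem:UniversalityOfSkDelta} shows a cycle on $P$ is exactly a compatible family indexed by the proper non-empty subsets of $P$, and $f\in M(P)$ fills such a cycle precisely when its face family $\{f\delta^p\}_{p\in P}$ equals the cycle; so, writing $\sigma_P\colon M(P)\to\{\text{cycles on }P\}$ for $f\mapsto\{f\delta^p\}_p$, \emph{(2) is equivalent to: $\sigma_P$ is a bijection whenever $|P|>l+1$}. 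The key coincidence is that when $|P|=l+2$ the index set $\Pow_{\le l+1}(P)$ is precisely the set of proper non-empty subsets of $P$, so $\mathcal{F}_l(P)$ is the set of cycles on $P$ and $\rho_P=\sigma_P$; this is the base case of the induction.

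For (1)$\Rightarrow$(2), assume all $\rho_P$ bijective. Then $\sigma_P$ is injective, because two $M$-structures on $P$ with the same faces have the same restrictions to all $S\in\Pow_{\le l+1}(P)$, hence the same $\rho_P$-image; and $\sigma_P$ is surjective, because given a cycle on $P$ one lifts its restriction to $\Pow_{\le l+1}(P)$ through $\rho_P$ to get some $f$, and then checks face by face that $f$ realises the cycle, using injectivity of $\rho_{P\setminus\{p\}}$ (automatic when $|P|=l+2$). For (2)$\Rightarrow$(1), induct on $|P|$, with the base case $|P|=l+2$ given by $\rho_P=\sigma_P$. For $|P|\ge l+3$, assume $\rho_Q$ is bijective for all $Q$ with $l+1<|Q|<|P|$. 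Given $\{x_S\}_{S\in\Pow_{\le l+1}(P)}\in\mathcal{F}_l(P)$, define for each proper $T\subseteq P$ with $|T|>l+1$ the structure $x_T\in M(T)$ to be the unique $\rho_T$-preimage of the family restricted to $\Pow_{\le l+1}(T)$; one verifies that the enlarged family $\{x_T\}_T$ over all proper non-empty $T\subseteq P$ is still compatible (comparing restrictions to subsets of size $\le l+1$ and invoking injectivity of the relevant lower-dimensional $\rho$'s), so it is a cycle on $P$. Its unique filler $f$, supplied by (2), satisfies $\rho_P(f)=\{x_S\}_S$, whence $\rho_P$ is surjective; and $\rho_P$ is injective because two $M$-structures agreeing on all $S\in\Pow_{\le l+1}(P)$ agree on every face (by bijectivity of $\rho_{P\setminus\{p\}}$), so are fillers of a common cycle and hence equal by (2).

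The step I expect to be the main obstacle is the compatibility verification in the inductive step of (2)$\Rightarrow$(1): producing the structures $x_T$ is immediate, but establishing $x_T\cdot(T\hookleftarrow T')=x_{T'}$ for every nested pair $T'\subseteq T$ is exactly where both the existence and the uniqueness of preimages under the lower-dimensional $\rho$'s are genuinely needed. The remaining care is bookkeeping — handling the ranges $|P|=l+1$ and $|P|=l+2$ uniformly, and setting up the first paragraph's identification precisely enough that the unit of $\cosk_l$ is, on the nose, the restriction map $\rho_P$.
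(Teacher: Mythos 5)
Your proposal is correct and follows essentially the same route as the paper: both translate $l$-coskeletality and cycle-filling into statements about restricting an $M$-structure to a compatible family, via \Cref{lem:UniversalityOfSkDelta} and the adjunction identifying maps out of $\sk_l\Delta^k$ with maps into $\cosk_l M$, and both conclude by building structures up from small subsets. The differences are only organisational: the paper gets (1)$\Rightarrow$(2) from the observation that $l$-coskeletality implies $(k-1)$-coskeletality and extends families size-by-size within a fixed simplex, whereas you argue element-wise with the restriction maps $\rho_P$ and induct on $|P|$ — the same content, repackaged.
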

\begin{proof}
We consider two additional conditions $(1'), (2')$ as follows.
\begin{itemize}
    \item[$(1')$] $M$ is right orthogonal to the counit $\epsilon_{\Delta^n} \colon \sk_l \Delta^n \rightarrowtail \Delta^n$ for any non-negative integer $n\geq 0$.
    \item[$(2')$] $M$ is right orthogonal to the counit $\epsilon_{\Delta^k} \colon \sk_{k-1} \Delta^k \rightarrowtail \Delta^k$ for any $k> l$.
\end{itemize}
We have $(1) \iff (1')$ because
\begin{align*}
    (1) & \iff \text{the unit }\eta_M \colon M \to\cosk_l M \text{ is an isomorphism}\\
    & \iff \text{for any } n\geq 0, \; (\eta_M)_* \colon \PSh(\F)(\Delta^n, M) \to \PSh(\F)(\Delta^n, \cosk_l M) \text{ is a bijection}\\
    & \iff \text{for any } n\geq 0, \; (\epsilon_{\Delta^{n}})^* \colon \PSh(\F)(\Delta^n, M) \to \PSh(\F)(\sk_l \Delta^n, M) \text{ is a bijection}\\
    & \iff (1').
\end{align*}
The equivalence $(2) \iff (2')$ follows from \Cref{lem:UniversalityOfSkDelta}.

    Now we prove $(1) \implies (2')$.
    Assume $(1)$ and fix $k > l$.
    Note that, since we have the inclusion $\F_{l}\subset\F_{k-1}$, the $l$-coskeletal symmetric set $M$ is also $(k-1)$-coskeletal.
    In particular, using an instance of the equivalence $(1) \iff (1')$, we can deduce that $M$ is right orthogonal to the counit $\epsilon_{\Delta^k} \colon \sk_{k-1} \Delta^k \rightarrowtail \Delta^k$ as desired.

    It remains to prove $(2')\implies (1')$.
    So assume $(2')$ and consider a symmetric map $\sk_l \Delta^n \to M$ with $n > l$ (note that $(1')$ is trivial in the case $n \le l$).
    By \Cref{lem:UniversalityOfSkDelta}, we may identify this symmetric map with a compatible family $\{x_{S}\}_{S\in \Pow_{\leq l+1} (P)}$ of $M$-structures where $P$ is some $(n+1)$-element set.
    The assumption $(2')$ in the case of $k=l+1$ implies that there is a unique extension of this compatible family to $\{x_{S}\}_{S\in \Pow_{\leq l+2} (P)}$ indexed by the non-empty subsets of $P$ of size at most $l+2$ (as opposed to $l+1$).
    By repeating this process, we eventually obtain a unique extension $\{x_{S}\}_{S\in \Pow_{\leq n+1} (P)}$, which corresponds to a symmetric map $\Delta^n = \sk_{n} \Delta^n \to M$.
\end{proof}

\begin{remark}[Combinatorial intuition of coskeletality]
    To clarify the intuition behind coskeletality, which may be somewhat difficult to grasp, let us give an informal explanation.
    A symmetric set $M$ is $l$-coskeletal if, for
    \begin{itemize}
        \item any non-empty finite set $A$ and
        \item any compatible family of $M$-structures on all subsets of $A$ with at most $l+1$ elements,
    \end{itemize} 
    there exists a unique way to glue them together to an $M$-structure on the entire set $A$.
    This is an example of the sheaf
    condition. 
    In other words, the $l$-coskeletality of $M$ means that (both the existence and the equality of) $M$-structures on $A$ are completely determined by their \demph{substructures} on subsets $B \subset A$ of size at most $l+1$, while the $l$-skeletality means that they are determined by their \demph{quotient structures}.

    For certain examples, it makes sense to think of the coskeletality as measuring how many variables are needed in order to describe what an $M$-structure is.
    For instance, the symmetric set $\Eq$ of equivalence relations is $2$-coskeletal (but not $1$-coskeletal), reflecting the fact that the transitivity condition $x\sim y \land y \sim z \implies x\sim z$ involves $(2+1) = 3$ variables.
    Another such example is the representable $\Delta^n$, which is the symmetric set of $\{0,1, \dots , n\}$-valued functions (without any additional structures nor conditions).
    Since a function is completely determined by its values at individual points, this symmetric set is $0$-coskeletal.
\end{remark}

\subsection{Lower bound for the Aufhebung}

\begin{definition}[Aufhebung]\label{def:Aufhebung}
    For a level of the topos $\PSh(\F)$ labelled by $l \in \{-\infty , 0,1, \dots , \infty\}$, its \demph{Aufhebung} is the least level $l\leq a_l\in \{-\infty , 0,1, \dots , \infty\}$ such that every $l$-skeletal symmetric set is $a_l$-coskeletal.
\end{definition}


Let us recall the first two
Aufhebung $a_{-\infty}$ and $a_{0}$, which are already known. See \cite[Section 2]{lawvere2006some} for more explanations and motivating ideas around the Aufhebung $a_{-\infty}$ and $a_{0}$.
\begin{proposition}\label{prop:AufhebungMinusInfty}
    $a_{-\infty}=0$.
\end{proposition}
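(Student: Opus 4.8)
The plan is to unwind the definitions and reduce the statement $a_{-\infty} = 0$ to two claims: first, that $0$ is a level way above $-\infty$, i.e., every $(-\infty)$-skeletal symmetric set is $0$-coskeletal; and second, that no smaller level works, which is automatic since $-\infty$ is the bottom of the lattice and $a_{-\infty} \geq -\infty$ by definition, so the only thing to check is that $a_{-\infty} \neq -\infty$, equivalently that not every $(-\infty)$-skeletal symmetric set is $(-\infty)$-coskeletal.

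First I would identify the $(-\infty)$-skeletal symmetric sets concretely. The level $-\infty$ corresponds to the empty subcategory $\F_{-\infty}$, so $\sk_{-\infty} M$ is the initial presheaf and the counit $\sk_{-\infty} M \to M$ is invertible precisely when $M$ is the initial (empty) symmetric set, i.e., $M(A) = \emptyset$ for all $A$. Alternatively, via \Cref{prop:characterizationOfSkeletality}, $M$ is $(-\infty)$-skeletal iff every $M$-structure has mass $\leq 0$, which is impossible, so again $M$ must be empty. Hence there is exactly one $(-\infty)$-skeletal symmetric set.

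Next I would check that the empty symmetric set is $0$-coskeletal. Using \Cref{prop:CharacterizationOfCoskeletality}, it suffices to show every $k$-cycle in the empty symmetric set has a unique filler for every $k \geq 1$; but a $k$-cycle requires $M$-structures $c_p \in M(P\setminus\{p\})$, and $M(P\setminus\{p\}) = \emptyset$, so there are no $k$-cycles at all, and the condition holds vacuously. Therefore the empty symmetric set is $0$-coskeletal, giving $a_{-\infty} \leq 0$.

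Finally, to see $a_{-\infty} \geq 0$ (i.e.\ it is not $-\infty$), I note that $-\infty$-coskeletality would force $M \to \cosk_{-\infty} M$ to be invertible, but $\cosk_{-\infty} M$ is the terminal presheaf (right Kan extension from the empty category), which is nonempty, so the empty symmetric set is not $(-\infty)$-coskeletal. Combining the two bounds yields $a_{-\infty} = 0$. I expect no real obstacle here; the only subtlety is making sure the edge cases ($M$ empty, no cycles, Kan extensions from the empty category) are handled cleanly rather than hand-waved, and that the definition of Aufhebung as the \emph{least} level way above $-\infty$ is matched against the total order of \Cref{prop:Levels} so that $0$ is indeed the successor of $-\infty$.
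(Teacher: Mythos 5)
Your proposal is correct and follows essentially the same route as the paper: identify the unique $(-\infty)$-skeletal symmetric set as the empty one, observe it is $0$-coskeletal (you verify this via the cycle criterion, the paper simply asserts it) but not $(-\infty)$-coskeletal since $\cosk_{-\infty}$ yields the terminal presheaf. The extra care you take with the vacuous cases and the least-level bookkeeping is exactly what the paper's terser proof leaves implicit.
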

\begin{proof}
    A symmetric set $M$ is $(-\infty)$-skeletal if and only if $M$ is the initial object, i.e., the empty symmetric set. Since the empty symmetric set is $0$-coskeletal but not $(-\infty)$-coskeletal, we have $a_{-\infty}=0$. 
\end{proof}

More generally, if $\E_{\lnot \lnot}$ is a level of $\E$, then it coincides with the Aufhebung $a_{-\infty}$ since it is the least dense subtopos
(see \cite[A.4.5]{johnstone2002sketchesv1}).

\invmemo{cite: level 0 = double negation topology. In Lawvere's terminology, $a_{-\infty}$ is \dq{becoming}, which is the Aufhebung of \dq{nothing $\emptyset$} and \dq{pure being $1$}.
\href{https://ncatlab.org/nlab/show/dense+subtopos\#relation_to_aufhebung}{[nLab: dense subtopos relation to aufhebung]}}

The value of the Aufhebung $a_0$ was first established in \cite{lawvere1988toposesGenerated}.
The paper \cite{menni2019monic} contains a detailed explanation on ``one-dimensionality'' and also on the particular example of the topos $\PSh(\F)$.

\invmemo{cite Menni: monic skeleta paper\cite{menni2019monic} and Lawvere's \cite{lawvere2006some}}

\begin{proposition}[\cite{lawvere1988toposesGenerated}, \cite{menni2019monic}]\label{prop:AufhebungZero}
     $a_{0}=1$.
\end{proposition}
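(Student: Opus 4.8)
The plan is to prove $a_0 = 1$ in two halves: first that $a_0 \geq 1$, i.e.\ there exists a $0$-skeletal symmetric set that is not $0$-coskeletal, and second that $a_0 \leq 1$, i.e.\ every $0$-skeletal symmetric set is $1$-coskeletal. For the lower bound, by \Cref{prop:characterizationOfSkeletality} a symmetric set is $0$-skeletal precisely when every structure has mass $\leq 1$; the representable $\Delta^1 = \F(-,\{0,1\})$ has this property (every function to a two-element set has image of size at most $2$, but a function that is \emph{surjective} onto $\{0,1\}$ has a degenerate structure, so the non-degenerate ones have mass $1$). Yet $\Delta^1$ is $0$-coskeletal by the remark on combinatorial intuition (it is the symmetric set of $\{0,1\}$-valued functions, determined pointwise), so it will not separate levels $0$ and $1$. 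Instead I would use a genuinely rank-$1$ example such as $\Delta^2$ with its non-degenerate $2$-simplex, or better the symmetric set $\Delta^1$ fails, so one wants an object like the ``generic edge'' glued nontrivially: concretely, take $\Eq$ restricted or a quotient, but cleanest is to observe that $\sk_0 \Delta^2$ is $0$-skeletal and the unique-filler property fails for the $2$-cycle whose three edges come from the three vertices of $\Delta^2$ — actually the cleanest witness is to exhibit a $0$-skeletal $M$ with a $1$-cycle (equivalently a pair of $M$-structures on singletons) admitting zero or multiple fillers on a two-element set, contradicting $0$-coskeletality via \Cref{prop:CharacterizationOfCoskeletality}.

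For the upper bound, which is the substantive direction, I would fix a $0$-skeletal symmetric set $M$ and prove it is $1$-coskeletal using the cycle criterion of \Cref{prop:CharacterizationOfCoskeletality}: I must show every $k$-cycle in $M$ with $k \geq 2$ has a unique filler. The key structural input is that $0$-skeletality forces $\mass(x) \leq 1$ for every $M$-structure $x$, which by \Cref{lem:CriterionOfEZDecomposition} means every $x \in M(A)$ has an EZ-decomposition $(\alpha \colon A \twoheadrightarrow \{*\}, y)$ with $y \in M(\{*\})$ non-degenerate — in other words, $M(A)$ is in natural bijection with $M(\{*\})$ for every non-empty $A$, so $M$ is (isomorphic to) a \emph{constant} presheaf on a set $M(\{*\})$. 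Then given a $k$-cycle $(P, \{c_p\})$ with $k \geq 2$, each $c_p \in M(P \setminus \{p\})$ corresponds to a single element $\bar{c}_p \in M(\{*\})$; the cycle equation $c_p\delta^q = c_q\delta^p$ together with constancy forces all $\bar c_p$ to be equal to a common element $e$ (here one uses $k \geq 2$ so that $P \setminus \{p,q\}$ is non-empty, hence the restriction maps are the identity on $M(\{*\})$ and the equation literally says $\bar c_p = \bar c_q$), and the unique filler is the element of $M(P)$ corresponding to $e$. Uniqueness is immediate because $M(P) \cong M(\{*\})$ and the filler condition pins down the image.

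The main obstacle I anticipate is not the upper bound — once one recognises that $0$-skeletal symmetric sets are exactly constant presheaves, everything collapses to the triviality that constant presheaves satisfy every sheaf-type gluing condition for non-empty covers — but rather nailing the \emph{lower bound} with a correct, minimal example and a clean verification that it is $0$-skeletal yet not $0$-coskeletal. The subtlety is that the obvious candidates ($\Delta^0$, $\Delta^1$) are already $0$-coskeletal, so one needs an object whose structures are determined by quotients but not by singleton substructures; I would look for the smallest such, presumably built from $\sk_0$ of something or a two-element constant presheaf $M$ with $|M(\{*\})| = 2$ where, for a $1$-cycle given by the two different elements $e_0, e_1$ on the two singletons of a $2$-element set $P$, the cycle has \emph{no} filler (a filler would restrict to a single common element on both singletons), witnessing the failure of $0$-coskeletality. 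Verifying that this constant presheaf on a $2$-element set is $0$-skeletal (every structure has mass $1$ since the terminal map to $\{*\}$ exhibits the decomposition and no non-invertible surjection can be relevant as the value set is discrete) but not $0$-coskeletal (the cycle just described, with $k = 1 > 0$) is the step requiring care, after which \Cref{prop:CharacterizationOfCoskeletality} does the rest. Combining the two halves gives $a_0 = 1$.
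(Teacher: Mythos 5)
Your final argument is correct and is essentially the paper's: both identify the $0$-skeletal symmetric sets with the discrete (constant) presheaves, both exhibit the failure of $0$-coskeletality on a two-point discrete symmetric set via a $1$-cycle with distinct values (which has no filler), and both observe that discreteness makes $1$-coskeletality trivial. The only real difference is how the upper bound is verified: the paper checks the orthogonality form of \Cref{prop:CharacterizationOfCoskeletality}, noting that a symmetric map into a discrete object is constant on connected components and that $\sk_1\Delta^n \to \Delta^n$ induces a bijection on connected components, whereas you check the cycle-filling form directly, using that all restriction maps of a constant presheaf are bijections so that for $k\ge 2$ the cycle equations force all $\bar c_p$ to coincide and the filler is unique; both verifications are one-liners resting on the same characterisation of $0$-skeletal objects. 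One caveat: your exploratory first paragraph contains false side claims that should be deleted --- $\Delta^1$ is \emph{not} $0$-skeletal (a surjection $A\twoheadrightarrow\{0,1\}$ is a non-degenerate structure of mass $2$, not a degenerate one), and the alleged $2$-cycle in $\sk_0\Delta^2$ ``coming from the three vertices'' does not satisfy the cycle equations --- but since you discard those candidates and settle on the two-element constant presheaf with a $1$-cycle, the final proof is unaffected.
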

\begin{proof}
    A symmetric set $M$ is $0$-skeletal if and only if $M$ is a small coproduct of copies of the terminal object, i.e., if and only if $M$ is discrete.
    We can easily see that the two-point discrete symmetric set is not $0$-coskeletal, which implies $a_0\geq 1$.
    It suffices to prove that any discrete $M$ is $1$-coskeletal.
    Since any symmetric map with a discrete codomain must be constant on each connected component, the required orthogonality follows from the fact that $\sk_1\Delta^n \to \Delta^n$ induces a bijection between their connected components.
\end{proof}


In the rest of the present subsection, we will give a lower bound for the Aufhebung.

\begin{definition}[$\Eq_{\le l}$ and $\Eq_{=l+1}$]
    For $l \ge 1$, we define the symmetric set $\Eq_{\le l}$ to be the $(l-1)$-skeleton of $\Eq$ (See \Cref{exmp:Equiv})
    \[
    \Eq_{\le l} \coloneqq \sk_{l-1} \Eq,
    \]
    and the symmetric set
    $\Eq_{=l+1}$ by the pushout diagram
    \[
    \begin{tikzcd}
         \Eq_{\le l} \ar[r, rightarrowtail ]\ar[d, twoheadrightarrow, "!"]&\Eq_{\le l+1}\ar[d, twoheadrightarrow]\\
         1 \ar[r] &\Eq_{=l+1}.\ar[lu, very near start, phantom, "\ulcorner"]
    \end{tikzcd}
    \]
\end{definition}
More concretely,
$\Eq_{\leq l}(A)$ is the set of all partitions of $A$ into at most $l$ non-empty parts,
\[
\Eq_{\le l}(A) = \bigl\{{\sim}\in \Eq(A)\mid |A/{\sim}|\le l \bigr\}.
\]
Similarly, $\Eq_{=l+1}(A)$ is the set of all partitions of $A$ into exactly $l+1$ non-empty parts, with an \dq{error message} $\ast$:
\[
\Eq_{=l+1}(A) = \bigl\{{\sim}\in \Eq(A)\mid |A/{\sim}|= l+1 \bigr\} \sqcup \{\ast\}.
\]
If pulling back an equivalence relation results in fewer than $l+1$ parts then it instead returns an error message, and pulling back an error message also results in an error message.

It is easy to see that $\Eq_{=l+1}$ is $l$-skeletal, and the following unfillable cycle provides the desired lower bound.



\begin{lemma}\label{lem:LowerBoundcycle}
    Let $l \ge 1$.
    Then $\Eq_{=l+1}$ has an unfillable $k$-cycle where
    \[
    k =
    \begin{cases}
        2 &(l=1)\\
        4 &(l=2)\\
        2l-1&(l\geq 3).
    \end{cases}
    \]
\end{lemma}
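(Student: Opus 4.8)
The plan is to construct, for each $l \ge 1$, an explicit unfillable $k$-cycle in $\Eq_{=l+1}$ with the prescribed value of $k$. By \Cref{def:cycle}, I need a $(k+1)$-element index set $P$ together with $M$-structures $c_p \in \Eq_{=l+1}(P \setminus \{p\})$ satisfying the cycle equation, such that no $f \in \Eq_{=l+1}(P)$ restricts to all the $c_p$. The idea is to choose the $c_p$ to be genuine partitions (not the error message $\ast$) of the $k$-element sets $P \setminus \{p\}$, each into exactly $l+1$ parts, chosen so consistently that they do come from a single equivalence relation on $P$ — but one whose quotient has \emph{too few} parts (namely exactly $l$, not $l+1$), so that the only candidate filler on $P$ would evaluate to $\ast$, which does not restrict correctly to the non-error $c_p$'s; and I must also rule out $f = \ast$ itself, since $\ast$ pulls back to $\ast \ne c_p$. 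So the whole point is to exhibit a partition of $P$ into exactly $l$ parts each of which is large enough that deleting any single point still leaves $l$ parts — yet somehow the $c_p$ want $l+1$ parts. The resolution is that the $c_p$ are \emph{not} all restrictions of one partition into $l$ parts; instead each $c_p$ will be a partition of $P\setminus\{p\}$ into $l+1$ parts, and the cycle equation forces these to be mutually compatible on the doubly-punctured sets, but no global partition of $P$ simultaneously extends all of them.

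Concretely I would build the cycle from a partition-with-a-"splittable" block. Take $P$ to have $l - 2$ singleton blocks $\{a_1\}, \dots, \{a_{l-2}\}$ plus a remaining block $B$ of the appropriate size, so that $|P| = k+1$ and $|B| = k+1-(l-2) = k-l+3$; with $k = 2l-1$ this gives $|B| = l+2$. On $P \setminus \{p\}$ define $c_p$ by keeping the singletons and splitting the (punctured) block $B \setminus \{p\}$ into exactly two parts — chosen via a cyclic pattern on $B$ so that the cycle equation $c_p \delta^q = c_q \delta^p$ holds on $P \setminus \{p,q\}$: this is where $|B| = l+2$ and the "cyclic" structure are calibrated. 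Each $c_p$ then has $(l-2) + 2 = l$ singleton/small parts from outside plus... no: I want $l+1$ parts total, so $B$ should split into $3$ parts, forcing $|B|$ and hence $k$ to be larger — I would recompute: $(l-2) + 3 = l+1$ parts, and a $3$-part cyclically-coherent splitting of $B \setminus \{p\}$ with the cycle equation satisfied needs $|B|$ around $l+2$ or so, which after the arithmetic $k+1 = (l-2)+|B|$ lands on $k = 2l-1$. The special small cases $l = 1, 2$ (where there are no or too few singletons) have to be handled by hand with $k = 2$ and $k = 4$, essentially the bottom of the same family.

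After writing down $P$, the $c_p$, and verifying the cycle equation (a direct check: both sides are the same partition of $P \setminus \{p,q\}$, visible from the cyclic recipe), the heart of the argument is \textbf{unfillability}. Suppose $f \in \Eq_{=l+1}(P)$ with $f \delta^p = c_p$ for all $p$. If $f = \ast$ then $f\delta^p = \ast \ne c_p$, contradiction, so $f$ is an honest partition $\sim$ of $P$ into exactly $l+1$ parts. Now I argue that deleting a single point from a partition of a $(k+1)$-set into $l+1$ parts can only \emph{increase} the number of points needed, not the count of parts — more precisely, restricting along $\delta^p$ either keeps $l+1$ parts or drops to $l$ (the latter exactly when $\{p\}$ was a block of $\sim$). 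Since every $c_p$ has exactly $l+1$ parts, no $\{p\}$ is a block of $\sim$, so $\sim$ restricted to $P\setminus\{p\}$ literally agrees with $\sim$ on those points. Then compatibility of all the $c_p$ would force $\sim$ to simultaneously be each prescribed partition-restriction; but the cyclic recipe was designed so that the $c_p$ are \emph{not} coherent in this strong sense — e.g. two of them disagree on which part a given pair of points of $B$ lies in once you try to patch over the deleted point — yielding a contradiction. Assembling this disagreement cleanly is the main obstacle: I expect to prove it by a counting/Pigeonhole-style argument on the blocks of $\sim \cap B$, showing that a single equivalence relation on the $(l+2)$-element $B$ cannot induce the required $(l+1)$-many cyclically-shifted $3$-part (or $2$-part in the punctured versions) partitions, since that would over-determine the block containing a fixed pair. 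I would isolate this combinatorial core as the key computation and present the cycle-equation check and the error-message bookkeeping as routine verifications preceding it.
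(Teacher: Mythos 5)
There is a genuine gap: the construction on which everything depends --- the ``cyclic recipe'' for splitting the big block $B$ --- is never written down, and the two places where the real work lies (checking the cycle equations for that recipe, and the ``counting/Pigeonhole-style'' unfillability argument) are explicitly deferred. As it stands this is a plan rather than a proof, and the plan has concrete unresolved problems. Your block structure does not even define $c_p$ when the deleted point is one of the singleton points $a_i$: in that case $B\setminus\{p\}=B$ still has $l+2$ elements, so reaching exactly $l+1$ classes would require a \emph{four}-part splitting of $B$, breaking the pattern; and your own arithmetic wobbles between a two-part and a three-part split with ``$|B|$ around $l+2$ or so''. Moreover, by insisting that the $c_p$ be honest partitions that agree as honest partitions on all doubly punctured sets, you make the cycle equations genuinely hard to arrange, and you give no evidence that a family with these properties exists. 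Your observation that a non-error filler $f$ must restrict literally to every $c_p$ (since $c_p\neq\ast$ forces no $\{p\}$ to be a block of $f$) is correct and is the right starting point, but the contradiction must come from an explicit family, which the proposal never produces.

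For comparison, the paper's construction for $l\geq 3$ is much simpler and uses the error element $\ast$ as a feature rather than avoiding it: take $P=A\sqcup B$ with $|A|=|B|=l$, and let $c_p$ be the partition of $P\setminus\{p\}$ whose only non-trivial class is $A\setminus\{p\}$ (if $p\in A$) or $B\setminus\{p\}$ (if $p\in B$), all other points being singletons. The cycle equations are then immediate: for $p,q$ in the same part both restrictions are visibly the same partition, while for $p\in A$, $q\in B$ both $c_p\delta^q$ and $c_q\delta^p$ equal $\ast$, so nothing needs to be matched. Unfillability is a one-line witness: for distinct $p_1,p_2,p_3\in A$ and $q\in B$, the pair $p_1,p_2$ is $c_{p_3}$-related but not $c_q$-related, and $\ast$ itself is never a filler since $\ast\delta^p=\ast\neq c_p$. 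The small cases $l=1$ (discrete relations on a $3$-element set, $k=2$) and $l=2$ (the pair $\{p-1,p+1\}$ in $\Z/5\Z$, $k=4$) are likewise handled by short explicit constructions, not left ``by hand''. To repair your proposal you would need to either specify and verify your cyclic recipe in full (including the singleton-deletion case) or switch to a construction of this simpler kind.
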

\begin{proof}
    First, we construct an unfillable $2$-cycle on an $3$-element set $P$. (This proves the cases $l=1$.) For each $p\in P$, we define $c_p$ as the discrete equivalence relation on $P \setminus \{p\}$. 
    Since there are exactly $2$ equivalence classes, we may regard $c_p$ as an element of $\Eq_{=2}\bigl(P \setminus \{p\}\bigr)$.
    These $\Eq_{=2}$-structures clearly satisfy the cycle equations. 
    This cycle is unfillable because the only equivalence relation on $P$ that pulls back to $c_p$ for all $p \in P$ is the discrete one, which has $3$ equivalence classes.
    
    Next, we construct an unfillable $4$-cycle on the $5$-element set $P= \Z/5\Z$. (This proves the cases $l=2$.) 
    For each $p\in \Z/5\Z$, define an equivalence relation $c_p$ on $P \setminus \{p\}$ by
    \[
    ac_p b \iff (a+1=p=b-1) \lor (b+1=p=a-1) \lor (a=b).
    \]
    In other words, $c_p$ partitions $P \setminus \{p\}$ into a single two-element set $\{p-1,p+1\}$ and $3$ singletons.
    Since there are exactly $4$ equivalence classes, we may regard $c_p$ as an element of $\Eq_{=4}\bigl(P \setminus \{p\}\bigr)$.
    These $\Eq_{=4}$-structures satisfy the cycle equations because, for distinct $p,q \in \Z/5\Z$,
    \begin{itemize}
        \item if $p = q \pm 1$ then both $c_p\delta^q$ and $c_q\delta^p$ partition $P \setminus \{p,q\}$ into $l+1$ singletons, and
        \item otherwise both $c_p\delta^q$ and $c_q\delta^p$ are the error message $\ast$.
    \end{itemize} 
    This cycle is unfillable because (under the obvious abuse of notation) $0,2 \in P$ are $c_1$-related but not $c_3$-related.

    Lastly, we construct an unfillable $(2l-1)$-cycle on a $2l$-element set $P$ for $l\geq 3$. Let us fix a decomposition $P = A\sqcup B$ with $|A|=|B|=l$. For $p \in P$, define an equivalence relation $c_p$ on $P \setminus \{p\}$ by
    \[
    xc_py \iff (x\in A \land y\in A \land p\in A) \lor  (x\in B \land y\in B \land p\in B) \lor (x=y). 
    \]
    In other words, $c_p$ partitions $P \setminus \{p\}$ into
    \begin{itemize}
        \item a single $(l-1)$-element set $A \setminus \{p\}$ and $l$ singletons if $p \in A$, and 
        \item a single $(l-1)$-element set $B \setminus \{p\}$ and $l$ singletons if $p \in B$.
    \end{itemize}
    These $\Eq_{=l+1}$-structures satisfy the cycle equations because, for distinct $p,q \in P$,
    \begin{itemize}
        \item if $p,q \in A$ then both $c_p\delta^q$ and $c_q\delta^p$ partition $P \setminus \{p,q\}$ into a single $(l-2)$-element set $A \setminus \{p,q\}$ and $l$ singletons,
        \item similarly in the case $p,q \in B$, and
        \item otherwise (that is, if either $p \in A$ and $q \in B$, or $p \in B$ and $q \in A$) both $c_p\delta^q$ and $c_q\delta^p$ are the error message $\ast$.
    \end{itemize}
    This cycle is unfillable because, if we take distinct $p_1,p_2,p_3 \in A$ and $q \in B$, then $p_1$ and $p_2$ are $c_{p_3}$-related but not $c_q$-related.
\end{proof}

These cycles will be visualised in \Cref{fig:ReductionGraphOfLowerBound} after defining the notion of reduction graph (\Cref{def:reductionGraph}).

\begin{remark}[Comparison with \cite{kennett2011levels}]
    The family of unfillable cycles for $l\geq 3$ in the proof of \Cref{lem:LowerBoundcycle} is inspired by \cite[Example 3.20]{kennett2011levels}.
    More precisely, our construction essentially extracts the (simplicial version of the) EZ-congruence from that example.
\end{remark}

\begin{proposition}[Lower bound]\label{prop:LowerBound}
    For $l \ge 1$, we have
    \[
    a_l \geq
    \begin{cases}
        2&(l= 1)\\
        4&(l= 2)\\
        2l-1&(l\geq 3).
    \end{cases}
    \]
\end{proposition}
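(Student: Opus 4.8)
The plan is to deduce \Cref{prop:LowerBound} directly from the two characterisations already established, namely \Cref{prop:characterizationOfSkeletality} (skeletality in terms of mass) and \Cref{prop:CharacterizationOfCoskeletality} (coskeletality in terms of cycle filling), together with the explicit unfillable cycles produced in \Cref{lem:LowerBoundcycle}. Concretely, fix $l\ge 1$ and let $k$ be the integer in the statement of \Cref{lem:LowerBoundcycle}, so that $k=2$ for $l=1$, $k=4$ for $l=2$, and $k=2l-1$ for $l\ge 3$. First I would observe that the symmetric set $\Eq_{=l+1}$ is $l$-skeletal: by \Cref{prop:characterizationOfSkeletality} this amounts to checking that every $\Eq_{=l+1}$-structure on a non-empty finite set $A$ has mass at most $l+1$, and this is immediate from the concrete description $\Eq_{=l+1}(A) = \{{\sim}\in\Eq(A)\mid |A/{\sim}|=l+1\}\sqcup\{\ast\}$ together with the fact that pulling back along a non-invertible surjection either decreases the number of classes (returning $\ast$) or returns $\ast$ — in either case the EZ-decomposition of any structure factors through a set of size at most $l+1$. (This step is asserted as "easy to see" in the excerpt just before \Cref{lem:LowerBoundcycle}, so I would simply invoke it.)

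Next I would use \Cref{lem:LowerBoundcycle} to obtain an unfillable $k$-cycle in $\Eq_{=l+1}$. Since $k\ge l+1$ in all three cases (for $l=1$, $k=2>1$; for $l=2$, $k=4>2$; for $l\ge 3$, $k=2l-1\ge l+1$ because $l\ge 2$), the contrapositive of the implication $(1)\implies(2)$ in \Cref{prop:CharacterizationOfCoskeletality} tells us that $\Eq_{=l+1}$ is \emph{not} $(k-1)$-coskeletal: condition $(2)$ fails at this particular value of $k$ because the cycle we have exhibited has no filler at all, let alone a unique one. Hence $\Eq_{=l+1}$ is an $l$-skeletal symmetric set that is not $(k-1)$-coskeletal, and by \Cref{def:Aufhebung} this forces $a_l\ge k$, which is exactly the claimed bound.

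The only genuinely delicate point is the comparison $k\ge l+1$ that makes \Cref{prop:CharacterizationOfCoskeletality} applicable (for $l=3$ one has $k=5=l+2$, and for $l=2$, $k=4=l+2$, so there is some slack, but for large $l$ the gap $k-(l+1)=l-2$ grows); I would also want to be careful that "not $(k-1)$-coskeletal" is genuinely what \Cref{def:Aufhebung} needs — that is, that $a_l$ is by definition the least level $a$ with \emph{every} $l$-skeletal symmetric set $a$-coskeletal, so a single witness $\Eq_{=l+1}$ that fails $(k-1)$-coskeletality rules out all of $-\infty,0,1,\dots,k-1$ as candidates for $a_l$. Everything else is a direct citation of the preceding results, so I do not expect any real obstacle; the proof is essentially a three-line assembly once \Cref{lem:LowerBoundcycle} is in hand.
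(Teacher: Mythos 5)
Your proposal is correct and is essentially the paper's own (implicit) argument: the proposition is treated as an immediate consequence of the $l$-skeletality of $\Eq_{=l+1}$ and the unfillable $k$-cycles of \Cref{lem:LowerBoundcycle}, read through \Cref{prop:CharacterizationOfCoskeletality}. (Your worry about $k\ge l+1$ is unnecessary: for any candidate level $m\le k-1$ the cycle dimension satisfies $k>m$, so the unfillable $k$-cycle directly rules out $m$-coskeletality.)
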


\section{Computing the Aufhebung}
We gave a lower bound for the Aufhebung in \Cref{prop:LowerBound}.
The aim of this section is to prove that the Aufhebung is bounded above by, and hence equal to, those values.

\subsection{Preparation: Propagative graphs}\label{subsection:PropagativeGraphs}
In this subsection, we introduce a purely graph-theoretic phenomenon (\Cref{prop:GraphCalculation}), which will eventually provide a sharp upper bound for the Aufhebung (\Cref{prop:ReductionGraphIsPropagative}).
Let us start by considering the following 
notion, which may be visualised as \Cref{fig:propagative graph}.
\begin{definition}
    Given an undirected graph $G = \bigl(V, E \subset \Pow_2(V)\bigr)$, we write $\Phi = \Phi_G \colon \Pow(V) \to \Pow(V)$ for the operation given by
    \[
    \Phi S = S \cup \bigl\{v \in V \mid \exists s_0, s_1 \in S\; (s_0 \neq s_1 \wedge \{s_0, v\},  \{s_1, v\}\in E)\bigr\}.
    \]
    We call $G$ \demph{propagative} if, for any $S \subset V$ with $|S|=2$,
    there exists $k \ge 1$ such that $\Phi^k S = V$.
\end{definition}

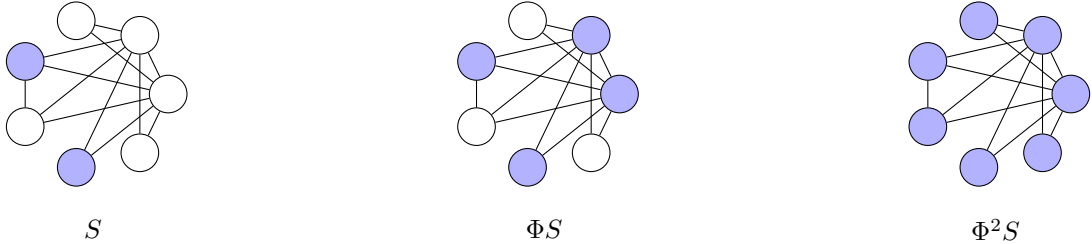
\begin{figure}[ht]
    \centering
    \begin{tikzpicture}[every node/.style={circle, draw, minimum size=0.5cm, inner sep=0}, node distance=2cm]

\begin{scope}[shift={(0, 0)}]
\node[fill=white] (0) at ({cos(360/7 * 0)},{sin(360/7 * 0)}) {};
\node[fill=white] (1) at ({cos(360/7 * 1)},{sin(360/7 * 1)}) {};
\node[fill=white] (2) at ({cos(360/7 * 2)},{sin(360/7 * 2)}) {};
\node[fill=blue!30] (3) at ({cos(360/7 * 3)},{sin(360/7 * 3)}) {};
\node[fill=white] (4) at ({cos(360/7 * 4)},{sin(360/7 * 4)}) {};
\node[fill=blue!30] (5) at ({cos(360/7 * 5)},{sin(360/7 * 5)}) {};
\node[fill=white] (6) at ({cos(360/7 * 6)},{sin(360/7 * 6)}) {};

\draw (0) -- (1);
\draw (0) -- (2);
\draw (0) -- (3);
\draw (0) -- (4);
\draw (0) -- (5);
\draw (0) -- (6);
\draw (1) -- (2);
\draw (1) -- (3);
\draw (1) -- (4);
\draw (1) -- (5);
\draw (1) -- (6); 
\draw (3) -- (4);

\node[draw = none] at (0,-1.8) {$S$};
\end{scope}

\begin{scope}[shift={(6, 0)}]
\node[fill=blue!30] (0) at ({cos(360/7 * 0)},{sin(360/7 * 0)}) {};
\node[fill=blue!30] (1) at ({cos(360/7 * 1)},{sin(360/7 * 1)}) {};
\node[fill=white] (2) at ({cos(360/7 * 2)},{sin(360/7 * 2)}) {};
\node[fill=blue!30] (3) at ({cos(360/7 * 3)},{sin(360/7 * 3)}) {};
\node[fill=white] (4) at ({cos(360/7 * 4)},{sin(360/7 * 4)}) {};
\node[fill=blue!30] (5) at ({cos(360/7 * 5)},{sin(360/7 * 5)}) {};
\node[fill=white] (6) at ({cos(360/7 * 6)},{sin(360/7 * 6)}) {};

\draw (0) -- (1);
\draw (0) -- (2);
\draw (0) -- (3);
\draw (0) -- (4);
\draw (0) -- (5);
\draw (0) -- (6);
\draw (1) -- (2);
\draw (1) -- (3);
\draw (1) -- (4);
\draw (1) -- (5);
\draw (1) -- (6); 
\draw (3) -- (4);

\node[draw = none] at (0,-1.8) {$\Phi S$};
\end{scope}

\begin{scope}[shift={(12, 0)}]
\node[fill=blue!30] (0) at ({cos(360/7 * 0)},{sin(360/7 * 0)}) {};
\node[fill=blue!30] (1) at ({cos(360/7 * 1)},{sin(360/7 * 1)}) {};
\node[fill=blue!30] (2) at ({cos(360/7 * 2)},{sin(360/7 * 2)}) {};
\node[fill=blue!30] (3) at ({cos(360/7 * 3)},{sin(360/7 * 3)}) {};
\node[fill=blue!30] (4) at ({cos(360/7 * 4)},{sin(360/7 * 4)}) {};
\node[fill=blue!30] (5) at ({cos(360/7 * 5)},{sin(360/7 * 5)}) {};
\node[fill=blue!30] (6) at ({cos(360/7 * 6)},{sin(360/7 * 6)}) {};

\draw (0) -- (1);
\draw (0) -- (2);
\draw (0) -- (3);
\draw (0) -- (4);
\draw (0) -- (5);
\draw (0) -- (6);
\draw (1) -- (2);
\draw (1) -- (3);
\draw (1) -- (4);
\draw (1) -- (5);
\draw (1) -- (6); 
\draw (3) -- (4);

\node[draw = none] at (0,-1.8) {$\Phi^2 S$};
\end{scope}

\end{tikzpicture}
    \caption{An example of a propagative graph}
    \label{fig:propagative graph}
\end{figure}

\begin{puzzle}\label{pzl}
    Fix $n> 2$.
    Find the minimum $m$ such that any undirected graph $G = (V,E)$ with
    \begin{itemize}
        \item $|V| = n$, and
        \item each $v \in V$ has degree at least $m$
    \end{itemize}
    is necessarily propagative.
\end{puzzle}

The following example demonstrates that such $m$ must be strictly greater than $\lfloor\frac{n}{2}\rfloor$.

\begin{example}\label{example}No bipartite graph (with more than $2$ vertices) is propagative because taking one vertex from each part yields a two-element subset $S$ with $\Phi S = S$.
In particular, for any $n>2$, there is a non-propagative $n$-vertex graph such that each vertex has degree at least $\lfloor\frac{n}{2}\rfloor$ (\Cref{fig:BipartiteGraph}).

\begin{figure}[ht]
    \centering
     \begin{tikzpicture}[every node/.style={circle, draw, minimum size=0.5cm, inner sep=0}]

    \foreach \i in {1,...,4} {
        \node[fill=white] (u\i) at (0, -\i) {};
    }

    \foreach \j in {1,...,3} {
        \node[fill=white] (v\j) at (4, -\j-0.5) {};
    }

    \node[fill=blue!30] at (0, -1) {};
    \node[fill=blue!30] at (4, -1.5) {};

    \foreach \i in {1,...,4} {
        \foreach \j in {1,...,3} {
            \draw (u\i) -- (v\j);
        }
    }

    \end{tikzpicture}
    \caption{Bipartite graphs are never propagative.}
    \label{fig:BipartiteGraph}
\end{figure}
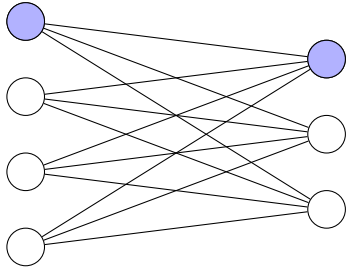

\end{example}

This bound is in fact sharp; that is, the answer to \Cref{pzl} is $m=\lfloor\frac{n}{2}\rfloor +1$.

\begin{proposition}\label{prop:GraphCalculation}
    Let $n> 2$ and let $G = (V,E)$ be an undirected graph with $n$ vertices.
    If every vertex in $G$ has degree greater than $\frac{n}{2}$, then $G$ is propagative.
\end{proposition}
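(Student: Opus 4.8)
The plan is to reduce propagativity to a single growth lemma: \emph{if $S \subsetneq V$ and $|S| \ge 2$, then $\Phi_G S \supsetneq S$}. Granting this, for any two-element set $S$ the chain $S \subseteq \Phi_G S \subseteq \Phi_G^2 S \subseteq \cdots$ is strictly increasing at every stage where $\Phi_G^j S \ne V$ (each such $\Phi_G^j S$ being a proper subset of size $\ge 2$), and since all these sets are contained in the finite set $V$, we must have $\Phi_G^k S = V$ for some $k \ge 1$; this is precisely propagativity. So everything comes down to the growth lemma.

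To prove the growth lemma I would argue by contradiction, supposing $\Phi_G S = S$ for some $S \subsetneq V$ with $|S| \ge 2$ and writing $T \coloneqq V \setminus S \ne \emptyset$. Unwinding the definition of $\Phi_G$, the assumption says exactly that every $w \in T$ has at most one neighbour in $S$ (writing $N(v)$ for the neighbourhood of a vertex $v$, this is $|N(w) \cap S| \le 1$). The argument then proceeds in two counting steps. First, since each $w \in T$ has more than $n/2$ neighbours of which at most one lies in $S$, it has more than $n/2 - 1$ neighbours inside $T \setminus \{w\}$; this forces $|T| - 1 > n/2 - 1$, hence $|S| = n - |T| < n/2$. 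Second, I would double-count the edges joining $S$ to $T$: from the $T$-side there are at most $|T| = n - |S|$ of them, whereas from the $S$-side each $s \in S$ has more than $n/2$ neighbours of which at most $|S| - 1$ lie in $S$, so there are more than $|S|\bigl(\tfrac n2 - |S| + 1\bigr)$ of them in total. Comparing the two bounds gives $|S|\bigl(\tfrac n2 - |S| + 1\bigr) < n - |S|$, which rearranges to $(|S| - 2)\bigl(|S| - \tfrac n2\bigr) > 0$; but $2 \le |S| < n/2$ makes the left-hand side nonpositive --- a contradiction.

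The step I expect to be the main obstacle, or at least the one needing care, is recognising that the obvious shortcuts fail. One might hope that every $w \notin S$ automatically has two neighbours in $S$ (so that $\Phi_G S$ jumps straight to $V$), but the inclusion--exclusion estimate $|N(w) \cap S| \ge |N(w)| + |S| - n$ only yields \emph{one} common neighbour when, say, $n$ is odd and both $|N(w)|$ and $|S|$ equal $(n+1)/2$. So the two-phase counting seems unavoidable: one must first wring the inequality $|S| < n/2$ out of the neighbourhoods lying \emph{inside} $T$, and only then does the count of edges between $S$ and $T$ (together with the factorisation of the resulting quadratic) close the argument. Note that the strict hypothesis $\deg > n/2$ is used in both counting steps, which is consistent with the bipartite examples of \Cref{example} showing it cannot be relaxed to $\deg \ge n/2$.
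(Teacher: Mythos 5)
Your proposal is correct and takes essentially the same approach as the paper: you pass to the same fixed set $S$ with $\Phi S = S$, use the same observation that each outside vertex has at most one neighbour in $S$, and derive the same two counting estimates (the bound forcing $|S| < n/2$, and the degree-sum/edge count over $S$, which is the paper's inequality $x(x-1)+(n-x) \ge xm$ with $m$ replaced by the strict bound $n/2$). The only difference is cosmetic: the paper keeps the minimum degree $m$ general and settles the resulting quadratic inequality by convexity and checking the endpoints $x=2$ and $x=n-m$, whereas you substitute the hypothesis directly and factor it as $(|S|-2)\bigl(|S|-\tfrac{n}{2}\bigr) > 0$, which closes the argument just as well.
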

\begin{proof} (This proof is visualised in \Cref{fig:IllustrationForTheProof}.) 
Let $m$ be the minimum degree of the vertices in $G$.
We will prove the contrapositive of the proposition. 
More precisely, we will assume that the graph $G$ is not propagative, and prove $2m\leq n$.

Since $G$ is not propagative, we can find a proper subset $S \subsetneq V$ such that $|S| \ge 2$ and $\Phi S = S$.
Observe that the equation $\Phi S = S$ is equivalent to the condition
\begin{itemize}
    \item [(\dag)] each vertex $v\in V \setminus S$ has at most one neighbour in $S$.
\end{itemize}
We claim that the cardinality $x \coloneqq |S|$ satisfies
\begin{itemize}
    \item[(i)] $2\leq x\leq n-m$, and
    \item[(ii)] $x(x-1)+(n-x)\geq xm$.
\end{itemize}

For the second inequality in (i), pick $v \in V \setminus S$.
Then its degree $d_v$ satisfies $d_v \ge m$ by the minimality of $m$, and it also satisfies 
\[
d_v \leq  |V\setminus (S \cup \{v\})|+1 = n-x
\]
by (\dag).
Combining these inequalities yields $x \le n-m$.
To obtain (ii), consider the sum of degrees of all elements in $S$.
On the one hand, we can deduce from (\dag) that it is bounded above by
\[
|S|\bigl(|S|-1\bigr) + |V\setminus S| = x(x-1) + (n-x).
\]
On the other hand, the minimality of $m$ provides a lower bound $|S|m = xm$. This completes the proof of the two inequalities (i) and (ii).

Since the function $f(t) =t(t-1)+(n-t)- tm$ is downward convex, the existence of a solution $x$ to the inequalities (i) and (ii) implies the satisfaction of (ii) at one of the boundaries of (i).
If it is satisfied at $x=2$ then we have
\[
n = 2(2-1)+(n-2) \ge 2m.
\]
In the other case, rearranging the resulting inequality
\[
(n-m)(n-m-1)+(n-(n-m)) \ge (n-m)m
\]
yields
\[
(n-2m)(n-m-1)\geq 0.
\]
Since (i) implies $n-m-1 > 0$, we conclude that $n \ge 2m$ holds in either case.
\end{proof}

\begin{figure}[ht]
    \centering
\begin{tikzpicture}[every node/.style={circle, draw, minimum size=0.5cm, inner sep=0}]

\def\radius{2} 
\foreach \i in {0,...,5} {
    \node[fill={white}, draw] (v\i) at ({\radius*cos(-60+60*\i)}, {\radius*sin(-60+60*\i)}) {};
}

\foreach \i in {0,1,2} {
    \node[fill=white] (v\i) at ({\radius*cos(-60+60*\i)}, {\radius*sin(-60+60*\i)}) {};
}

\node[fill=white] (v6) at (\radius+ \radius,0) {};

\foreach \i in {3,4,5} {
    \node[fill=blue!30] (v\i) at ({\radius*cos(-60+60*\i)}, {\radius*sin(-60+60*\i)}) {};
}

\draw (v0) -- (v1);
\draw (v1) -- (v2);
\draw (v2) -- (v0);

\draw (v3) -- (v4);
\draw (v4) -- (v5);
\draw (v5) -- (v3);

\draw (v0) -- (v5);
\draw (v1) -- (v4);
\draw (v2) -- (v3);

\draw (v0) -- (v6);
\draw (v1) -- (v6);
\draw (v2) -- (v6);

\draw[thick, dashed] (-3,-2.5) rectangle (0,2.5);
\node[draw = none] at (-2,2) {$S$};

\draw[thick, dashed] (0,-2.5) rectangle (5,2.5);
\node[draw = none] at (4,2) {$G \setminus S$};

\end{tikzpicture}
    \caption{Illustration for $n=7, m=3$.}
    \label{fig:IllustrationForTheProof}
\end{figure}
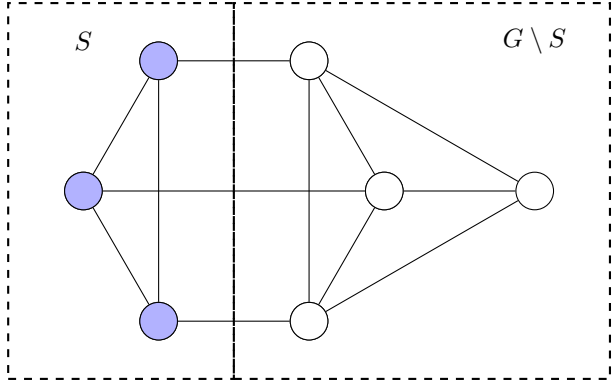

\subsection{Reduction at points}

In later subsections, where we will be dealing with highly degenerate $M$-structures $x$ on a finite set $A$, it will be useful to keep track of which points of $A$ may be discarded without losing essential information about $x$.
This will be done using the following notion of \emph{reduction}, and this subsection is devoted to proving various technical lemmas around it.

\begin{lemma}\label{lem:EquivalentConditionsForTheReduction}
    Let $M$ be a symmetric set, $A$ be a non-empty finite set, $a\in A$ be an element, and $x\in M(A)$ be an $M$-structure.
    Then the following conditions are equivalent.
    \begin{enumerate}
        \item $\mass(x\delta^a) = \mass(x)$.
        \item For any EZ-decomposition $x=y\alpha$ of $x$, the decomposition $x\delta^a = y (\alpha\delta^a)$ of $x\delta^a$ is an EZ-decomposition.
        \item For some EZ-decomposition $x=y\alpha$ of $x$, the decomposition $x\delta^a = y (\alpha\delta^a)$ of $x\delta^a$ is an EZ-decomposition.
        \item The $\sim_x$-class containing the element $a \in A$ is not a singleton.
    \end{enumerate}
\end{lemma}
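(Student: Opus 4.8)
The plan is to derive all four equivalences from \Cref{lem:CriterionOfEZDecomposition}, using a single elementary observation: if $x = y\alpha$ is an EZ-decomposition with $\alpha \colon A \twoheadrightarrow B$, then the function $\alpha\delta^a \colon A\setminus\{a\} \to B$ is surjective if and only if $\alpha^{-1}\bigl(\alpha(a)\bigr) \neq \{a\}$, i.e.\ if and only if the $\sim_x$-class of $a$ is not a singleton. Given this, I would close the loop $(4) \Rightarrow (2) \Rightarrow (3) \Rightarrow (1) \Rightarrow (4)$.

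First, $(2) \Rightarrow (3)$ holds because every $M$-structure admits an EZ-decomposition (\Cref{prop:UniquenessOfEZdecomposition}). For $(3) \Rightarrow (1)$: if $x = y\alpha$ is an EZ-decomposition (so $|B| = \mass(x)$ and $y$ is non-degenerate) and $x\delta^a = y(\alpha\delta^a)$ is also an EZ-decomposition, then \Cref{lem:CriterionOfEZDecomposition} gives $\mass(x\delta^a) = |B| = \mass(x)$. For $(1) \Rightarrow (4)$: fix any EZ-decomposition $x = y\alpha$; since $(\alpha\delta^a, y)$ is a decomposition of $x\delta^a$, \Cref{lem:CriterionOfEZDecomposition} yields $\mass(x\delta^a) \le |B| = \mass(x)$, and the hypothesis forces equality, so that same lemma makes $(\alpha\delta^a, y)$ an EZ-decomposition; in particular $\alpha\delta^a$ is surjective, hence by the observation above the $\sim_x$-class of $a$ is not a singleton. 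Finally, for $(4) \Rightarrow (2)$: given an arbitrary EZ-decomposition $x = y\alpha$, the non-singleton hypothesis furnishes $b \neq a$ with $\alpha(b) = \alpha(a)$, so $\alpha\delta^a$ remains surjective; as $y$ is non-degenerate, $(\alpha\delta^a, y)$ is then an EZ-decomposition of $x\delta^a$.

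I do not expect any genuine obstacle here: the whole argument rests on \Cref{lem:CriterionOfEZDecomposition} together with the trivial surjectivity criterion for $\alpha\delta^a$. The only subtlety worth flagging is that $(1)\Rightarrow(4)$ is proved for \emph{every} EZ-decomposition whereas $(4)\Rightarrow(2)$ only needs to produce the conclusion for \emph{some} EZ-decomposition; this discrepancy is harmless because \Cref{prop:UniquenessOfEZdecomposition} identifies all EZ-decompositions of $x$ up to a unique isomorphism, so whether $\alpha\delta^a$ is surjective is independent of the chosen EZ-decomposition.
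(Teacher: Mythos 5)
Your proof is correct, and it rests on the same two pillars as the paper's (\Cref{lem:CriterionOfEZDecomposition} and the surjectivity criterion for $\alpha\delta^a$), but the step out of condition (4) is handled by a genuinely different, more elementary argument. The paper closes the cycle $(1)\Rightarrow(2)\Rightarrow(3)\Rightarrow(4)\Rightarrow(1)$ and proves $(4)\Rightarrow(1)$ by choosing $a' \sim_x a$ with $a' \neq a$, building the retraction $\rho$ of $\delta^a$ sending $a \mapsto a'$, observing $x\delta^a\rho = x$, and then sandwiching masses via \Cref{lem:pbdoesntIncreaseMass} (hence \Cref{lem:laxCongruence}). You instead close $(4)\Rightarrow(2)\Rightarrow(3)\Rightarrow(1)\Rightarrow(4)$ and get $(4)\Rightarrow(2)$ straight from the definitions: since the fibre of $\alpha$ over $\alpha(a)$ contains another point, $\alpha\delta^a$ remains surjective, and $y$ is still non-degenerate, so $(\alpha\delta^a, y)$ is an EZ-decomposition outright, no mass-monotonicity lemma needed. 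This makes your version slightly more self-contained for this lemma (it does not invoke \Cref{lem:laxCongruence} or \Cref{lem:pbdoesntIncreaseMass}), while the paper's retraction trick is an instance of a manoeuvre it reuses elsewhere, so neither approach costs anything globally. Your closing remark about the some/any discrepancy is the right point to flag, and it is correctly resolved by the well-definedness of the EZ-congruence via \Cref{prop:UniquenessOfEZdecomposition}.
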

\begin{proof}
    Assume $(1)$, and let $x = y\alpha$ be an EZ-decomposition of $x$.
    Then the decomposition $x\delta^a = y (\alpha \delta^a)$ realizes the mass $\mass(x\delta^a) = \mass(x)$, which implies $(2)$ by \Cref{lem:CriterionOfEZDecomposition}.

    Clearly $(2)$ implies $(3)$.
    
    The condition $(3)$ implies that the composite $\alpha \delta^a$ is surjective, which in turn implies $(4)$.

    Assuming $(4)$, we can take $a \neq a'\in A$ such that $a\sim_x a'$.
    Fix an EZ-decomposition $x=y\alpha$, and write $\rho\colon A \twoheadrightarrow A\setminus \{a\}$ for the retraction of $\delta^a$ that maps $a$ to $a'$.
    Then we have $\alpha(a) = \alpha(a')$, which implies $\alpha\delta^a\rho = \alpha$, which in turn implies 
    $x\delta^a \rho = y\alpha\delta^a\rho = y\alpha = x$.
    Thus by \Cref{lem:pbdoesntIncreaseMass}, we have 
    \[
    \mass(x) \geq \mass(x\delta^a) \geq \mass(x\delta^a \rho) = \mass(x).
    \]
    This completes the proof.
\end{proof}

We will borrow the following terminology from \cite{kennett2011levels}.

\begin{definition}[Reduction]\label{def:reduction}
    For a symmetric set $M$, a non-empty finite set $A$, and an $M$-structure $x\in M(A)$, we say that $a\in A$ \demph{reduces} $x$ if $a$ and $x$ satisfy the equivalent conditions in \Cref{lem:EquivalentConditionsForTheReduction}.
\end{definition}


Intuitively, a point $a\in A$ reduces an $M$-structure $x\in M(A)$ if $x\delta^a$ retains enough information about $x$; \Cref{fig:DecompositionLifting} provides an example of reduction/non-reduction in the case where $M=\Graph$. 
In the rest of this subsection, we will prove lemmas which make this intuition precise.

\begin{lemma}[Congruence lifting]\label{lem:restrictingcongruence}
    Let $M$ be a symmetric set, $A$ be a non-empty finite set, $a \in A$ be an element, and $x\in M(A)$ be an $M$-structure.
    If $a$ reduces $x$, then the EZ-congruence associated to $x\delta^a$ is the restriction of that associated to $x$.
\end{lemma}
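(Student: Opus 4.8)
The plan is to read off the EZ-congruence of $x\delta^a$ from the particular EZ-decomposition that the reduction hypothesis provides. First I would fix an EZ-decomposition $x = y\alpha$ of $x$, say with $\alpha \colon A \twoheadrightarrow B$. Since $a$ reduces $x$, condition~(2) of \Cref{lem:EquivalentConditionsForTheReduction} tells us that the decomposition $x\delta^a = y(\alpha\delta^a)$ is again an EZ-decomposition, this time of $x\delta^a$. Because the EZ-congruence is independent of the chosen EZ-decomposition by \Cref{prop:UniquenessOfEZdecomposition} (this is what makes \Cref{def:EZcongruence} legitimate), I am free to compute $\sim_{x\delta^a}$ using precisely this decomposition $(\alpha\delta^a, y)$.

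With that decomposition in hand, for $b, c \in A \setminus \{a\}$ we get
\[
b \sim_{x\delta^a} c \iff (\alpha\delta^a)(b) = (\alpha\delta^a)(c).
\]
Since $\delta^a \colon A \setminus \{a\} \rightarrowtail A$ is the canonical inclusion, $(\alpha\delta^a)(b) = \alpha(b)$ and likewise for $c$, so the right-hand side is exactly $\alpha(b) = \alpha(c)$, i.e.\ $b \sim_x c$. This is precisely the statement that $\sim_{x\delta^a}$ is the restriction of $\sim_x$ to $A \setminus \{a\}$, completing the proof.

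I do not expect any genuine obstacle here: all the work has already been done in \Cref{lem:EquivalentConditionsForTheReduction}, and what remains is bookkeeping with definitions. It may be worth remarking that the inclusion ``(restriction of $\sim_x$) $\subseteq$ $\sim_{x\delta^a}$'' holds unconditionally, being an instance of \Cref{lem:laxCongruence} applied to $\phi = \delta^a$; the role of the reduction hypothesis is exactly to upgrade this to an equality, by ensuring that $\alpha\delta^a$ remains a mass-realising quotient so that no two distinct $\sim_x$-classes can get merged upon passing to $x\delta^a$.
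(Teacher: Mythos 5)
Your proof is correct and follows the same route as the paper: the paper's own argument is exactly that the claim ``immediately follows from the second (or third) condition in \Cref{lem:EquivalentConditionsForTheReduction}'', and you have simply spelled out the resulting computation with $\alpha\delta^a$ (together with the well-definedness of the EZ-congruence via \Cref{prop:UniquenessOfEZdecomposition}). Your closing remark that one inclusion holds unconditionally by \Cref{lem:laxCongruence} is a nice observation, but not needed.
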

\begin{proof}
    This immediately follows from the second (or third) condition in \Cref{lem:EquivalentConditionsForTheReduction}.
\end{proof}

\begin{lemma}[Equality lifting: one point]\label{lem:EqualityLiftingOne}
    Let $M$ be a symmetric set, $A$ be a non-empty finite set, $a \in A$ be an element, and $x,y\in M(A)$ be $M$-structures.
    Suppose that
    \begin{itemize}
        \item $x$ and $y$ have the same EZ-congruence,
        \item $a$ reduces either (hence both) of $x$ and $y$, and
        \item $x\delta^a=y\delta^a$.
    \end{itemize}
    Then we have $x=y$.
\end{lemma}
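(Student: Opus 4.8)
The plan is to reuse the retraction argument from the proof of \Cref{lem:EquivalentConditionsForTheReduction}. The point is that when $a$ reduces $x$, the structure $x$ can be recovered from $x\delta^a$ by pulling back along a suitable \dq{collapsing} endofunction of $A$, and — crucially — the \emph{same} collapsing function also recovers $y$ from $y\delta^a$, precisely because $x$ and $y$ share an EZ-congruence. So the whole statement should reduce to a short computation once the right map is chosen.

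Concretely, I would first invoke condition~(4) of \Cref{lem:EquivalentConditionsForTheReduction} for $x$ (which applies since $a$ reduces $x$) to obtain an element $a'\in A$ with $a'\neq a$ and $a\sim_x a'$. Since $x$ and $y$ have the same EZ-congruence, we also get $a\sim_y a'$. Then I would let $\rho\colon A\twoheadrightarrow A\setminus\{a\}$ be the retraction of $\delta^a$ that sends $a$ to $a'$, so that $\delta^a\rho\colon A\to A$ collapses $a$ onto $a'$ and fixes every other element.

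The second step is to check that $x(\delta^a\rho)=x$ and $y(\delta^a\rho)=y$, exactly as in the $(4)\implies(1)$ part of \Cref{lem:EquivalentConditionsForTheReduction}: taking an EZ-decomposition $x=z\alpha$, from $a\sim_x a'$ we get $\alpha(a)=\alpha(a')$, hence $\alpha\delta^a\rho=\alpha$, and therefore $x\delta^a\rho=z\alpha\delta^a\rho=z\alpha=x$; the same computation with $a\sim_y a'$ gives $y\delta^a\rho=y$. Finally, functoriality of pullback together with the hypothesis $x\delta^a=y\delta^a$ yields
\[
x = x\delta^a\rho = (x\delta^a)\rho = (y\delta^a)\rho = y\delta^a\rho = y,
\]
which is the desired conclusion.

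I do not expect a genuine obstacle here: the only subtlety is that one and the same $a'$ must serve for both $x$ and $y$, and this is exactly what the shared-EZ-congruence hypothesis guarantees; everything else is a verbatim repeat of the retraction argument already carried out in \Cref{lem:EquivalentConditionsForTheReduction}.
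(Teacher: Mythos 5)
Your proof is correct, but it takes a genuinely different route from the paper's. You recover both structures from their common restriction: using condition (4) of \Cref{lem:EquivalentConditionsForTheReduction} you pick $a'\neq a$ with $a\sim_x a'$, note that the shared EZ-congruence gives $a\sim_y a'$ for the \emph{same} $a'$, and then the retraction computation $x\delta^a\rho=x$, $y\delta^a\rho=y$ yields $x=(x\delta^a)\rho=(y\delta^a)\rho=y$; all the individual steps (including $\alpha\delta^a\rho=\alpha$ and the right-action identities) check out, and the needed maps exist since $|A|\ge 2$. The paper instead takes EZ-decompositions $x=z\pi$ and $y=w\pi$ along the common quotient $\pi\colon A\twoheadrightarrow A/{\sim}$, observes that $z(\pi\delta^a)$ and $w(\pi\delta^a)$ are two EZ-decompositions of the single structure $x\delta^a=y\delta^a$, and invokes \Cref{prop:UniquenessOfEZdecomposition} together with surjectivity of $\pi\delta^a$ to force the comparison bijection to be the identity, whence $z=w$. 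Your argument is shorter and more elementary: beyond the well-definedness of the EZ-congruence it needs no appeal to the uniqueness proposition, only the retraction trick already used in \Cref{lem:EquivalentConditionsForTheReduction}, and it isolates the useful fact that pulling back along the collapse $\delta^a\rho$ acts as the identity on any structure whose congruence identifies $a$ with $a'$. The paper's argument, on the other hand, runs entirely through the uniqueness of EZ-decompositions, which is the same mechanism it reuses for \Cref{lem:DecompositionLifting}, so it keeps the toolkit uniform; both proofs are valid.
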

\begin{proof}
    Let us write $\sim$ for the common EZ-congruence of $x$ and $y$.
    Then we can take EZ-decompositions $x=z\pi$ and $y=w \pi$ involving the same quotient map $\pi\colon A \twoheadrightarrow A/{\sim}$.
    The second assumption implies that $\pi\delta^a$ is surjective, whereas the third implies
    \[
    z(\pi \delta^a) =x\delta^a=y\delta^a =w (\pi \delta^a).
    \]
    We have thus obtained two EZ-decompositions of the same $M$-structure, so applying \Cref{prop:UniquenessOfEZdecomposition} yields an automorphism $\sigma$ on $A/{\sim}$ such that $\sigma\pi\delta^a = \pi\delta^a$ and $z = w\sigma$.
    But since $\pi\delta^a$ is surjective, the first of these two equalities implies that $\sigma$ is the identity, which in turn implies $z=w$.
    This completes the proof.
\end{proof}

\begin{lemma}[Equality lifting:  two points]\label{lem:EqualityLiftingTwoPoints}
    Let $M$ be a symmetric set, $A$ be a non-empty finite set, $a,b \in A$ be elements, and $x,y\in M(A)$ be $M$-structures.
    Suppose that
    \begin{itemize}
        \item each of $a$ and $b$ reduces both $x$ and $y$,
        \item $x\delta^a = y\delta^a$, and
        \item $x\delta^b=y\delta^b$.
    \end{itemize}
    Then we have $x=y$.
\end{lemma}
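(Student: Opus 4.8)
The plan is to reduce the statement to its one-point counterpart \Cref{lem:EqualityLiftingOne}; since that lemma's only extra input is that $x$ and $y$ have the same EZ-congruence, the whole job is to prove $\sim_x=\sim_y$ (throughout I work with $a\neq b$). First I would pass to restrictions. Because $a$ reduces both $x$ and $y$, \Cref{lem:restrictingcongruence} tells us that the restrictions of $\sim_x$ and of $\sim_y$ to $A\setminus\{a\}$ are exactly the EZ-congruences of $x\delta^a$ and of $y\delta^a$; since $x\delta^a=y\delta^a$, these two restrictions coincide. Running the same argument at $b$ shows that $\sim_x$ and $\sim_y$ also agree on $A\setminus\{b\}$. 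As $a\neq b$, any two-element subset of $A$ other than $\{a,b\}$ lies inside $A\setminus\{a\}$ or inside $A\setminus\{b\}$, so $p\sim_x q\iff p\sim_y q$ for every pair $\{p,q\}\neq\{a,b\}$.

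It then remains to decide the pair $\{a,b\}$, and by symmetry it suffices to show that $a\sim_x b$ implies $a\sim_y b$. Here I would distinguish two cases according to the $\sim_x$-class of $a$. If that class contains a third element $c$, then $c\sim_x a$ and $c\sim_x b$; the pairs $\{a,c\}$ and $\{b,c\}$ are both different from $\{a,b\}$, so the previous paragraph gives $c\sim_y a$ and $c\sim_y b$, hence $a\sim_y b$ by transitivity. If instead the $\sim_x$-class of $a$ is exactly $\{a,b\}$, then the $\sim_x$-class of $b$ meets $A\setminus\{a\}$ only in $\{b\}$, so by the previous paragraph the $\sim_y$-class of $b$ likewise meets $A\setminus\{a\}$ only in $\{b\}$; but $b$ reduces $y$, so by the fourth condition of \Cref{lem:EquivalentConditionsForTheReduction} this class is not a singleton, which forces $a\sim_y b$. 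This proves $\sim_x=\sim_y$, and then \Cref{lem:EqualityLiftingOne} applied at $a$ (same EZ-congruence, $a$ reduces both, $x\delta^a=y\delta^a$) gives $x=y$.

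I expect the genuine difficulty to be exactly this last pair $\{a,b\}$, and within it the sub-case where the $\sim_x$-class of $a$ is $\{a,b\}$: the restriction data recorded by $x\delta^a$ and $x\delta^b$ is simply blind to the $\sim$-relationship between $a$ and $b$, so one cannot avoid invoking the reduction hypotheses themselves (concretely, that the $\sim_y$-class of $b$ is non-trivial) to recover it. Everything else is routine bookkeeping with restrictions of equivalence relations, together with \Cref{lem:restrictingcongruence} and \Cref{lem:EqualityLiftingOne}.
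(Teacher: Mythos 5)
Your proof is correct and follows essentially the same route as the paper: reduce to \Cref{lem:EqualityLiftingOne} by showing ${\sim_x}={\sim_y}$, use \Cref{lem:restrictingcongruence} to settle all pairs except $\{a,b\}$, and split on whether the class of $a$ contains a third element. The only (immaterial) difference is in the final sub-case, where you argue via the class of $b$ in $A\setminus\{a\}$ and the hypothesis that $b$ reduces $y$, while the paper symmetrically uses the isolation of $a$ in $\sim_{y\delta^b}$ and the fact that $a$ reduces $y$.
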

\begin{proof}
    By \Cref{lem:EqualityLiftingOne}, it suffices to prove the equality between their EZ-congruences ${\sim_x} = {\sim_y}$.
    Combining the second and third assumptions with \Cref{lem:restrictingcongruence}, we can deduce $c \sim_x d \iff c \sim_y d$ for all $c,d \in A$ except for the pair $\{c,d\} = \{a,b\}$.
    It thus remains to prove $a\sim_x b \iff a\sim_y b$. 

    Suppose $a\sim_x b$. If the $\sim_x$-class containing $a$ and $b$ contains a third element $c$, then we have
    \[
    a \sim_{x\delta^b} c \sim_{x\delta^a} b
    \quad\text{or equivalently}\quad
    a \sim_{y\delta^b}  c \sim_{y\delta^a} b,
    \]
    which implies $a\sim_y b$ by \Cref{lem:restrictingcongruence}.
    If this $\sim_x$-class is just $\{a,b\}$, then $a$ is isolated with respect to ${\sim_{x\delta^b}}={\sim_{y\delta^b}}$.
    However, $a$ cannot be isolated with respect to ${\sim_{y}}\subset A^2$ because $a$ reduces $y$. This implies $a\sim_y b$ as desired.
\end{proof}

The following lemma is visualised in \Cref{fig:DecompositionLifting} in the case of $M = \Graph$.
\begin{lemma}[Decomposition lifting]\label{lem:DecompositionLifting}
    Let $M$ be a symmetric set, $A$ be a non-empty finite set, $a \in A$ be an element, and $x\in M(A)$ be an $M$-structure.
    Suppose that $a$ reduces $x$, and fix an EZ-decomposition $x\delta^a = y\alpha$ of $x\delta^a$ where $\alpha \colon A\setminus\{a\}\twoheadrightarrow B$.
    Then there exists a unique surjection $\beta\colon A \twoheadrightarrow B$ such that
    \begin{itemize}
        \item $x= y\beta$ is an EZ-decomposition of $x$, and
        \item $\beta$ is an extension of $\alpha$ (that is, $\beta \delta^a =  \alpha$).
    \end{itemize}
\end{lemma}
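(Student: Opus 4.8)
The plan is to construct $\beta$ by hand from a suitable retraction of $\delta^a$, and then to read off uniqueness from the essential uniqueness of EZ-decompositions (\Cref{prop:UniquenessOfEZdecomposition}).

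For existence, I would first invoke the hypothesis that $a$ reduces $x$: by condition~(4) of \Cref{lem:EquivalentConditionsForTheReduction}, the $\sim_x$-class of $a$ is not a singleton, so I may fix some $a' \in A \setminus \{a\}$ with $a \sim_x a'$. Let $\rho \colon A \twoheadrightarrow A \setminus \{a\}$ be the retraction of $\delta^a$ that sends $a$ to $a'$, and set $\beta \coloneqq \alpha\rho \colon A \twoheadrightarrow B$. This is automatically a surjection, and the extension condition is immediate: $\beta\delta^a = \alpha\rho\delta^a = \alpha$ since $\rho\delta^a = \id$.

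The one step with actual content is checking $x = y\beta$, and for this I would establish the identity $x\delta^a\rho = x$. Choosing any EZ-decomposition $x = z\gamma$, the relation $a \sim_x a'$ gives $\gamma(a) = \gamma(a')$, so $\gamma\delta^a\rho = \gamma$ and hence $x\delta^a\rho = z(\gamma\delta^a\rho) = z\gamma = x$. Feeding in $x\delta^a = y\alpha$ then yields $y\beta = y\alpha\rho = x\delta^a\rho = x$. Since $\beta$ is a surjection and $y$ is non-degenerate (as $(\alpha, y)$ was assumed to be an EZ-decomposition of $x\delta^a$), the pair $(\beta, y)$ is an EZ-decomposition of $x$ — directly from the definition, or via \Cref{lem:CriterionOfEZDecomposition}.

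For uniqueness, suppose $\beta'$ is another surjection with $x = y\beta'$ an EZ-decomposition and $\beta'\delta^a = \alpha$. Applying \Cref{prop:UniquenessOfEZdecomposition} to the two EZ-decompositions $(\beta, y)$ and $(\beta', y)$ of $x$ produces a bijection $\sigma \colon B \to B$ with $\sigma\beta = \beta'$. Precomposing with $\delta^a$ gives $\sigma\alpha = \sigma\beta\delta^a = \beta'\delta^a = \alpha$, and surjectivity of $\alpha$ forces $\sigma = \id_B$, whence $\beta = \beta'$. I expect the only genuinely substantive point to be the choice of $a'$ together with the verification $x\delta^a\rho = x$ — that is, the fact that $x$ is recovered from $x\delta^a$ by "cloning" the deleted point onto one of its $\sim_x$-equivalent neighbours; everything else is routine manipulation in $\F$ and an appeal to the uniqueness of EZ-decompositions.
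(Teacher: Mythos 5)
Your proof is correct, but it takes a somewhat different route from the paper's. For existence, the paper starts from an arbitrary EZ-decomposition $x = z\gamma$, notes (via condition (2) of \Cref{lem:EquivalentConditionsForTheReduction}) that $x\delta^a = z(\gamma\delta^a)$ is then an EZ-decomposition of $x\delta^a$, and matches it against the given $(\alpha,y)$ by \Cref{prop:UniquenessOfEZdecomposition}, setting $\beta \coloneqq \sigma\gamma$; you instead build $\beta = \alpha\rho$ explicitly from a retraction $\rho$ determined by a $\sim_x$-partner $a'$ of $a$, and verify $x\delta^a\rho = x$ by hand. Your verification essentially replays the $(4)\Rightarrow(1)$ step already carried out inside \Cref{lem:EquivalentConditionsForTheReduction}, so the paper's route avoids that duplication, while yours has the mild virtue of exhibiting the lift concretely (the non-canonical choice of $a'$ does not matter, by your own uniqueness argument). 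For uniqueness, the paper applies \Cref{lem:EqualityLiftingOne} to the representable symmetric set $\F(-,B)$, whereas you argue directly from \Cref{prop:UniquenessOfEZdecomposition}: the mediating bijection $\sigma$ satisfies $\sigma\alpha = \alpha$, and surjectivity of $\alpha$ forces $\sigma = \id_B$; this is exactly the argument hidden inside \Cref{lem:EqualityLiftingOne}, so the two are equivalent in substance, yours being more self-contained and the paper's reusing its lemma stock. No gaps.
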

\begin{proof}
    We will first prove the uniqueness of such $\beta$, assuming that it exists, by applying \Cref{lem:EqualityLiftingOne} to the representable symmetric set $\F(-,B)$.
    Observe that, when $\beta$ is regarded as an $\F(-,B)$-structure on $A$, the associated EZ-congruence $\sim_\beta$ is given by $b \sim_\beta c \iff \beta(b) = \beta(c)$ for $b,c \in A$.
    Hence $\sim_\beta$ coincides with $\sim_x$, and in particular $a$ reduces $\beta$.
    Moreover, pulling back the $M$-structure $\beta$ along $\delta^a$ simply yields the composite $\beta\delta^a$, which is required to be $\alpha$.
    This completes the proof of the uniqueness.

    Now we construct the desired $\beta$.
    Let $x= z\gamma$ be an EZ-decomposition of $x$.
    Since $a$ reduces $x$, we obtain another EZ-decomposition $x\delta^a = z (\gamma \delta^a)$ of $x\delta^a$.
    By \Cref{prop:UniquenessOfEZdecomposition}, there exists a bijection $\sigma$ such that $\sigma \gamma \delta^a = \alpha$ and $z= y\sigma$.
    It is easy to see that $\beta \coloneqq \sigma \gamma$ satisfies the required conditions.
\end{proof}

\begin{figure}[ht]
    \centering
    \begin{tikzpicture}[scale=0.8]
    \def\r{2} 
    \def\RcircleLeft{2.8} 
    \def\RcircleRight{2} 
    \def\Xshift{10} 
    \def\Yshift{8} 
    \def\YshiftDown{-8} 
    \def\XshiftRight{10} 
    \def\RcircleSmall{1.2} 

    \def\vertexcolor#1#2{%
        \ifnum#2=4 
            \ifnum#1=3 blue\else%
            \ifnum#1=6 blue\else%
            \ifnum#1=0 red\else%
            \ifnum#1=5 red\else%
            \ifnum#1=1 blue\else%
            \ifnum#1=2 blue\else%
            black\fi\fi\fi\fi\fi\fi%
        \else 
            \ifnum#1=0 red\else%
            \ifnum#1=5 red\else%
            \ifnum#1=1 blue\else%
            \ifnum#1=2 blue\else%
            \ifnum#1=3 green\else%
            \ifnum#1=6 green\else%
            \ifnum#1=4 orange\else%
            black\fi\fi\fi\fi\fi\fi\fi%
        \fi%
    }

    \begin{scope}
        \draw[thick] (0, 0) circle (\RcircleLeft);

        \foreach \i in {0,...,6} {
            \node[draw, circle, fill=\vertexcolor{\i}{0}, minimum size=7pt, inner sep=0pt] (v\i) at ({90 + \i * 360 / 7}:\r) {};
        }

        \node[left] at ($(v1) + (-0.1, 0.1)$) {\(a\)};

        \node[below right] at ($(v4) + (0.1, -0.1)$) {\(b\)};

        \draw (v0) -- (v1);
        \draw (v0) -- (v2);
        \draw (v0) -- (v3);
        \draw (v0) -- (v4);
        \draw (v0) -- (v6);
        \draw (v1) -- (v5);
        \draw (v2) -- (v5);
        \draw (v3) -- (v4);
        \draw (v3) -- (v5);
        \draw (v4) -- (v5);
        \draw (v4) -- (v6);
        \draw (v5) -- (v6);
    \end{scope}

    \begin{scope}[shift={(0, \Yshift)}] 
        \draw[thick] (0, 0) circle (\RcircleLeft);

        \foreach \i in {0,...,6} {
            \ifnum\i=1
            \else
                \node[draw, circle, fill=\vertexcolor{\i}{0}, minimum size=7pt, inner sep=0pt] (u\i) at ({90 + \i * 360 / 7}:\r) {};
            \fi
        }

        \foreach \i/\j in {0/2, 0/3, 0/4, 0/6, 2/5, 3/4, 3/5, 4/5, 4/6, 5/6} {
            \draw (u\i) -- (u\j);
        }

        \draw[>->, thick] (0, -\RcircleLeft-0.5) -- (0, -\Yshift + \RcircleLeft+0.5) 
            node[midway, left] {\(\delta^a\)}
            node[midway, right] {($a$ reduces $x$)};
    \end{scope}

    \begin{scope}[shift={(0, \YshiftDown)}] 
        \draw[thick] (0, 0) circle (\RcircleLeft);

        \foreach \i in {0,...,6} {
            \ifnum\i=4
            \else
                \node[draw, circle, fill=\vertexcolor{\i}{4}, minimum size=7pt, inner sep=0pt] (d\i) at ({90 + \i * 360 / 7}:\r) {};
            \fi
        }

        \foreach \i/\j in {0/1, 0/2, 0/3, 0/6, 1/5, 2/5, 3/5, 5/6} {
            \draw (d\i) -- (d\j);
        }

        \draw[>->, thick] (0, \RcircleLeft+0.5) -- (0, -\YshiftDown - \RcircleLeft-0.5) 
            node[midway, left] {\(\delta^b\)}
            node[midway, right] {($b$ does not reduce $x$)};
    \end{scope}

    \begin{scope}[shift={(\Xshift, \YshiftDown)}]
        \draw[thick] (0, 0) circle (\RcircleSmall);

        \node[draw, circle, fill=red, minimum size=7pt, inner sep=0pt] (p0) at (0, 0.5) {};
        \node[draw, circle, fill=blue, minimum size=7pt, inner sep=0pt] (p1) at (0, -0.5) {};

        \draw (p0) -- (p1);
    \end{scope}

    \draw[->>, thick] (\RcircleLeft + 0.5, \YshiftDown) -- (\Xshift - \RcircleRight - 0.5, \YshiftDown) node[midway, above] {\(\beta'\)};

    \begin{scope}[shift={(\Xshift,0)}] 
        \draw[thick] (0, 0) circle (\RcircleRight);

        \node[draw, circle, fill=red, minimum size=7pt, inner sep=0pt] (w0) at (-1, 1) {};
        \node[draw, circle, fill=blue, minimum size=7pt, inner sep=0pt] (w1) at (-1, -1) {};
        \node[draw, circle, fill=orange, minimum size=7pt, inner sep=0pt] (w2) at (1, -1) {};
        \node[draw, circle, fill=green, minimum size=7pt, inner sep=0pt] (w3) at (1, 1) {};

        \draw (w0) -- (w1);
        \draw (w0) -- (w2);
        \draw (w0) -- (w3);
        \draw (w2) -- (w3);
    \end{scope}

    \draw[->>, thick] (\RcircleLeft + 0.5, 0) -- (\Xshift - \RcircleRight - 0.5, 0) node[midway, above] {$\beta$};

    \draw[->>, thick] (0.5+\RcircleLeft, \Yshift - \RcircleLeft + 0.5) -- (\Xshift - \RcircleRight , \RcircleRight ) node[midway, above right] {$\alpha$};

\end{tikzpicture}
    \caption{Decomposition lifting}
    \label{fig:DecompositionLifting}
\end{figure}

\subsection{Reduction graph}\label{subs:ReductionGraph}

Armed with all the preparation, we are now ready to start proving that any cycle that comprises sufficiently degenerate $M$-structures admits a unique degenerate filler (\Cref{filling}).
This will provide the desired upper bound for the Aufhebung.

\begin{notation}\label{notation:TheCycleNotation}
Throughout Subsections \ref{subs:ReductionGraph} to \ref{section_exceptional},
we fix
    \begin{itemize}
        \item a symmetric set $M$,
        \item a positive integer $k > 0$,
        \item a $(k+1)$-element set $P$, and
        \item a $k$-cycle $\{c_p\}_{p \in P}$ (satisfying the cycle equations $c_p \delta^q = c_q \delta^p$),
    \end{itemize}
and write
\begin{itemize}     
    \item $n\coloneqq \max\{\mass(c_p)\mid p\in P\}-1$,
    \item $d\coloneqq k-n$,
    \item $\uP\coloneqq \{p\in P \mid \mass{(c_p)}=n+1\}$, and
    \item  $\dP\coloneqq \{p\in P \mid \mass{(c_p)}\leq n\}$.
\end{itemize}
\end{notation}
Note that we have $P = \uP\sqcup \dP$ and $\uP \neq \emptyset$.

The majority of what follows is devoted to proving that, if the following inequalities hold, then the $k$-cycle $\{c_p\}_{p \in P}$ admits a unique degenerate filler:
\begin{equation} \label{eq:all_ineq}
    \begin{aligned}
    d &\geq 3 \;(\iff k>n+2) \\ 
    k &> 2n - 1 .
\end{aligned}\tag{$\bigstar$}
\end{equation}
This covers all necessary cases except for $(n,k) = (1,3)$, which will be treated separately in \Cref{section_exceptional}.
We emphasise that these inequalities are NOT assumed by default in what follows; when they are assumed, we will explicitly state so.

An example of $\{c_p\}_{p \in P}$ satisfying {\CrefIneq} in the case where $M = \Graph$ and $(n,k,d)=(3,6,3)$ is visualised in \Cref{fig:CycleFilling}.



\begin{definition}\label{def:reductionGraph}
    By the \demph{reduction graph} $G$ of the $k$-cycle $\{c_p\}_{p \in P}$, we mean the directed graph such that
    \begin{itemize}
        \item its vertex set is $P$, and
        \item there is a (unique) edge $p\to q$ if and only if $p$ reduces $c_q$.
    \end{itemize}
    We write $\uG$ for the subgraph of $G$ consisting of the vertices in $\uP$ and all edges between them.
\end{definition}

\begin{figure}
    \centering
    \begin{tikzpicture}
    \def\r{2}
    \def\arrowoffset{0.13}

    \foreach \i in {0,...,6} {
        \ifnum\i=4
            \node[draw, circle, fill=white, minimum size=5pt, inner sep=0pt] (v\i) at ({90 + \i * 360 / 7}:\r) {};
        \else
            \node[draw, circle, fill=black, minimum size=5pt, inner sep=0pt] (v\i) at ({90 + \i * 360 / 7}:\r) {};
        \fi
    }

    \newcommand{\shortarrow}[2]{
        \draw[<->, thick] ($(#1) !\arrowoffset! (#2)$) -- ($(#2) !\arrowoffset! (#1)$);
    }

    \newcommand{\directarrow}[2]{
        \draw[->, thick] ($(#1) !\arrowoffset! (#2)$) -- ($(#2) !\arrowoffset! (#1)$);
    }

    \shortarrow{v0}{v1};
    \shortarrow{v5}{v1};
    \shortarrow{v0}{v2};
    \shortarrow{v5}{v2};
    \shortarrow{v0}{v3};
    \shortarrow{v5}{v3};
    \shortarrow{v0}{v6};
    \shortarrow{v5}{v6};
    \shortarrow{v1}{v3};
    \shortarrow{v1}{v6};
    \shortarrow{v2}{v3};
    \shortarrow{v2}{v6};

    \directarrow{v0}{v4};
    \directarrow{v5}{v4};
    \directarrow{v1}{v4};
    \directarrow{v2}{v4};
    \directarrow{v3}{v4};
    \directarrow{v6}{v4};
\end{tikzpicture}
    \caption{Reduction graph of the cycle \Cref{fig:CycleFilling} with black $\uP$ and white $\dP$}
    \label{fig:ReductionGraph}
\end{figure}



\begin{proposition}\label{prop:basicPropertiesOfReductionGraph}
    The reduction graph $G$ has the following properties.
    \begin{enumerate}
        \item There is no edge from a vertex in $\dP$ to one in $\uP$.
        \item In the subgraph $\uG$, there is an edge $p \to q$ if and only if there is an edge $p \leftarrow q$.
    \end{enumerate}
    In the case $d \geq 2$, it further enjoys the following properties.
    \begin{enumerate}[resume]
        \item The indegree $d_p$ of each vertex $p \in P$ is at least $d$.
        \item The subgraph $\uG$ contains at least $d+1$ vertices.
    \end{enumerate}
        
\end{proposition}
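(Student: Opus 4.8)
The plan is to reduce all four statements to two elementary facts. The first is that the cycle equation $c_p\delta^q = c_q\delta^p$ together with the monotonicity of mass under pullback (\Cref{lem:pbdoesntIncreaseMass}) lets us compare the masses of $c_p$ and $c_q$ through the common structure $c_p\delta^q=c_q\delta^p$. The second is the reformulation of reduction supplied by \Cref{lem:EquivalentConditionsForTheReduction}: since the EZ-congruence $\sim_{c_p}$ partitions the $k$-element set $P\setminus\{p\}$ into exactly $\mass(c_p)$ blocks, and $q$ reduces $c_p$ precisely when its $\sim_{c_p}$-block is not a singleton, the indegree of $p$ in $G$ (which, note, counts only $q\in P\setminus\{p\}$, as $G$ has no loops) equals $k$ minus the number of singleton $\sim_{c_p}$-blocks. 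Before starting I would dispose of the trivial case $k=1$: then $\abs{P}=2$, every $c_p$ lives on a singleton, so $n=0$, $\uP=P$, $\dP=\emptyset$, $\uG$ has no edges and $d=1$, so all four assertions hold vacuously. From now on assume $k\ge 2$, which makes the cycle equations available.

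For (1), an edge $p\to q$ with $p\in\dP$ and $q\in\uP$ would mean $p$ reduces $c_q$, hence $\mass(c_q\delta^p)=\mass(c_q)=n+1$; but $c_q\delta^p=c_p\delta^q$, so \Cref{lem:pbdoesntIncreaseMass} gives $\mass(c_p)\ge\mass(c_p\delta^q)=n+1$, contradicting $p\in\dP$. For (2), if $p,q\in\uP$ then $\mass(c_p)=\mass(c_q)=n+1$, and by the same substitution ``$p$ reduces $c_q$'' $\iff$ $\mass(c_p\delta^q)=n+1$ $\iff$ ``$q$ reduces $c_p$''; since the middle condition is symmetric in $p$ and $q$, this is exactly the asserted equivalence between the edges $p\to q$ and $p\leftarrow q$ in $\uG$.

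For (3) and (4) I would first extract the numerical consequence of $d\ge 2$: for every $p\in P$ we have $\mass(c_p)\le n+1 < n+d = k = \abs{P\setminus\{p\}}$, so the $\mass(c_p)$ blocks of $\sim_{c_p}$ cannot all be singletons, whence at most $\mass(c_p)-1\le n$ of them are. Statement (3) is then immediate: the indegree of $p$ is $k$ minus the number of singleton $\sim_{c_p}$-blocks, hence at least $k-n=d$. For (4) I fix some $p\in\uP$ (which exists). For each $q\in\dP$ the computation of (1) applies: $\mass(c_p\delta^q)=\mass(c_q\delta^p)\le\mass(c_q)\le n<n+1=\mass(c_p)$, so $q$ does not reduce $c_p$, i.e.\ $\{q\}$ is a singleton $\sim_{c_p}$-block. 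As $q$ ranges over $\dP$ these are pairwise distinct singleton blocks, so $\abs{\dP}$ is bounded by the number of singleton $\sim_{c_p}$-blocks, which is at most $n$; therefore $\abs{\uP}=(k+1)-\abs{\dP}\ge(k+1)-n=d+1$, i.e.\ $\uG$ has at least $d+1$ vertices.

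I do not anticipate a genuine obstacle; the arguments are short. The two places that need care are purely bookkeeping: keeping the identification ``$\mass=$ number of blocks of the EZ-congruence, which partitions the underlying set'' straight so that the off-by-one bound on singleton blocks comes out correctly, and tracking which hypotheses each part actually uses — (1) and (2) need only $k\ge 2$ (so that the cycle equations make sense), whereas (3) and (4) genuinely rely on $d\ge 2$ through the strict inequality $\mass(c_p)<k$, which is what rules out a discrete EZ-congruence. Once those are pinned down, each item is a two- or three-line computation.
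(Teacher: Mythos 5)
Your proof is correct and takes essentially the same approach as the paper: parts (1)--(3) are the same mass comparison via $c_p\delta^q=c_q\delta^p$ and the same singleton-class count, and your part (4) is just the complementary form of the paper's argument (you bound $|\dP|\le n$ by showing each $q\in\dP$ gives a singleton $\sim_{c_p}$-class for a fixed $p\in\uP$, whereas the paper counts the $\ge d$ in-neighbours of a vertex of $\uP$ and uses (1) to place them in $\uP$). The separate treatment of $k=1$ is harmless but not needed.
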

\begin{proof}
    To see (1) and (2), consider an edge $p\to q$ with $q\in \uP$.
    Since $p$ reduces $c_q$, we have
    \[
    \mass(c_p) \geq \mass(c_p \delta^q) = \mass(c_q \delta^p) = \mass(c_q) = n+1 \ge \mass(c_p).
    \]
    In particular, we can deduce $\mass(c_p) = n+1$ and $\mass(c_p\delta^q) = \mass(c_p)$.

    Now suppose $d \ge 2$.
    For each vertex $p \in P$, its indegree $d_p$ is equal to 
    \[
    d_p = k-\text{(the number of isolated points with respect to } {\sim_{c_p}}\text{)}.
    \]
    Recall that $\sim_{c_p}$ is an equivalence relation on the $k$-element set $P \setminus \{p\}$, and the number of $\sim_{c_p}$-classes is precisely $\mass(c_p) \le n+1$.
    Since we are assuming $k = n+d \ge n+2$, there must be at least one $\sim_{c_p}$-class which is not a singleton.
    Hence the number of isolated points in the above equation is at most $n$, which establishes (3).

    For (4), fix $q \in \uP$, whose existence follows from the definition of $n$.
    Then there are at least $d$ edges of the form $p \to q$ by (3), and each such $p$ belongs to $\uP$ by (1).
    This completes the proof.
\end{proof}

\begin{figure}
    \centering
    \begin{tikzpicture}[scale = 0.9]
    \def\r{1.7}
    \def\arrowoffset{0.13}
    \def\xshift{3.5}
    \def\yshift{4}

    \newcommand{\drawPolygon}[3]{
        \foreach \i in {0,...,#1} {
            \node[draw, circle, fill=#3, draw=#3, minimum size=5pt, inner sep=0pt] (v\i) at ({(\i - 0.25* (#1+3)) * (360 / (#1+1))}:\r) {};
        }
        \foreach \i in {0,...,#1} {
            \pgfmathtruncatemacro{\j}{mod(\i+1,#1+1)}
            \draw[<->, thick, color=#3] ($(v\i) !\arrowoffset! (v\j)$) -- ($(v\j) !\arrowoffset! (v\i)$);
        }
    }

    \newcommand{\drawSplitPolygon}[2]{
        \pgfmathtruncatemacro{\half}{(#1+1)/2 -1}
        \pgfmathtruncatemacro{\halfu}{(#1+1)/2}
        \foreach \i in {0,...,#1} {
            \node[draw, circle, fill=#2, draw=#2, minimum size=5pt, inner sep=0pt] (v\i) at ({(\i + 0.25* (#1+3)) * (360 / (#1+1))}:\r) {};
        }
        \foreach \i in {0,...,\half} {
            \foreach \j in {0,...,\half} {
                \pgfmathtruncatemacro{\jindex}{\j} \pgfmathtruncatemacro{\iindex}{\i}
                \ifnum\iindex<\jindex \relax
                    \draw[<->, thick, color=#2] ($(v\iindex) !\arrowoffset! (v\jindex)$) -- ($(v\jindex) !\arrowoffset! (v\iindex)$);
                \fi
            }
        }
        \foreach \i in {\halfu,...,#1} {
            \foreach \j in {\halfu,...,#1} {
                \pgfmathtruncatemacro{\jindex}{\j} \pgfmathtruncatemacro{\iindex}{\i}
                \ifnum\iindex<\jindex \relax
                    \draw[<->, thick, color=#2] ($(v\iindex) !\arrowoffset! (v\jindex)$) -- ($(v\jindex) !\arrowoffset! (v\iindex)$);
                \fi
            }
        }
    }

    \begin{scope}[shift={(2*\xshift, 2* \yshift)}]
        \foreach \i in {0,...,2} {
            \node[draw, circle, fill=black, draw=black, minimum size=5pt, inner sep=0pt] (v\i) at ({(\i + 0.25* (4)) * (360 / (3))}:\r) {};
        }
    \end{scope}

    \begin{scope}[shift={(2*\xshift, \yshift)}]
        \drawPolygon{4}{black}{black}
    \end{scope}

    \begin{scope}[shift={(2*\xshift, 0)}]
        \drawSplitPolygon{5}{black}
    \end{scope}
    \begin{scope}[shift={(2*\xshift, -\yshift)}]
        \drawSplitPolygon{7}{black}
    \end{scope}
    \begin{scope}[shift={(2*\xshift, -2*\yshift)}]
        \drawSplitPolygon{9}{black}
    \end{scope}

    \node at (\xshift, 2*\yshift) {$l=1$};
    \node at (\xshift, \yshift) {$l=2$};
    \node at (\xshift, 0) {$l=3$};
    \node at (\xshift, -\yshift) {$l=4$};
    \node at (\xshift, -2*\yshift) {$l=5$};
    \node at (1.5 * \xshift, {(-2.7)*\yshift}) {$\vdots$};

    \end{tikzpicture}
    \caption{The reduction graphs of the cycles in \Cref{lem:LowerBoundcycle}, which are not propagative.}
    \label{fig:ReductionGraphOfLowerBound}
\end{figure}

Notice that we can regard the graph $\uG$, which is a priori directed, as an undirected graph thanks to \Cref{prop:basicPropertiesOfReductionGraph}(2).

\begin{remark}[The reduction graphs of the unfillable cycles in \Cref{lem:LowerBoundcycle}]\label{rmk:ReductionGraphOfTheUnfillableCycles}
It turns out that, assuming \CrefIneq, the undirected graph $\uG$ must be propagative (\Cref{prop:ReductionGraphIsPropagative}), and this property will play a central role when constructing a filler of $\{c_p\}_{p \in P}$ (\Cref{prop:KnowTwotoKnow}).
Hence, in order to find an unfillable $k$-cycle $\{c_p\}_{p \in P}$, it is natural to look for one with a non-propagative reduction graph.
Indeed, the cycles appearing in \Cref{lem:LowerBoundcycle} (for which $P=\uP$ holds) have non-propagative reduction graphs, as visualised in \Cref{fig:ReductionGraphOfLowerBound}.
\end{remark}

\invmemo{this was subsumed by claim a:\\
Note that, if $p \in P$ satisfies $d_p= d$, then the EZ-congruence $\sim_{c_p}$ consists of a $d$-element equivalence class $K$ and $n$ isolated points. This implies $\mass(c_p) = n+1$, or equivalently $p \in \uP$. 
}
\invmemo{Furthermore, for any $q\in K$, $K\setminus\{q\}$ is EZ-congruent in $c_q$ (since $q\leftrightarrow p$). This \invmemo{and the assumption $d\geq 3$} implies that any other $q' \in K\setminus\{q\}$ reduces $c_q$.}

\subsection{Candidate filler and its compatibility with \texorpdfstring{$\uP$}{uP}}\label{subs:CandidateFillerAndItsCompatibilityWithUp}

    

We wish to show that, 
assuming \CrefIneq, we may construct a filler $f$ of the cycle $\{c_p\}_{p \in P}$.
In this subsection, we construct a candidate for such $f$, and prove that it satisfies $f\delta^p = c_p$ for at least all $p \in \uP$.
Our strategy is to prove that
\begin{enumerate}
    \item we can find two points $p,q \in \uG$ and $f \in M(P)$ such that $f\delta^p = c_p$ and $f\delta^q = c_q$ (\Cref{lem:fillingEdge}),
    \item the property ``$f\delta^r = c_r$'' propagates along the edges of $\uG$ (\Cref{prop:KnowTwotoKnow}), and
    \item $\uG$ is propagative (\Cref{prop:ReductionGraphIsPropagative}).
\end{enumerate}
Moreover, such $f$ turns out to be unique if we impose $\mass(f) = n+1$.

\begin{lemma}[Seed of propagation]\label{lem:fillingEdge}
    Let $p\leftrightarrow q$ be an edge in $\uG$.
    Then there exists a unique $f \in M(P)$ such that $f \delta^p = c_p$, $f\delta^q = c_q$, and $\mass(f)=n+1$.
\end{lemma}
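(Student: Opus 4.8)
The plan is to build the filler $f$ by gluing together EZ-data extracted separately from $c_p$ and from $c_q$ along their common restriction, and then to read off uniqueness directly from the equality-lifting lemmas. First I would record the structural facts forced by the hypothesis: since $p\leftrightarrow q$ is an edge of $\uG$, both $p$ and $q$ lie in $\uP$, so $\mass(c_p)=\mass(c_q)=n+1$; moreover $q$ reduces $c_p$ and $p$ reduces $c_q$, where one uses \Cref{prop:basicPropertiesOfReductionGraph}(2) to pass freely between the two directions of the edge. In particular $\mass(c_p\delta^q)=\mass(c_p)=n+1$, and the cycle equation gives $c_p\delta^q=c_q\delta^p$.

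\textbf{Existence.} I would then fix an EZ-decomposition $c_p\delta^q=c_q\delta^p=z\gamma$ of this common restriction, with $\gamma\colon P\setminus\{p,q\}\twoheadrightarrow C$ and $|C|=n+1$. Since $q$ reduces $c_p$, \Cref{lem:DecompositionLifting} furnishes a surjection $\beta_1\colon P\setminus\{p\}\twoheadrightarrow C$ with $\beta_1\delta^q=\gamma$ and $c_p=z\beta_1$ an EZ-decomposition; symmetrically, since $p$ reduces $c_q$, one gets $\beta_2\colon P\setminus\{q\}\twoheadrightarrow C$ with $\beta_2\delta^p=\gamma$ and $c_q=z\beta_2$. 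As $\beta_1$ and $\beta_2$ both restrict to $\gamma$ on $P\setminus\{p,q\}$, they glue to a single $\beta\colon P\to C$ with $\beta\delta^p=\beta_1$ and $\beta\delta^q=\beta_2$, still surjective. Taking $f\coloneqq z\beta$ then gives $f\delta^p=z\beta_1=c_p$ and $f\delta^q=z\beta_2=c_q$; and since $\beta$ is surjective and $z$ non-degenerate, $(\beta,z)$ is an EZ-decomposition of $f$, so $\mass(f)=|C|=n+1$ by \Cref{lem:CriterionOfEZDecomposition}.

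\textbf{Uniqueness.} Given a second $f'$ meeting the three conditions, I would observe that each of $p$ and $q$ reduces both $f$ and $f'$ — because, for instance, $\mass(f\delta^p)=\mass(c_p)=n+1=\mass(f)$ and likewise for the other combinations — and then invoke \Cref{lem:EqualityLiftingTwoPoints} on $f$ and $f'$ along the two points $p,q$, using $f\delta^p=c_p=f'\delta^p$ and $f\delta^q=c_q=f'\delta^q$, to conclude $f=f'$.

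\textbf{Main obstacle.} I expect the only real subtlety to lie in the bookkeeping of the existence step: one must begin from a \emph{single} EZ-decomposition of the common restriction $c_p\delta^q=c_q\delta^p$, not from unrelated EZ-decompositions of $c_p$ and $c_q$, so that the two invocations of decomposition lifting produce surjections $\beta_1,\beta_2$ that agree on the overlap $\gamma$ and hence genuinely glue; and one must check that the glued pair $(\beta,z)$ is again an EZ-decomposition, which is exactly what makes the mass condition $\mass(f)=n+1$ automatic. Everything else is a direct appeal to the lemmas of the preceding subsections; notably, \CrefIneq plays no role here beyond guaranteeing that $\uG$ has any edges at all.
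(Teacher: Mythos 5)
Your proposal is correct and follows essentially the same route as the paper: a single EZ-decomposition of the common restriction $c_p\delta^q=c_q\delta^p$, two applications of decomposition lifting glued along that restriction to produce the filler $f$ with $\mass(f)=n+1$, and uniqueness via the two-point equality-lifting lemma after noting that $p$ and $q$ reduce any such $f$. The extra bookkeeping you flag (starting from one common EZ-decomposition so the lifted surjections glue, and checking the glued pair is again an EZ-decomposition) is exactly what the paper's proof does implicitly.
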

\begin{proof}
    Observe that, since $p,q \in \uG$ implies $\mass(c_p) = \mass(c_q) = n+1$, any solution $f$ to these three equations is necessarily reduced by both $p$ and $q$.
    It follows by \Cref{lem:EqualityLiftingTwoPoints} that such $f$ is unique if it exists.
    
    Let $c_p \delta^q = c_q \delta^p = x\alpha$ be an EZ-decomposition where $\alpha \colon P\setminus\{p,q\} \twoheadrightarrow S$. 
    Since $p$ reduces $c_q$ and $q$ reduces $c_p$, \Cref{lem:DecompositionLifting} provides surjections $\beta_p\colon P\setminus\{p\}\twoheadrightarrow S$ and $\beta_q \colon P\setminus\{q\}\twoheadrightarrow S$ such that
    \begin{itemize}
        \item $c_p = x\beta_p$,
        \item $\beta_p \delta^q = \alpha$,
        \item $c_q = x\beta_q$, and
        \item $\beta_q \delta^p = \alpha$.
    \end{itemize}
    We define $\beta \colon P \twoheadrightarrow S$ to be the unique common extension of $\beta_p$ and $\beta_q$. Then $f \coloneqq x\beta$ satisfies $f \delta^p = c_p$, $f \delta^q = c_q$, and $\mass(f)=n+1$.
\end{proof}

In the following proposition, note that we are allowing the case $r \in \dP$.
This extra generality will be utilised in the next subsection.

\begin{proposition}[Propagation step]\label{prop:KnowTwotoKnow}
    Let $p,q,r \in P$ be distinct elements and $f \in M(P)$ with $\mass(f)=n+1$.
    Suppose that
    \begin{itemize}
        \item $p,q \in \uP$,
        \item both $p$ and $q$ reduce $c_r$,
        \item $f \delta^p = c_p$, and
        \item $f\delta^q = c_q$.
    \end{itemize}
    Then we have $f\delta^r = c_r$.
\end{proposition}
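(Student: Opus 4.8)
The plan is to prove $f\delta^r = c_r$ by exhibiting a common ``reduction witness'' and invoking the equality-lifting lemmas. The key observation is that both sides of the desired equation restrict compatibly along $\delta^p$ and $\delta^q$ (for the appropriate subsets): on the one hand $(f\delta^r)\delta^p = f\delta^p\delta^r = c_p\delta^r$ (using $\delta^r\delta^p = \delta^p\delta^r$), and on the other hand $(c_r)\delta^p = c_p\delta^r$ by the cycle equation; similarly with $q$ in place of $p$. So $f\delta^r$ and $c_r$ are two $M$-structures on $P\setminus\{r\}$ that agree after pulling back along $\delta^p\colon P\setminus\{p,r\}\hookrightarrow P\setminus\{r\}$ and along $\delta^q\colon P\setminus\{q,r\}\hookrightarrow P\setminus\{r\}$.

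To conclude $f\delta^r = c_r$ from this via \Cref{lem:EqualityLiftingTwoPoints}, I need to check that each of $p$ and $q$ reduces both $f\delta^r$ and $c_r$. That $p$ and $q$ reduce $c_r$ is a hypothesis. For $f\delta^r$: since $\mass(f) = n+1$ is the maximal mass and $p,q\in\uP$ means $\mass(c_p) = \mass(c_q) = n+1 = \mass(f)$, the equations $f\delta^p = c_p$ and $f\delta^q = c_q$ together with \Cref{lem:pbdoesntIncreaseMass} force $\mass(f\delta^p) = \mass(f\delta^q) = \mass(f)$, i.e.\ $p$ and $q$ reduce $f$. I then need that $p$ (resp.\ $q$) still reduces $f$ after first restricting along $\delta^r$; equivalently, that the $\sim_f$-class of $p$ has an element other than $p$ surviving in $P\setminus\{r\}$. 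Using \Cref{lem:restrictingcongruence}, the restriction of $\sim_f$ to $P\setminus\{p\}$ equals $\sim_{c_p}$, and the restriction of $\sim_{c_p}$ to $P\setminus\{p,r\}$ equals $\sim_{c_p\delta^r} = \sim_{c_r\delta^p}$ (since $p$ reduces $c_r$, again by \Cref{lem:restrictingcongruence}); tracing through these identifications lets me locate a partner of $p$ inside $P\setminus\{p,q,r\}$ or handle the small residual configurations directly. One has to be slightly careful about whether $p$'s $\sim_f$-partner might be exactly $q$ or exactly $r$; in those degenerate situations I would instead apply the single-point lifting \Cref{lem:EqualityLiftingOne} after separately checking that $f\delta^r$ and $c_r$ share an EZ-congruence, which follows from the congruence-lifting lemma applied along $\delta^p$ (or $\delta^q$).

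The main obstacle I anticipate is precisely this bookkeeping about which elements of the relevant $\sim$-classes survive the successive deletions of $p$, $q$, $r$ — i.e.\ ruling out (or separately dispatching) the cases where the only reduction partner of $p$ in $P\setminus\{p\}$ is $q$ or $r$. This is where the hypothesis $d\geq 3$ (part of \CrefIneq, though the proposition is stated without it) or at least $d\geq 2$ would normally be used to guarantee enough surviving partners, via \Cref{prop:basicPropertiesOfReductionGraph}; if the proposition genuinely avoids that hypothesis, the argument must instead lean on $\mass(f) = n+1$ to control class sizes. Once the reducibility conditions are verified, the equality $f\delta^r = c_r$ drops out of \Cref{lem:EqualityLiftingTwoPoints} (or \Cref{lem:EqualityLiftingOne}) with no further computation.
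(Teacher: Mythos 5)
Your overall skeleton matches the paper's: the restriction equalities $(f\delta^r)\delta^p=c_r\delta^p$ and $(f\delta^r)\delta^q=c_r\delta^q$ are immediate from the cycle equations together with $f\delta^p=c_p$, $f\delta^q=c_q$, and the entire content of the proposition is to show that $p$ and $q$ reduce $f\delta^r$ so that \Cref{lem:EqualityLiftingTwoPoints} applies. That crucial step is where your argument has a genuine gap. First, the chain of identifications you propose misuses the lemmas: the claim that the restriction of $\sim_{c_p}$ to $P\setminus\{p,r\}$ equals $\sim_{c_p\delta^r}$ is an application of \Cref{lem:restrictingcongruence} that would require $r$ to reduce $c_p$, which is not a hypothesis; only the one-way inclusion from \Cref{lem:laxCongruence} is available. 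More importantly, restrictions of $\sim_f$ along $\delta^p$ (i.e.\ $\sim_{c_p}$) carry no information about the $\sim_f$-class of $p$ itself, so this chain cannot ``locate a partner of $p$''. The residual cases you flag are exactly the crux, and your fallback for them does not work as stated: to use \Cref{lem:EqualityLiftingOne} you must know that $f\delta^r$ and $c_r$ have \emph{equal} EZ-congruences, and congruence lifting along a single point $a\in\{p,q\}$ only gives agreement on $P\setminus\{r,a\}$, leaving the relations involving $a$ undetermined --- this is precisely the subtlety that \Cref{lem:EqualityLiftingTwoPoints} exists to handle.

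The missing idea is a symmetry observation that collapses the case analysis. Since $p$ and $q$ reduce $c_r$ and $f\delta^r\delta^p=c_r\delta^p$, $f\delta^r\delta^q=c_r\delta^q$, we get $\mass(f\delta^r\delta^p)=\mass(f\delta^r\delta^q)=\mass(c_r)$, hence $p$ reduces $f\delta^r$ iff $\mass(f\delta^r)=\mass(c_r)$ iff $q$ reduces $f\delta^r$; so it suffices to rule out that \emph{neither} reduces. If neither does, then $p$ and $q$ are isolated for $\sim_{f\delta^r}$, while each is non-isolated for $\sim_f$ (both reduce $f$, since $\mass(f\delta^p)=\mass(c_p)=n+1=\mass(f)$ and likewise for $q$); by \Cref{lem:laxCongruence} the only possible $\sim_f$-partner of each is then $r$, giving $p\sim_f r\sim_f q$ and hence $p\sim_{f\delta^r}q$ by \Cref{lem:laxCongruence} again --- contradicting the isolation of $p$. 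This is the paper's argument; note that it needs no appeal to $d\ge 3$, \CrefIneq, or \Cref{prop:basicPropertiesOfReductionGraph}, only the stated hypotheses, whereas your proposal leaves open whether such extra assumptions are needed.
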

\begin{proof}
    We wish to apply \Cref{lem:EqualityLiftingTwoPoints} to $x=f\delta^r$ and $y=c_r$.
    Since we already know $f\delta^r  \delta^p = f\delta^p  \delta^r = c_p \delta^r = c_r \delta^p$ and similarly $f\delta^r  \delta^q = c_r \delta^q$, it remains to prove that $p$ and $q$ reduce $f \delta^r$.
    Note that, since we already know $\mass(f\delta^r  \delta^p) =\mass (c_r \delta^p)= \mass (c_r)$ and similarly $\mass(f\delta^r\delta^q) = \mass(c_r)$, $p$ reduces $f\delta^r$ if and only if $\mass(f\delta^r)= \mass(c_r)$ if and only if $q$ reduces $f\delta^r$.

    Suppose for contradiction that neither $p$ nor $q$ reduces $f\delta^r$.
    Then $p$ is isolated with respect to ${\sim_{f\delta^r}}$.
    However, it is not isolated with respect to $\sim_f$ as it reduces $f$ (because $\mass(f)= n+1 =\mass(c_p) = \mass(f \delta^p)$).
    Therefore we have $p \sim_f r$ by \Cref{lem:laxCongruence}, and similarly $q \sim_f r$, which implies $p \sim_f q$.
    This in turn implies $p \sim_{f\delta^r} q$ again by \Cref{lem:laxCongruence}, which contradicts the second sentence of this paragraph.
\end{proof}

\begin{lemma}\label{lem:TwoCases}
    Assuming \CrefIneq,
    at least one of the following conditions holds.
    \begin{enumerate}
        \item For every vertex $p\in P$, we have $d_p>d$.
        \item The set $\dP$ contains at least $n-1$ elements.
    \end{enumerate}
\end{lemma}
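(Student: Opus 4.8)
The plan is to split on whether (1) holds: if it does, we are done, so assume instead that some vertex $p_0 \in P$ has $d_{p_0} = d$ (recall $d_p \ge d$ for all $p$ by \Cref{prop:basicPropertiesOfReductionGraph}(3), which applies since $d \ge 3$ under \CrefIneq), and aim to show $|\dP| \ge n-1$. First I would read off the shape of $\sim_{c_{p_0}}$: as an equivalence relation on the $k$-element set $P \setminus \{p_0\}$ it has exactly $k - d_{p_0} = n$ isolated points, so its non-singleton classes partition a set of size $d$ into $\mass(c_{p_0}) - n$ blocks; since $d \ge 3$ this number is $\ge 1$, and since $\mass(c_{p_0}) \le n+1$ it is $\le 1$, so there is a single non-singleton class $K$ with $|K| = d$, and in particular $p_0 \in \uP$. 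Write $S_0 \coloneqq P \setminus (\{p_0\} \cup K)$, so $|S_0| = n$ and $S_0$ is exactly the set of $\sim_{c_{p_0}}$-isolated points. By \Cref{lem:EquivalentConditionsForTheReduction}(4) the in-neighbours of $p_0$ are precisely the elements of $K$, which all lie in $\uP$ by \Cref{prop:basicPropertiesOfReductionGraph}(1); also, for $s \in S_0 \cap \uP$ there is no edge $s \to p_0$ (as $s$ is $\sim_{c_{p_0}}$-isolated), hence no edge $p_0 \to s$ either by \Cref{prop:basicPropertiesOfReductionGraph}(2).

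Next I would determine $\sim_{c_q}$ for each $q \in K$. Since $q \to p_0$, also $p_0 \to q$ by \Cref{prop:basicPropertiesOfReductionGraph}(2), so combining \Cref{lem:restrictingcongruence} with the cycle equation $c_q\delta^{p_0} = c_{p_0}\delta^q$ shows that the restriction of $\sim_{c_q}$ to $P \setminus \{p_0, q\}$ consists of the single block $K \setminus \{q\}$ (of size $d-1 \ge 2$) together with the $n$ singletons of $S_0$. A class count using $\mass(c_q) = n+1$ then forces the $\sim_{c_q}$-class of $p_0$ to be non-singleton, equal after deleting $p_0$ to one of those blocks; this leaves exactly two cases: (A) $\sim_{c_q}$ is the block $(K \setminus \{q\}) \cup \{p_0\}$ plus the singletons $S_0$; or (B) for a unique $s(q) \in S_0$, $\sim_{c_q}$ consists of the blocks $K \setminus \{q\}$ and $\{p_0, s(q)\}$ plus the singletons $S_0 \setminus \{s(q)\}$. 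For $q \in K$ and $s \in S_0 \cap \uP$, the cycle equation gives $\mass(c_s\delta^q) = \mass(c_q\delta^s)$, and this together with the above case analysis shows $q \to s$ iff $q$ is of type (B) with $s(q) = s$; in particular the sets $K_s \coloneqq \{q \in K \mid q \to s\}$ for $s \in S_0 \cap \uP$ are pairwise disjoint subsets of $K$.

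Now put $t \coloneqq |S_0 \cap \uP|$. For each $s \in S_0 \cap \uP$, every in-neighbour of $s$ lies in $\uP$ (\Cref{prop:basicPropertiesOfReductionGraph}(1)), is not $p_0$, and if it lies in $K$ then it lies in $K_s$; since the remaining possibilities lie in $S_0 \cap \uP \setminus \{s\}$, we get $d \le d_s \le |K_s| + (t-1)$, i.e.\ $|K_s| \ge d - t + 1$. Summing over $s$ yields $d \ge |K| \ge \sum_s |K_s| \ge t(d-t+1)$, and since $t(d-t+1) - d = (t-1)(d-t)$, this says $(t-1)(d-t) \le 0$. If $t \le 1$ then $|\dP| \ge |S_0| - t \ge n-1$ and we are done. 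Otherwise $t \ge 2$, so $(t-1)(d-t) \le 0$ forces $d \le t$; combined with $t \le |S_0| = n$ and $n \le d$ (the last inequality being the content of $k > 2n-1$ in \CrefIneq), this pins us to the boundary case $t = n = d$ (so $k = 2n$ and $n \ge 3$).

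It remains to exclude this boundary case, and this is the main obstacle, since the counting inequality $(t-1)(d-t)\le 0$ alone does not rule it out. Here $S_0 \subseteq \uP$, each $|K_s| = 1$, every $q \in K$ is of type (B), and $q \mapsto s(q)$ is a bijection $K \to S_0$; moreover the bounds above are tight, so $d_s = n$ for each $s \in S_0$ and $\sim_{c_s}$ has exactly one non-singleton class, namely $K_s \cup (S_0 \setminus \{s\})$, with isolated set $\{p_0\} \cup (K \setminus K_s)$. Choose distinct $s_1, s_2 \in S_0$ (possible since $n \ge 3$) and let $q_i$ be the unique element of $K_{s_i}$. As $s_2$ lies in the non-singleton class of $\sim_{c_{s_1}}$, it reduces $c_{s_1}$, so by \Cref{lem:restrictingcongruence} the non-singleton class of $\sim_{c_{s_1}\delta^{s_2}}$ is $\{q_1\} \cup (S_0 \setminus \{s_1, s_2\})$, which contains $q_1$ and is genuinely non-singleton because $S_0 \setminus \{s_1,s_2\} \neq \emptyset$. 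Symmetrically the unique non-singleton class of $\sim_{c_{s_2}\delta^{s_1}}$ is $\{q_2\} \cup (S_0 \setminus \{s_1,s_2\})$, so $q_1$ (being distinct from $q_2$) is $\sim_{c_{s_2}\delta^{s_1}}$-isolated. This contradicts the cycle equation $c_{s_1}\delta^{s_2} = c_{s_2}\delta^{s_1}$. Hence $t \le 1$, and therefore $|\dP| \ge n-1$, completing the proof.
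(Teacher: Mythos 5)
Your argument is correct, and while its opening coincides with the paper's proof, its core counting is genuinely different. Both proofs begin identically: at a vertex $p_0$ with $d_{p_0}=d$, the congruence $\sim_{c_{p_0}}$ must consist of one block $K$ of size $d$ plus $n$ singletons (this is the paper's \Cref{sim_c_p}, with your $K$ and $S_0$ playing the roles of $Q$ and $R$), and for each $q\in K$ the class of $p_0$ in $\sim_{c_q}$ is forced into your two types (A)/(B). From there the paper argues by contradiction with the negation of (2): it shows (via \Cref{ReductionAmongQ}, comparing $\sim_{c_q}$ and $\sim_{c_{q'}}$ for distinct $q,q'\in K$) that at most \emph{one} element of $R$ can reduce any $c_q$ at all (\Cref{ReducingNone}), applies the Pigeonhole Principle against $|\dP|<n-1$ to find $r\in\uP\cap R$ reduced by nothing in $K\cup\{p_0\}$, and then a single class count at $c_r$ yields $k\le 2n-1$, contradicting \CrefIneq. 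You instead prove $|\dP|\ge n-1$ directly by a degree double count over $t=|S_0\cap\uP|$: the disjointness of the sets $K_s$ plus the indegree bound $d_s\ge d$ gives $(t-1)(d-t)\le 0$, and $k>2n-1$ enters as $d\ge n$, pinning any failure to the boundary configuration $t=n=d$, which you then eliminate by a separate rigidity argument pitting the unique non-singleton classes of $\sim_{c_{s_1}}$ and $\sim_{c_{s_2}}$ against the cycle equation $c_{s_1}\delta^{s_2}=c_{s_2}\delta^{s_1}$. I checked the details (the identification of in-neighbours of each $s$ via \Cref{prop:basicPropertiesOfReductionGraph}(1)--(2), the disjointness of the $K_s$, the tightness analysis forcing $|K_s|=1$ and $d_s=n$, and the final isolated/non-isolated clash for $q_1$, which needs $n\ge 3$ and indeed has it since $n=d\ge 3$) and they hold. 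The trade-off: the paper's route avoids any boundary case because its "unique exceptional element" claim is sharper than your disjointness of the $K_s$, and it uses $k>2n-1$ only once at the very end; your route is direct rather than by double contradiction and replaces the congruence-comparison lemma \Cref{ReductionAmongQ} by an elementary convexity-flavoured count, at the price of an extra ad hoc argument for the extremal case $t=n=d$.
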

\begin{proof}
    We may assume $n\geq 2$ because the condition (2) trivially holds in the case $n \le 1$.
    We will further assume
    \begin{itemize}
        \item[ ($\lnot$1)] there exists $p \in P$ with $d_p = d$, and
        \item[ ($\lnot$2)] the set $\dP$ contains less than $n-1$ elements,
    \end{itemize}
    and derive a contradiction.
    For fix $p \in P$ with $d_p = d$, and define
    \[
    Q\coloneqq \{q\in P\mid q\to p\}.
    \]
    We will write $R\coloneqq P \setminus (Q\cup \{p\})$.
    Note that this set $R$ contains exactly $(k+1)-(d+1) = n$ elements.
    \begin{claim}\label{sim_c_p}
        The EZ-congruence $\sim_{c_p}$ partitions $P \setminus \{p\}$ as
        \[
        P\setminus \{p\} = Q \sqcup\{r_1\} \sqcup \dots \sqcup \{r_n\}
        \]
        where $r_i \in R$, so in particular $p \in \uP$.
        Consequently, for any $q \in Q$, the EZ-congruence $\sim_{c_{q}\delta^{p}}$ partitions $P \setminus \{p, q\}$ as
        \[
        P\setminus \{p,q\} = Q \setminus \{q\} \sqcup\{r_1\} \sqcup \dots \sqcup \{r_n\}.
        \]
    \end{claim}
    \begin{proof}[Proof of Claim]
        We know that none of the $n$ elements $r_i \in R$ reduces $c_p$, so \Cref{lem:EquivalentConditionsForTheReduction} implies that each $\{r_i\}$ is a singleton $\sim_{c_p}$-class.
        Since there can only be $\mass(c_p) \le n+1$ many $\sim_{c_p}$-classes, it follows that the whole $Q$ forms a single $\sim_{c_p}$-class.
        The last assertion follows from the cycle equation $c_q\delta^p = c_p\delta^q$ 
        and \Cref{lem:restrictingcongruence}.
    \end{proof}
    As a consequence of $p\in \uP$, we obtain another description of the set $Q$:
    \[
    Q = \{q\in \uP\mid q \leftrightarrow p\}
    \]
    due to \Cref{prop:basicPropertiesOfReductionGraph}.
    \begin{claim}\label{ReductionAmongQ}
        Let $q,q' \in Q$ be distinct elements.
        Then $q'$ reduces $c_q$.
    \end{claim}
    \begin{proof}[Proof of Claim]
        Since $p$ reduces $c_q$, the EZ-congruence $\sim_{c_q\delta^p}$ of \Cref{sim_c_p} is the restriction of $\sim_{c_q}$ onto $P \setminus \{p,q\}$.
        In particular, the $\sim_{c_q}$-class containing $q'$ must subsume $Q \setminus \{q\}$.
        It therefore follows from $|Q \setminus \{q\}| =d-1 \ge 3-1 = 2$
        that $q'$ reduces $c_q$.
    \end{proof}
    \begin{claim}\label{ReducingNone}
        The set $R$ contains at least $n-1$ elements that reduce no $c_q$ for $q \in Q$.
    \end{claim}
    \begin{proof}[Proof of Claim]
        First, let us fix $q \in Q$ and consider which $r_i \in R$ does not reduce $c_q$.
        Since $p$ reduces $c_{q}$, the EZ-congruence $\sim_{c_q\delta^p}$ of \Cref{sim_c_p} is the restriction of $\sim_{c_{q}}$ onto $P \setminus \{p, q\}$.
        It follows that $\{r_i\}$ is a singleton $\sim_{c_q}$-class for all $1 \le i \le n$ except for possibly one $i$ satisfying $r_i \sim_{c_q} p$.

        It remains to prove that if we have $r_i \sim_{c_q} p$ and $r_{i'} \sim_{c_{q'}} p$ for distinct $q,q' \in Q$ then $i=i'$.
        Observe that, since $q$ reduces $c_{q'}$ and $q'$ reduces $c_q$ by \Cref{ReductionAmongQ}, the EZ-congruences $\sim_{c_q}$ and $\sim_{c_{q'}}$ restrict to the same equivalence relation $\sim$ on $P \setminus \{q,q'\}$.
        So $r_i \sim_{c_q} p$ and $r_{i'} \sim_{c_{q'}} p$ would imply $r_i \sim p \sim r_{i'}$, which in turn implies $r_i \sim_{c_q} r_{i'}$, and consequently $r_i \sim_{c_q\delta^p} r_{i'}$.
        We can thus conclude $i=i'$ by \Cref{sim_c_p}.
    \end{proof}

    \begin{claim}\label{r}
        There exists $r \in \uP \cap R$ such that neither $p$ nor any $q \in Q$ reduces $c_r$.
    \end{claim}
    \begin{proof}[Proof of Claim]
        Since we are assuming that $\dP = P \setminus \uP$ contains less than $n-1$ elements, \Cref{ReducingNone} and the Pigeonhole Principle imply that there exists $r \in \uP \cap R$ that reduces no $c_q$ for $q \in Q$.
        We also know that $r$ does not reduce $c_p$ because $r \in R$.
        By \Cref{prop:basicPropertiesOfReductionGraph}(2), this $r$ has the desired property.
    \end{proof}
    
    We can finally derive the desired contradiction using $r$ of \Cref{r} as follows.
    By \Cref{lem:EquivalentConditionsForTheReduction}, each element of $Q \cup \{p\}$ gives rise to a singleton $\sim_{c_r}$-class.
    In addition, since we have
    \[
    |P \setminus \{r\}| = k = n+d > n+1 = \mass(c_r),
    \]
    there must be at least one non-singleton $\sim_{c_r}$-class.
    Hence by counting the number of $\sim_{c_r}$-classes, we obtain the inequality
    \[
    n+1 = \mass(c_r) \geq |Q \cup \{p\}|+1 = d+2 = k-n+2,
    \]
    which is equivalent to $k \le 2n-1$.
    This contradicts \CrefIneq.
\end{proof}

\begin{proposition}\label{prop:ReductionGraphIsPropagative}
     Assuming \CrefIneq,
    the undirected graph $\uG$ is propagative.
\end{proposition}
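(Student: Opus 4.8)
The plan is to deduce this from the purely combinatorial \Cref{prop:GraphCalculation}, applied to $\uG$ viewed as an undirected graph on the vertex set $\uP$. Write $N \coloneqq |\uP|$. By \Cref{prop:GraphCalculation} it suffices to check that $N > 2$ and that every vertex of $\uG$ has degree strictly greater than $N/2$.

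First I would translate ``degree in $\uG$'' into something already controlled. For $p \in \uP$, \Cref{prop:basicPropertiesOfReductionGraph}(1) says that no edge of $G$ runs from $\dP$ into $\uP$, so every edge of $G$ pointing at $p$ already lies in $\uG$; together with \Cref{prop:basicPropertiesOfReductionGraph}(2) (which lets us regard $\uG$ as undirected in the first place), this shows that the undirected degree of $p$ in $\uG$ equals the indegree $d_p$ of $p$ in the full reduction graph $G$. The condition $N > 2$ is immediate from \Cref{prop:basicPropertiesOfReductionGraph}(4) and $d \geq 3$, which give $N \geq d+1 \geq 4$.

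Next I would record the crude upper bound $N \leq 2d+1$, valid under \CrefIneq: since $k > 2n-1$ forces $k \geq 2n$, we get $d = k-n \geq n$, hence $N \leq |P| = k+1 = n+d+1 \leq 2d+1$. The main step is then a case analysis driven by \Cref{lem:TwoCases}. In the case where $d_p > d$, i.e.\ $d_p \geq d+1$, for every $p \in P$, combining with $N \leq 2d+1$ yields $d_p \geq d+1 > \tfrac{2d+1}{2} \geq \tfrac N2$. In the case where $|\dP| \geq n-1$, we instead get the sharper bound $N = (k+1) - |\dP| \leq (k+1)-(n-1) = d+2$, and then \Cref{prop:basicPropertiesOfReductionGraph}(3) gives $d_p \geq d$, so that $d_p \geq d > \tfrac{d+2}{2} \geq \tfrac N2$, the strict inequality $d > \tfrac{d+2}{2}$ being exactly where the hypothesis $d \geq 3$ is used. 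In either case every vertex of $\uG$ has degree $> N/2$, and \Cref{prop:GraphCalculation} concludes.

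I do not expect a genuine obstacle once \Cref{lem:TwoCases} and \Cref{prop:GraphCalculation} are available; the only point requiring care is the bookkeeping — matching each of the two alternatives of \Cref{lem:TwoCases} with the right upper bound on $N$ (the weak bound $2d+1$ in the first case, the strong bound $d+2$ in the second) and verifying in each case that the threshold $N/2$ is cleared \emph{strictly}, which is where the two separate inequalities $d \geq 3$ and $k > 2n-1$ of \CrefIneq both get used.
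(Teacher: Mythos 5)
Your proof is correct and follows essentially the same route as the paper: both reduce the statement to \Cref{prop:GraphCalculation} via the case analysis of \Cref{lem:TwoCases}, using \Cref{prop:basicPropertiesOfReductionGraph}(1)--(2) to identify degrees in $\uG$ with indegrees in $G$, with the bound $|\uP|\le k+1<2(d+1)$ in the first case and $|\uP|\le d+2<2d$ in the second. Your explicit checks that $|\uP|>2$ and of exactly where $d\ge 3$ and $k>2n-1$ enter are just a slightly more detailed write-up of the paper's argument.
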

\begin{proof}
    Let $m$ be the minimum among the degrees of all vertices in the undirected graph $\uG$.
    By \Cref{prop:GraphCalculation}, it suffices to prove $|\uP|<2m$.
    We will divide our proof into two cases according to \Cref{lem:TwoCases}.
    \begin{enumerate}
        \item Suppose that we have $d_p>d$ for all $p \in P$.
        Then we have $m \ge d+1$.
        Combining this inequality with the assumption $k>2n-1$, or equivalently $k-2n+1 > 0$, we obtain
        \[
        |\uP| \leq |P| = k+1 <   2k-2n+2 = 2(d+1) \leq 2m.
        \]
        \item Suppose that $\dP = P \setminus \uP$ contains at least $n-1$ elements.
        Then, since we are assuming $d \ge 3$, we have
        \[
        |\uP| = |P| - |\dP| \leq (k+1)-(n-1) = d+2 < 2d \leq2 m
        \]
        where the last inequality follows from \Cref{prop:basicPropertiesOfReductionGraph}(3).
    \end{enumerate}
    This completes the proof.
\end{proof}

Combining the results in this subsection, we obtain the following.
\begin{proposition}\label{FillinguP}
     Assuming \CrefIneq,
    there exists a unique\footnote{If we omit $\mass(f)=n+1$, we lose the uniqueness. For example, consider $\Eq_{=j+1}$ for a large $j$. } $f\in M(P)$ such that $\mass(f) = n+1$ and $f\delta^p = c_p$ for all $p\in \uP$.
\end{proposition}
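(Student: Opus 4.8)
The plan is to assemble the three ingredients listed at the start of Subsection \ref{subs:CandidateFillerAndItsCompatibilityWithUp}. First I would invoke \Cref{prop:ReductionGraphIsPropagative} to conclude that, under {\CrefIneq}, the undirected graph $\uG$ on the vertex set $\uP$ is propagative; by \Cref{prop:basicPropertiesOfReductionGraph}(4) we have $|\uP| \ge d+1 \ge 4 > 2$, so the propagativity hypothesis is non-vacuous. Pick any two distinct vertices $p_0, q_0 \in \uP$ that are joined by an edge in $\uG$ --- such an edge exists because, by \Cref{prop:basicPropertiesOfReductionGraph}(3) together with the fact that every vertex of $\uG$ has positive indegree inside $\uP$ (again using (1) and (4)), the graph $\uG$ has at least one edge. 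Applying \Cref{lem:fillingEdge} to this edge yields an $f \in M(P)$ with $f\delta^{p_0} = c_{p_0}$, $f\delta^{q_0} = c_{q_0}$, and $\mass(f) = n+1$; this $f$ will be our candidate filler.

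Next I would show that the set $T \coloneqq \{r \in \uP \mid f\delta^r = c_r\}$ is all of $\uP$. We know $p_0, q_0 \in T$, so $|T \cap \uP| \ge 2$. The key claim is that $T$ is closed under the propagation operator $\Phi = \Phi_{\uG}$: suppose $v \in \uP$ and there are distinct $s_0, s_1 \in T$ with edges $s_0 \to v$ and $s_1 \to v$ in $\uG$. Each edge $s_i \to v$ means $s_i$ reduces $c_v$, and $s_0, s_1 \in \uP$ means $f\delta^{s_i} = c_{s_i}$ with $\mass(f) = n+1$; so \Cref{prop:KnowTwotoKnow} (applied with $(p,q,r) = (s_0, s_1, v)$) gives $f\delta^v = c_v$, i.e.\ $v \in T$. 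Hence $\{p_0, q_0\} \subseteq T$ implies $\Phi^k\{p_0,q_0\} \subseteq T$ for all $k$, and since $\uG$ is propagative there is some $k$ with $\Phi^k\{p_0, q_0\} = \uP$; therefore $T = \uP$, i.e.\ $f\delta^p = c_p$ for every $p \in \uP$.

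For uniqueness, suppose $f, f' \in M(P)$ both satisfy $\mass = n+1$ and restrict to $c_p$ for all $p \in \uP$. Fix the edge $p_0 \leftrightarrow q_0$ in $\uG$ above. Since $\mass(c_{p_0}) = \mass(c_{q_0}) = n+1 = \mass(f) = \mass(f')$ and $f\delta^{p_0} = c_{p_0} = f'\delta^{p_0}$, both $p_0$ and $q_0$ reduce each of $f$ and $f'$ (using \Cref{lem:EquivalentConditionsForTheReduction}, as in the first paragraph of the proof of \Cref{lem:fillingEdge}), and $f\delta^{p_0} = f'\delta^{p_0}$, $f\delta^{q_0} = f'\delta^{q_0}$; so \Cref{lem:EqualityLiftingTwoPoints} gives $f = f'$. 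I expect the main obstacle to be a purely bookkeeping one: making sure the edge $p_0 \leftrightarrow q_0$ genuinely exists inside $\uG$ (this is where one must carefully combine parts (1), (3) and (4) of \Cref{prop:basicPropertiesOfReductionGraph} with $d \ge 3$), since all the substantive work --- the graph-theoretic input, the propagation step, and the equality-lifting lemmas --- has already been done in the preceding subsections.
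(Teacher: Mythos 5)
Your proposal is correct and follows essentially the same route as the paper: take an edge in $\uG$ (existence from \Cref{prop:basicPropertiesOfReductionGraph}, where (1) and (3) together with $\uP\neq\emptyset$ already suffice), seed a candidate $f$ via \Cref{lem:fillingEdge}, propagate the property $f\delta^r=c_r$ over $\uP$ using \Cref{prop:KnowTwotoKnow} and the propagativity of $\uG$ (\Cref{prop:ReductionGraphIsPropagative}), and get uniqueness from the equality-lifting argument. Your uniqueness step merely unfolds the uniqueness part of \Cref{lem:fillingEdge} (i.e.\ reapplies \Cref{lem:EqualityLiftingTwoPoints}) instead of citing it, which is the same argument.
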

\begin{proof}
    Take an edge $p\leftrightarrow q$ in $\uG$, whose existence is guaranteed By \Cref{prop:basicPropertiesOfReductionGraph}.
    By \Cref{lem:fillingEdge}, there exists $f\in M(P)$ with $\mass(f) = n+1$ such that $f\delta^p = c_p$ and $f\delta^q = c_q$.
    We define $S\subset \uP$ by $S \coloneqq \{r\in \uP\mid f\delta^r = c_r\}$.
    By \Cref{prop:KnowTwotoKnow} and \Cref{prop:ReductionGraphIsPropagative}, we have $S=\uP$.
The uniqueness also follows from \Cref{lem:fillingEdge}.
\end{proof}

\subsection{Compatibility with \texorpdfstring{$\dP$}{dP}}\label{subs:CompatibilityWithDp}
In the previous subsection, we saw that the property ``$f\delta^r=c_r$'' (for the judiciously chosen $f$) propagates to the whole of $\uP$.
It thus remains to prove that this property propagates to $\dP$ too.

\begin{proposition}\label{prop:ReducingDownPoints}
     Assume \CrefIneq and let $p\in \dP$.
    Then there exist distinct points $p_0, p_1 \in \uP$ that reduce $c_p$.
\end{proposition}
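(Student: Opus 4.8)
The statement is, in the language of \Cref{def:reductionGraph}, the assertion that the vertex $p$ has at least two in-neighbours lying in $\uP$: any such in-neighbour $p_i$ reduces $c_p$ by the very definition of the reduction graph. So the plan is to bound from below the number of in-neighbours of $p$ contained in $\uP$, by combining a lower bound on the total indegree $d_p$ with an upper bound on $|\dP|$.

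The first ingredient is the inequality $d_p\geq d+1$, valid for every $p\in\dP$. Indeed, since $p\in\dP$ we have $\mass(c_p)\leq n<n+d=k$, so the equivalence relation $\sim_{c_p}$ on the $k$-element set $P\setminus\{p\}$ is not discrete and hence has at least one non-singleton class. Now $\sim_{c_p}$ has exactly $\mass(c_p)$ classes, of which precisely $k-d_p$ are singletons (the isolated points), so the number of non-singleton classes is $\mass(c_p)-(k-d_p)$; as this is $\geq 1$, we obtain $d_p\geq k-\mass(c_p)+1\geq k-n+1=d+1$. (This is essentially the computation underlying \Cref{prop:basicPropertiesOfReductionGraph}(3) and the proof of \Cref{lem:TwoCases}.)

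The second ingredient is that $|\dP|\leq d$. On one hand, \Cref{prop:basicPropertiesOfReductionGraph}(4) gives $|\uP|\geq d+1$ (this uses $d\geq 2$, which is part of \CrefIneq), so $|\dP|=|P|-|\uP|=(k+1)-|\uP|\leq (n+d+1)-(d+1)=n$. On the other hand, the inequality $k>2n-1$ from \CrefIneq\ means, for integers, $k\geq 2n$, whence $d=k-n\geq n$. Combining the two, $|\dP|\leq n\leq d$.

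Putting these together: fix $p\in\dP$. Its $d_p\geq d+1$ in-neighbours in the reduction graph all lie in $P\setminus\{p\}$, and at most $|\dP\setminus\{p\}|=|\dP|-1\leq d-1$ of them can lie in $\dP$; hence at least $d_p-(d-1)\geq 2$ of them lie in $\uP$, and any two such in-neighbours serve as the required $p_0,p_1$. I do not expect a genuine obstacle in executing this: the only non-formal inputs are \Cref{prop:basicPropertiesOfReductionGraph}(3)--(4) and the class count above, and the single point one must notice is that the hypothesis $k>2n-1$ (the arithmetic trace of the Aufhebung doubling the dimension) already forces $\dP$ to be small enough for this crude indegree count to close. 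This proposition will then feed into \Cref{prop:KnowTwotoKnow} with $r:=p$ and the filler $f$ of \Cref{FillinguP}, extending the equalities $f\delta^{p'}=c_{p'}$ from $p'\in\uP$ to all $p'\in\dP$.
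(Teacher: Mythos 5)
Your proposal is correct and takes essentially the same route as the paper: both establish $d_p \ge d+1$ for $p \in \dP$ and combine it with $|\uP| \ge d+1$ and $k \ge 2n$ in an inclusion--exclusion count on the $k$-element set $P \setminus \{p\}$ (your bound ``at most $|\dP|-1 \le d-1$ in-neighbours lie in $\dP$'' is literally the same inequality as the paper's $|\uP| + d_p > k+1$). The only cosmetic difference is that you justify $d_p \ge d+1$ by a self-contained count of singleton $\sim_{c_p}$-classes, whereas the paper rules out $d_p = d$ by citing \Cref{sim_c_p}.
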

\begin{proof}
    Since $d_p= d$ would imply $p \in \uP$ by \Cref{sim_c_p}, we have $d_p \geq d+1$.
    We also have $|\uP|\geq d+1$ by \Cref{prop:basicPropertiesOfReductionGraph}, so we obtain
    \[
    |\uP|+ d_p \ge 2d+2 = 2(k-n)+2 = k+1+\bigl(k-(2n-1)\bigr) > k+1 = |P \setminus \{p\}|+1.
    \]
    Thus the subsets $\uP$ and $\{q \in P \mid q \to p\}$ of $P \setminus \{p\}$ share at least two elements $p_0$ and $p_1$.
\end{proof}

\begin{theorem}\label{filling}
     Assuming \CrefIneq,
    there exists a unique degenerate $f\in M(P)$ such that $f\delta^p = c_p$ for all $p\in P$.
\end{theorem}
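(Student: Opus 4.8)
The plan is to combine \Cref{FillinguP}, \Cref{prop:ReducingDownPoints}, and \Cref{prop:KnowTwotoKnow} for existence, and then settle uniqueness by a short mass count. Concretely, I would begin with the $M$-structure $f \in M(P)$ supplied by \Cref{FillinguP}: it satisfies $\mass(f) = n+1$ and $f\delta^p = c_p$ for every $p \in \uP$, so the only remaining task is to propagate the identity $f\delta^p = c_p$ to the points $p \in \dP$.

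Fix $p \in \dP$. By \Cref{prop:ReducingDownPoints} there are distinct $p_0, p_1 \in \uP$ that both reduce $c_p$, and these are distinct from $p$ since $p \notin \uP$. As $p_0, p_1 \in \uP$ we already know $f\delta^{p_0} = c_{p_0}$ and $f\delta^{p_1} = c_{p_1}$, while $\mass(f) = n+1$; hence \Cref{prop:KnowTwotoKnow}, applied with $(p, q, r) = (p_0, p_1, p)$, yields $f\delta^p = c_p$. This gives $f\delta^p = c_p$ for all $p \in P$. Moreover $f$ is degenerate: since $d \geq 3 > 0$ we have $\mass(f) = n+1 < n+d+1 = k+1 = |P|$, so by \Cref{lem:CriterionOfEZDecomposition} the EZ-decomposition of $f$ involves a non-invertible surjection.

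For uniqueness, I would let $g \in M(P)$ be an arbitrary degenerate filler and prove $\mass(g) = n+1$, since then $g$ and $f$ both satisfy the hypotheses of the uniqueness clause of \Cref{FillinguP} (in particular $g\delta^p = c_p$ for all $p \in \uP$), forcing $g = f$. That $\mass(g) \geq n+1$ is immediate from $\mass(g) \geq \mass(g\delta^p) = \mass(c_p) = n+1$ for $p \in \uP$ (\Cref{lem:pbdoesntIncreaseMass}). For the reverse inequality, suppose $\mass(g) > n+1$: then for every $p \in P$ we have $\mass(g\delta^p) = \mass(c_p) \leq n+1 < \mass(g)$, so condition (1) of \Cref{lem:EquivalentConditionsForTheReduction} fails for $p$ and $g$; hence, by clause (4) of the same lemma, the $\sim_g$-class of $p$ is a singleton. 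Since this holds for every $p \in P$, the EZ-congruence $\sim_g$ is discrete, so $\mass(g) = |P| = k+1$ and $g$ is non-degenerate --- a contradiction. Therefore $\mass(g) = n+1$, and $g = f$ as desired.

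I do not expect a genuine obstacle at this stage: the real work has already been done in \Cref{FillinguP} (which itself rests on the propagativity of $\uG$, \Cref{prop:ReductionGraphIsPropagative}, hence on the graph-theoretic \Cref{prop:GraphCalculation}). The only points that require a little care are the two uses of the equivalence ``degenerate $\iff \mass < |P|$'' via \Cref{lem:CriterionOfEZDecomposition} --- once to conclude $f$ is degenerate, and once to derive the contradiction when $\mass(g)$ is assumed too large.
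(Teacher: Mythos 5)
Your proposal is correct and follows essentially the same route as the paper: existence via the $f$ of \Cref{FillinguP} propagated to $\dP$ by \Cref{prop:ReducingDownPoints} and \Cref{prop:KnowTwotoKnow}, and uniqueness by showing any degenerate filler has mass $n+1$ and invoking the uniqueness clause of \Cref{FillinguP}. The only cosmetic difference is that you establish $\mass(g)=n+1$ by contradiction (a filler of larger mass would have discrete EZ-congruence, hence be non-degenerate), whereas the paper reads off $\mass(f')=\max_p\mass(c_p)=n+1$ directly from degeneracy --- the same underlying facts from \Cref{lem:EquivalentConditionsForTheReduction} and \Cref{lem:pbdoesntIncreaseMass}.
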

\begin{proof}
    Let $f \in M(P)$ be the $M$-structure of \Cref{FillinguP}.
    We already know that $f\delta^p = c_p$ holds for all $p \in \uP$.
    This property then propagates to all $p \in P$ by \Cref{prop:KnowTwotoKnow} and \Cref{prop:ReducingDownPoints}.
    Thus $f$ is a filler of the cycle $\{c_p\}_{p\in P}$.

    It remains to prove that any degenerate filler $f'$ of $\{c_p\}_{p\in P}$ must coincide with $f$.
    Note that, because of the uniqueness part of \Cref{FillinguP}, it suffices to show that such $f'$ necessarily has mass $n+1$.
    This is indeed the case because $f'$ being degenerate implies 
    \[
    \mass(f') = \max\bigl\{\mass(f'\delta^p) \mid p \in P\bigr\} = \max\bigl\{\mass(c_p) \mid p \in P\bigr\} = n+1.
    \]
    This completes the proof.
\end{proof}


\subsection{The case \texorpdfstring{$n=1$ and $k=3$}{n is one and k is three}}\label{section_exceptional}

\begin{theorem}\label{filling_exception}
    Assuming $n=1$ and $k=3$,
    there exists a unique degenerate $f\in M(P)$ such that $f\delta^p = c_p$ for all $p\in P$.
\end{theorem}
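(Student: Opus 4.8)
The plan is to run the strategy of \Cref{filling} as far as it goes and then patch the one place where it breaks. Since $k=3$ and $n=1$ we have $d = k-n = 2$ and $|P| = k+1 = 4$, and \Cref{prop:basicPropertiesOfReductionGraph} (parts (3) and (4) only require $d \ge 2$) still tells us that every vertex of the reduction graph $G$ has indegree $\ge 2$ and that $\uG$ has at least $3$ vertices; hence $|\dP| = 4 - |\uP| \le 1$. In either case I would first exhibit an edge $p \leftrightarrow q$ of $\uG$ and invoke \Cref{lem:fillingEdge} to obtain the candidate $f \in M(P)$ with $\mass(f) = n+1 = 2$ and $f\delta^p = c_p$, $f\delta^q = c_q$; this $f$ is degenerate because $\mass(f) = 2 < 4 = |P|$. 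Uniqueness is then free: as in the proof of \Cref{filling}, a degenerate filler $f'$ has $\mass(f') = \max\{\mass(f'\delta^r)\mid r \in P\} = \max\{\mass(c_r)\mid r\in P\} = 2$, so $f' = f$ by the uniqueness clause of \Cref{lem:fillingEdge}. It therefore remains, in each of the two cases, to verify that $f$ really is a filler.

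For $|\dP| = 1$, say $\dP = \{s\}$, I expect the original argument to survive essentially verbatim. Indeed $\mass(c_s) = 1$ makes $\sim_{c_s}$ the total relation, so every element of $\uP$ reduces $c_s$; and for each $p \in \uP$ the congruence $\sim_{c_p}$ consists of a $2$-element class and a singleton, the singleton being forced to equal $\{s\}$ by \Cref{prop:basicPropertiesOfReductionGraph}(1), so $\uG$ is the complete graph on the $3$-element set $\uP$. Picking an edge $p \leftrightarrow q$ of $\uG$, the third element $r$ of $\uP$ is reduced by both $p$ and $q$, so \Cref{prop:KnowTwotoKnow} yields $f\delta^r = c_r$; and since $p$ and $q$ also reduce $c_s$, the same proposition, applied with its conclusion about a point of $\dP$ (which it explicitly permits), yields $f\delta^s = c_s$.

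The case $\dP = \emptyset$ is the one needing new input, and I expect it to be the main obstacle. Here $G = \uG$ is a $2$-regular simple graph on $4$ vertices, hence the $4$-cycle $C_4$, which is bipartite and therefore not propagative, so \Cref{prop:ReductionGraphIsPropagative} and the propagation step of \Cref{FillinguP} are both unavailable. Writing the cycle as $0-1-2-3-0$ and applying \Cref{lem:fillingEdge} to the edge $0 \leftrightarrow 1$, the key point is that $\mass(f) = 2$ leaves $\sim_f$ almost no freedom: by \Cref{lem:restrictingcongruence} its restrictions to $\{1,2,3\}$ and to $\{0,2,3\}$ are $\sim_{c_0}$ and $\sim_{c_1}$, which forces the two classes of $\sim_f$ to be the diagonals $\{0,2\}$ and $\{1,3\}$. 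Consequently $2$ and $3$ both reduce $f$; restricting $\sim_f$ gives $\sim_{f\delta^2} = \sim_{c_2}$ and $\sim_{f\delta^3} = \sim_{c_3}$; and I would then conclude $f\delta^2 = c_2$ and $f\delta^3 = c_3$ by applying \Cref{lem:EqualityLiftingOne} with the auxiliary points $1$ and $0$ respectively, whose hypotheses (equal EZ-congruence; the auxiliary point reduces both sides; agreement after one further face) come from the cycle equations $c_2\delta^1 = c_1\delta^2$ and $c_3\delta^0 = c_0\delta^3$.

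In summary, the only genuine work beyond \Cref{filling} lies in the four-vertex situation with $\uG = C_4$, where propagation fails; the saving grace is that with so few vertices the congruence $\sim_f$ is completely pinned down once its mass and two of its faces are prescribed, so the replacement argument through \Cref{lem:EqualityLiftingOne} is short.
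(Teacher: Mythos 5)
Your proof is correct and follows essentially the same route as the paper: seed a candidate $f$ via \Cref{lem:fillingEdge}, propagate with \Cref{prop:KnowTwotoKnow} wherever two reducers from $\uP$ are available, finish the remaining faces with \Cref{lem:EqualityLiftingOne} after pinning down the relevant EZ-congruences, and conclude degeneracy and uniqueness exactly as in \Cref{filling}. The only difference is organisational: you split on $|\dP|\in\{0,1\}$ and determine the reduction graph explicitly ($K_3$ plus a sink, or $C_4$), whereas the paper splits on whether $\sim_f$ has classes of sizes $3{+}1$ or $2{+}2$ --- these are the same dichotomy in substance, and your $C_4$ (diagonal) case is resolved by the same application of \Cref{lem:EqualityLiftingOne} combined with a cycle equation.
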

\begin{proof}
    In this case, $\{c_p\}_{p\in P}$ is a $3$-cycle (so $|P| = 4$) such that $\max\{\mass(c_p)\mid p\in P\} = 2$.
    Using the fact $\uP \neq \emptyset$ and \Cref{prop:basicPropertiesOfReductionGraph}, we can pick an edge $p \leftrightarrow q$ in $\uG$.
    By \Cref{lem:fillingEdge}, there exists a unique $f\in M(P)$ such that $f\delta^p =c_p$, $f\delta^q = c_q$, and $\mass(f)=2$.
    We will prove that this $f$ is a filler of the given cycle.
    Note that $p,q \in \uP$ implies
    \[
    \mass(f\delta^p) = \mass(c_p) = 2 = \mass(f)
    \]
    and similarly $\mass(f\delta^q) = \mass(f)$, so both $p$ and $q$ reduce $f$.

    Since $\mass(f) = 2$, there are exactly two $\sim_f$-classes $A,B\subset P$.
    Without loss of generality, we may assume $|A| \ge |B|$.
    Then we have either $|A| = 3$ and $|B| = 1$, or $|A| = |B| = 2$.
    We will treat these two cases separately. 

    Suppose $|A| = 3$ and $|B| = 1$.
    We know that both $p$ and $q$ reduce $f$, or equivalently, neither $p$ nor $q$ forms a singleton $\sim_f$-class, which implies $p,q \in A$.
    We first prove that $f\delta^r = c_r$ holds for the last element $r \in A$.
    Since we have $q \sim_{f\delta^p} r$ by \Cref{lem:laxCongruence}, or equivalently $q \sim_{c_p} r$, this element $r$ reduces $c_p$.
    By \Cref{prop:basicPropertiesOfReductionGraph}(1) and (2), it follows that $p$ reduces $c_r$, and similarly $q$ reduces $c_r$.
    So we can apply \Cref{prop:KnowTwotoKnow} to obtain the desired equation $f\delta^r = c_r$.
    It remains to prove $f\delta^s = c_s$ for the unique element $s \in B$.
    Since the indegree of $s$ in $G$ is at least $d=2$ by \Cref{prop:basicPropertiesOfReductionGraph}(3), this follows similarly from \Cref{prop:KnowTwotoKnow}.
    
    Now we treat the case $|A| = |B| = 2$.
    Without loss of generality, we may assume $p \in A$.
    Then $\sim_{c_p}~=~\sim_{f\delta^p}$ partitions $P \setminus \{p\}$ into $A \setminus \{p\}$ and $B$.
    Since $q$ reduces $c_p$, it follows that we must have $q \in B$.
    Let $r,s \in P$ be the remaining elements so that $A = \{p,r\}$ and $B = \{q,s\}$.
    We will only prove that $f\delta^s = c_s$ holds; the equation $f\delta^r = c_r$ can be proved similarly.

    We will apply \Cref{lem:EqualityLiftingOne} to $A = P \setminus \{s\}$, $a = p$, $x=f\delta^s$, and $y = c_s$.
    We can see from the above description of $\sim_{c_p}$ that $s$ reduces $c_p$, which implies $\mass(c_s) = 2$ and moreover $p$ reduces $c_s$ by \Cref{prop:basicPropertiesOfReductionGraph}(1) and (2) respectively.
    We also have
    \[
    (f\delta^s)\delta^p = (f\delta^p)\delta^s = c_p\delta^s = c_s\delta^p.
    \]
    Therefore it remains to prove that $f\delta^s$ and $c_s$ have the same EZ-congruence.

    Since $s$ reduces $c_p = f\delta^p$, we have
    \[
    \mass(f\delta^s) \ge \mass(f\delta^s\delta^p) = \mass(f\delta^p\delta^s) = \mass(f\delta^p) = \mass(f).
    \]
    Thus $s$ reduces $f$, and it follows by \Cref{lem:restrictingcongruence} that $\sim_{f\delta^s}$ is the restriction of $\sim_f$ onto $P \setminus \{s\}$.
    To prove that this coincides with $\sim_{c_s}$, it suffices to exhibit that $\{q\}$ is a singleton $\sim_{c_s}$-class, or equivalently that $q$ does not reduce $c_s$.
    The latter is indeed the case because the above description of $\sim_f$ implies that
    \[
    c_s\delta^q = c_q\delta^s = f\delta^q\delta^s
    \]
    cannot have mass $2$.

    It remains to prove the uniqueness part.
    This can be done similarly to the proof of \Cref{filling}, by first observing that the mass of any degenerate filler must be $2$, and then combining this observation with the uniqueness part of \Cref{lem:fillingEdge}.
    This completes the proof.
\end{proof}

\subsection{Conclusion}
\begin{theorem}[Main theorem] \label{thm:MainTheorem}
    The Aufhebung of the level labelled by $l$ is given by
    \[
    a_l =
    \begin{cases}
        0 &(l= -\infty)\\
        1&(l= 0)\\
        2&(l= 1)\\
        4&(l= 2)\\
        2l-1&(l\geq 3).
    \end{cases}
    \]
\end{theorem}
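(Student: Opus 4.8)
The plan is to obtain a coincident upper bound to match the lower bound of \Cref{prop:LowerBound}, so that the two together pin down $a_l$ exactly. The cases $l=-\infty$ and $l=0$ are already exactly \Cref{prop:AufhebungMinusInfty} and \Cref{prop:AufhebungZero}, so we may assume $l\ge 1$; writing $b_l$ for the claimed value ($2$ if $l=1$, $4$ if $l=2$, and $2l-1$ if $l\ge 3$), \Cref{prop:LowerBound} already gives $a_l\ge b_l$. It therefore remains to prove $a_l\le b_l$, i.e.\ that every $l$-skeletal symmetric set is $b_l$-coskeletal.

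To this end, fix an $l$-skeletal symmetric set $M$ and an integer $k>b_l$; by \Cref{prop:CharacterizationOfCoskeletality} we must show that every $k$-cycle in $M$ has a unique filler. Fix such a cycle and adopt \Cref{notation:TheCycleNotation}. By \Cref{prop:characterizationOfSkeletality}, the $l$-skeletality of $M$ gives $\mass(c_p)\le l+1$ for all $p$, whence $n\le l$; and since $b_l\ge l$ for every $l\ge 1$, we have $k>l$, so any filler $f\in M(P)$ satisfies $\mass(f)\le l+1<k+1=|P|$ and is therefore degenerate. Consequently a unique filler is the same thing as a unique degenerate filler, and \Cref{filling} and \Cref{filling_exception} supply exactly the latter --- once we check that the cycle falls under one of them.

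This last check is purely numerical. First, $k>2n-1$ always holds, since $n\le l$ gives $2n-1\le 2l-1\le b_l<k$. It then remains to secure the hypothesis $d\ge 3$ of \CrefIneq, i.e.\ $k\ge n+3$. If $l\ge 2$ then $b_l\ge l+2$, so $k\ge b_l+1\ge l+3\ge n+3$, and moreover $k\ge b_l+1\ge 5$ rules out $(n,k)=(1,3)$; hence \Cref{filling} applies. If $l=1$ then $n\le 1$ and $k\ge 3$: the subcase $n=1$, $k=3$ is precisely the one treated by \Cref{filling_exception}, and otherwise either $n=0$, so $d=k\ge 3$, or $k\ge 4$, so $d=k-n\ge 3$, with $(n,k)\ne(1,3)$, so \Cref{filling} applies again. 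In every case we obtain a unique degenerate, hence unique, filler, so $M$ is $b_l$-coskeletal; combined with \Cref{prop:LowerBound} this yields the stated formula.

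Almost all of the work has already been carried out in \Cref{filling} and \Cref{filling_exception} (which in turn rest on the propagativity results of \Cref{subsection:PropagativeGraphs}); what is above is little more than bookkeeping. The one subtle point I expect to need care is that the estimate ``$b_l\ge l+2$'', which is what makes the condition $d\ge 3$ come for free, barely fails at $l=1$ --- and that is exactly why the corner case $(n,k)=(1,3)$ has to be split off and handled on its own in \Cref{filling_exception}.
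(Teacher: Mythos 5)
Your proposal is correct and follows essentially the same route as the paper's own proof: lower bound from \Cref{prop:LowerBound}, reduction to unique fillers of $k$-cycles via \Cref{prop:CharacterizationOfCoskeletality}, the observation that $l$-skeletality forces $n\le l$ and makes every filler degenerate, and then \Cref{filling} together with \Cref{filling_exception}. The only difference is that you spell out the numerical verification of \CrefIneq{} and the isolation of the corner case $(n,k)=(1,3)$ in more detail than the paper does, which is harmless.
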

\begin{proof}
    The first two values $a_{-\infty}, a_0$ are already established in \Cref{prop:AufhebungMinusInfty} and \Cref{prop:AufhebungZero}.
    So fix $l \ge 1$ and let $M$ be an $l$-skeletal symmetric set.
    We will use \Cref{prop:CharacterizationOfCoskeletality} to prove that $M$ is $a_l$-coskeletal for the value of $a_l$ as stated.
    
    Fix
    \[
    k >
    \begin{cases}
        2&(l= 1)\\
        4&(l= 2)\\
        2l-1&(l\geq 3),
    \end{cases}
    \]
    and take an arbitrary $k$-cycle $\{c_p\}_{p\in P}$ in $M$.
    Defining $n$ as in \Cref{notation:TheCycleNotation}, either we are in the case $(n,k)=(1,3)$, or we obtain {\CrefIneq} since the $l$-skeletality of $M$ implies $n\leq l$. 
    Thus the $k$-cycle $\{c_p\}_{p\in P}$ admits a unique degenerate filler by \Cref{filling} and \Cref{filling_exception}.
    In fact, removing the degeneracy assumption does not spoil the uniqueness because the $l$-skeletality forces every filler to be degenerate. 
    
    Combining this observation with the lower bound given in \Cref{prop:LowerBound}, we obtain the Aufhebung relation as stated.
\end{proof}
\bibliographystyle{alpha}
\bibliography{CommonBiblio}

\end{document}